\documentclass[11pt]{article}
\usepackage[colorlinks=true,linkcolor=blue,citecolor=blue,urlcolor=blue]{hyperref}
\usepackage{amsmath}
\usepackage{amsthm}
\usepackage{amssymb}
\usepackage{amsfonts}
\usepackage{graphicx}
\usepackage[margin=1in]{geometry}
\newtheorem{theorem}{Theorem}[section]
\newtheorem{lemma}[theorem]{Lemma}
\newtheorem{proposition}[theorem]{Proposition}
\newtheorem{corollary}[theorem]{Corollary}
\newtheorem{definition}[theorem]{Definition}
\newtheorem{remark}[theorem]{Remark}
\numberwithin{equation}{section}

\newcommand{\p}{\partial}
\newcommand{\ep}{\varepsilon}
\newcommand{\R}{\mathbb{R}}
\newcommand{\bOmega}{\Omega}
\newcommand{\ip}[1]{\left\langle #1\right\rangle}

\begin{document}

\title{Slow uniform rotation reduces the critical adiabatic exponent for the
stability of gaseous stars}

\author{Yucong Wang\\[2pt]
School of Mathematics and Computational Science,\\
Xiangtan University, Xiangtan, Hunan 411105, China\\[2pt]
\texttt{yucongwang666@163.com}}

\date{}
\maketitle

\begin{abstract}
We prove that any sufficiently slow uniform rotation stabilizes the supermassive
star under axi-symmetric perturbations, and the stability persists for every
adiabatic exponent in a left neighborhood of the mass-critical one. The key new
ingredient of the linear analysis is a structural property of the self-similar
family inherited from the mass-critical scaling: the associated self-similar
direction becomes a reduced kernel direction of a modified linearized operator.
\end{abstract}

\section{Introduction}
\label{sec:intro}

We consider the self-gravitational compressible Euler--Poisson system
\begin{equation}
\label{EP}
\begin{cases}
\p_t\rho+\operatorname{div}(\rho v)=0,\\[2pt]
\rho(\p_t v+v\cdot\nabla v)+\nabla P(\rho)=-\rho\nabla V,\\[2pt]
\Delta V=4\pi\rho,\qquad \lim\limits_{|x|\to\infty}V(t,x)=0,
\end{cases}
\end{equation}
where $x=(x_1,x_2,x_3)\in\R^3$, $t>0$, $\rho\ge 0$ is the density, $P(\rho)$ the
pressure, $V$ the gravitational potential and $v\in\R^3$ the velocity field.
This paper concerns the gaseous star modeled by the mass-critical polytropic law
\begin{equation}
\label{P1}
P(\rho)=A\rho^{\gamma},\qquad A>0, \qquad \gamma\in(6/5,2),
\end{equation}
see \cite{ST2004}. Within the polytropic family $P(\rho)=A\rho^{\gamma}$ the exponent
$\gamma=4/3$ is critical: the total mass of a non-rotating star is independent of its
central density (the Chandrasekhar limit \cite{CS1939}) and the equilibrium family is
scale-invariant.

We use cylindrical coordinates
\begin{equation}
x_1=r\sin\theta,\qquad x_2=r\cos\theta,\qquad x_3=z,
\end{equation}
and consider axi-symmetric uniformly rotating stars of the form
\begin{equation}
\label{ansatz}
(\rho_0,v_0)=(\rho_{\mu,\ep}(r,z),\ \ep\,r\,\mathbf{e}_\theta),
\end{equation}
where $\ep\ge 0$ is the (constant) angular velocity, $\mu=\rho_{\mu,\ep}(0,0)$ is the
central density, and $(\mathbf{e}_r,\mathbf{e}_\theta,\mathbf{e}_z)$ are the unit
vectors in the $r,\theta,z$ directions. With the enthalpy
\begin{equation}
\Phi'(\rho)=\int_0^\rho \frac{P'(s)}{s}\,ds=\frac{\gamma}{\gamma-1}A\rho^{\gamma-1},
\end{equation}
the Euler--Poisson system \eqref{EP} under the ansatz \eqref{ansatz} reduces, in the
support $\bOmega_{\mu,\ep}:=\{(r,z)\in\R^2\,:\,\rho_{\mu,\ep}(r,z)>0,\ r\ge 0\}$ of
the star, to the steady equation
\begin{equation}
\label{Steady}
-\frac{1}{2}\ep^2 r^2+\Phi'(\rho_{\mu,\ep}(r,z))+V_{\mu,\ep}(r,z)+c_{\mu,\ep}=0,
\end{equation}
where
\begin{equation}
V_{\mu,\ep}=-|x|^{-1}\ast\rho_{\mu,\ep},\qquad
c_{\mu,\ep}=-V_{\mu,\ep}(0,Z_{\mu,\ep}),
\end{equation}
and $(0,\pm Z_{\mu,\ep})\in\p\bOmega_{\mu,\ep}$ are the north and south poles of the
rotating star.

The existence of slowly uniformly rotating stars near the non-rotating Lane--Emden
stars was established by Lichtenstein \cite{L1933}, Heilig \cite{H1994}, and
Jang--Makino \cite{JM2017,JM2019}; for the general framework of admissible solutions
(which guarantees, in particular, that the boundary $\p\bOmega_{\mu,\ep}$ is smooth
with positive curvature near the equator, and that $\rho_{\mu,\ep}$ vanishes like a
power of the distance to the boundary, cf. \eqref{bdry} below) we refer to
\cite{JM2019}. The precise existence statement we use is recalled in Section
\ref{sec:steady}.

The linear stability of rotating gaseous stars was studied rigorously by
Lin--Wang \cite{LW2023}. For axi-symmetric rotating stars with \emph{Rayleigh stable}
angular velocity (i.e. with positive Rayleigh discriminant
$\Upsilon(r)=\partial_r(\omega_0(r)^2 r^4)/r^3>0$), they proved a sharp stability
criterion: the number of unstable modes of the linearized Euler--Poisson system equals
the number of negative modes of a reduced quadratic form $\ip{K\delta\rho,\delta\rho}$
restricted to the space of mass-preserving density perturbations
$R(B_1)=\{\delta\rho\,:\,\int_{\bOmega}\delta\rho\,dx=0\}$, cf. Theorem
\ref{thm:criterion} below. In particular, the star is spectrally stable if and only if
$K|_{R(B_1)}\ge 0$, and then the linearized semigroup satisfies the polynomial bound
$|e^{tJL}|\le C(1+|t|)^3$.

The stability of \emph{non-rotating} polytropic stars ($\ep=0$) is governed by the
classical turning point principle (see \cite{LZ2022} and references therein): the
stability of the family $\rho_{\mu,0}$ changes exactly at the extrema of the total
mass $M_{\mu,0}$. For $\gamma\in(6/5,2)$, $\gamma\ne 4/3$, the mass $M_{\mu,0}$ is
increasing in $\mu$ when $\gamma>4/3$ and decreasing when $\gamma<4/3$; accordingly
the non-rotating stars are stable for $\gamma>4/3$ and unstable for $\gamma<4/3$.
The critical case $\gamma=4/3$ is degenerate: $M_{\mu,0}\equiv M_c$ (the
Chandrasekhar limit) is independent of $\mu$, the scale-invariance generator
$\partial_\mu\rho_{\mu,0}$ lies in the kernel of the linearized operator, and the
non-rotating supermassive stars are only \emph{marginally} stable. Since
$\partial_\mu M_{\mu,\ep}$ has no definite sign a priori at $\gamma=4/3$, the
standard sufficient condition for the stability of rotating stars (cf.
\cite[Theorem 3.3]{LW2023}, which requires $\partial_\mu M_{\mu,\ep}\ge 0$) does not
apply, and the behavior of the rotating family at $\gamma=4/3$ was left open.

On the other hand, numerical and formal computations indicate that rotation
enlarges the stability range of polytropic stars beyond the critical exponent. As
recalled in \cite[Example 2 and Remark 3.3]{LW2023}, for rotating polytropes with
fixed angular momentum distribution and $\gamma=4.03/3.03<4/3$, numerical
computations show that the total mass $M(\mu)$ acquires a minimum point, beyond
which the rotating stars become stable although every non-rotating star is
unstable; moreover, the formal computations of Ledoux \cite{Ledoux1945} and of
Chandrasekhar--Lebovitz \cite{ChandraLebovitz68}  indicate that for small uniform rotation the critical index
for the onset of instability is reduced from $4/3$ to
$\gamma^{*}=\frac43-\frac{2\omega^2 I}{9|W|}$, where $I>0$ is the moment of inertia
about the center of mass and $W$ the gravitational potential energy. In other
words, a small rotation is expected to lower the stability index below $4/3$, and
hence to stabilize not only the marginally stable supermassive star but also
rotating polytropes whose adiabatic exponent is slightly below $4/3$. A rigorous
mathematical proof of this stabilizing effect, however, was missing.

The purpose of this paper is to provide such a proof: an arbitrarily small uniform
rotation removes the degeneracy at $\gamma=4/3$ and stabilizes the marginally
stable supermassive family, and the stability persists for rotating polytropes
whose adiabatic exponent lies in a left neighborhood of $4/3$. Our linear analysis
rests on a new observation concerning the special structure of the self-similar
family inherited from the mass-critical scaling: the self-similar direction, which
is the neutral direction of the non-rotating supermassive star, becomes a reduced
kernel direction of a modified linearized operator once the rotation is turned on
(see the mechanism (1)--(5) below). This viewpoint does not rely on the turning
point principle and applies precisely at the critical exponent where the latter
degenerates. Our main result is the following.

\begin{theorem}[Main theorem]
\label{thm:main}
Let $0<\mu_0<\mu_1<+\infty$. Then there exists $\ep_0=\ep_0(\mu_0,\mu_1)>0$ such
that for every $\ep\in(0,\ep_0)$ and every $\mu\in[\mu_0,\mu_1]$ there exists
$\ep_1=\ep_1(\mu_0,\mu_1,\ep)>0$ with the following property: for every
$\gamma\in(4/3-\ep_1,4/3]$, the slowly uniformly rotating polytropic star solution
$(\rho^{\gamma}_{\mu,\ep},\ep r\mathbf{e}_\theta)$ of \eqref{SteadyGamma} (for
$\gamma=4/3$ this is the supermassive star \eqref{Steady}) is \emph{spectrally
stable} with respect to axi-symmetric perturbations. More precisely, the
corresponding linearized Euler--Poisson operator
$\mathbf{J}^{\gamma}_{\mu,\ep}\mathbf{L}^{\gamma}_{\mu,\ep}$ (defined as in
\eqref{defJL} with $\rho^{\gamma}_{\mu,\ep}$ in place of $\rho_{\mu,\ep}$)
satisfies
\begin{equation}
\label{spec}
\sigma\big(\mathbf{J}^{\gamma}_{\mu,\ep}\mathbf{L}^{\gamma}_{\mu,\ep}\big)\subset i\R,
\end{equation}
and there exists $C>0$, independent of $\gamma\in(4/3-\ep_1,4/3]$ and
$\mu\in[\mu_0,\mu_1]$, such that
\begin{equation}
\label{estimate-center}
\left|e^{t\mathbf{J}^{\gamma}_{\mu,\ep}\mathbf{L}^{\gamma}_{\mu,\ep}}\right|
\le C(1+|t|)^3,\qquad \forall\,t\in\R.
\end{equation}
\end{theorem}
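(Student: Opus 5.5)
By the Lin--Wang criterion (Theorem \ref{thm:criterion}), applicable since uniform rotation $\omega_0=\ep$ has Rayleigh discriminant $\Upsilon=4\ep^2>0$, it suffices to show that the reduced quadratic form $\ip{K^\gamma_{\mu,\ep}\delta\rho,\delta\rho}$ is nonnegative on $R(B_1)=\{\int\delta\rho=0\}$. Then \eqref{spec} and \eqref{estimate-center} follow from the same theorem, with a constant uniform in $\gamma$ and $\mu$ because all data depend continuously on $(\gamma,\mu)$ on a compact set. Here $K=\Phi''(\rho_0)-|x|^{-1}\ast\cdot+(\text{rotational term})$, where the rotational term is a nonnegative rank-type correction coming from conservation of angular momentum. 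I would count negative directions of $K$ on $R(B_1)$ in three steps: the non-rotating critical case, then $\gamma=4/3$ with $\ep>0$ small, then $\gamma$ slightly below $4/3$.

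\emph{Step 1 (non-rotating, $\gamma=4/3$).} For $\ep=0$, the operator $L_0=\Phi''(\rho_{\mu,0})-|x|^{-1}\ast$ on radial functions has exactly one negative eigenvalue. Its kernel within the radial class is spanned by $\partial_\mu\rho_{\mu,0}$, which lies in $R(B_1)$ since $M_{\mu,0}\equiv M_c$. On $R(B_1)$ one has $L_0\ge0$, with kernel spanned by $\partial_\mu\rho_{\mu,0}$, and there is a spectral gap on the complement (turning-point principle at the degenerate point). The non-radial axisymmetric sectors, with angular index $\ell\ge1$ in spherical harmonics, are strictly positive except for translations, and axisymmetry leaves only the $z$-translation. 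That direction is handled by the momentum constraint, or equivalently it remains a kernel direction $\partial_z\rho_0$ for all $\ep$ by symmetry.

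\emph{Step 2 (key step, $\gamma=4/3$, $\ep>0$).} By perturbation, $K_{\mu,\ep}|_{R(B_1)}$ has at most one eigenvalue near $0$ that is not protected by symmetry, with eigenvector close to $\partial_\mu\rho_{\mu,0}$. Its sign must be determined. Here I would use the mass-critical scaling. If $\rho_{\mu,\ep}$ solves \eqref{Steady}, then $\lambda^3\rho_{\mu,\ep}(\lambda x)$ solves it with angular velocity $\lambda^{3/2}\ep$ and the same mass. Differentiating in $\lambda$ gives a self-similar direction $\xi_{\mu,\ep}=3\rho+x\cdot\nabla\rho$ with $\int\xi_{\mu,\ep}=0$. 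The linearized steady equation then yields $L_{\mu,\ep}\xi=\frac32\ep^2r^2+\text{const}$, so $\xi$ lies in the kernel of the modified operator $L_{\mu,\ep}-\ep^2(\cdot)$ obtained by absorbing the centrifugal term. This is the ``reduced kernel direction'' of the abstract.

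I would then compute
\[
\ip{K\xi,\xi}=\ip{K\xi,\xi}-\ip{(L-\tfrac32\ep^2 r^2\text{-term})\xi,\xi},
\]
which leaves only the rotational part, namely $c\,\ep^2\int r^2\xi\,dx$ together with the angular-momentum projection. Integrating by parts, $\int r^2\xi\,dx=\int r^2(3\rho+x\cdot\nabla\rho)\,dx=(3-5)I=-2I$ up to sign conventions. Combined with the angular-momentum correction, this should give $\ip{K\xi,\xi}=c\,\ep^2 I+O(\ep^4)>0$, which matches the Ledoux shift $\frac{2\omega^2I}{9|W|}$. A min-max argument then shows that the near-zero eigenvalue is of order $+\ep^2$, since any $\delta\rho\in R(B_1)$ decomposes as $a\xi+\eta$ with $\eta$ in the gapped complement, and the cross terms are $O(\ep^2)\|\eta\|$ but controlled because $\xi$ is nearly in the kernel. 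The main obstacle is exactly here. The relevant quantities are all $O(\ep^2)$, so a careful expansion is needed. In particular I need the cross terms $\ip{K\xi,\eta}$ to be $O(\ep^2)$ and not larger, which requires $\rho_{\mu,\ep}-\rho_{\mu,0}=O(\ep^2)$ in a norm compatible with the singular weight $\Phi''(\rho)$ near the free boundary, using the admissible-solution boundary behavior \eqref{bdry}. This yields $K|_{R(B_1)}\ge0$, with a lower bound $c\ep^2$ on the nonsymmetric part, uniformly for $\mu\in[\mu_0,\mu_1]$.

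\emph{Step 3 ($\gamma<4/3$).} Fix $\ep$. By continuity of $\rho^\gamma_{\mu,\ep}$ in $\gamma$, which holds for the existence theory recalled in Section \ref{sec:steady}, the operator $K^\gamma$ differs from $K^{4/3}$ by $o(1)$ as $\gamma\to4/3$ in the relative form sense. The eigenvalue of size $c\ep^2$ therefore persists for $\gamma\in(4/3-\ep_1,4/3]$, giving $\ep_1$ depending on $\ep$. The translation direction stays in the kernel by symmetry, so it causes no sign change.
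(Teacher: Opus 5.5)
Your overall architecture is the right one: the Lin--Wang criterion, the self-similar direction $\xi=3\rho+x\cdot\nabla\rho$, positivity of $K$ on $\xi$ plus a gap elsewhere, then perturbation in $\gamma$. But Step 2 stops exactly at the inequality that decides the theorem.

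You correctly get $L_{\mu,\ep}\xi=-c_{\mu,\ep}+\frac32\ep^2r^2$ and $\int r^2\xi\,dx=-2I$. Hence $\ip{L_{\mu,\ep}\xi,\xi}=-3\ep^2I<0$: the centrifugal term turns the neutral direction into a negative one. Stability then depends entirely on whether the rotational term $8\pi\ep^2\ip{T\xi,\xi}$ exceeds $3\ep^2I$, where $\ip{T\delta\rho,\delta\rho}=\int_0^{R}F_{\delta\rho}(r)^2\big/\big(r\int_{\R}\rho_{\mu,\ep}\,dz\big)\,dr$. You never compute this term. Saying it ``should give $c\,\ep^2I+O(\ep^4)>0$'' by analogy with Ledoux is an assertion, not a proof. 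Had the rotational contribution been smaller than $3\ep^2I$, $\xi$ itself would be a negative direction of $K$ on $R(B_1^{\mu,\ep})$ and the star would be unstable.

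The missing idea is that the cylindrical mass function of $\xi$ is explicit. The scaling gives $m_{\rho_\lambda}(r)=m_{\rho}(\lambda r)$, hence $F_\xi(r)=r^2\int_{\R}\rho_{\mu,\ep}(r,z)\,dz$. From this:
\begin{itemize}
\item[(a)] $\ip{T\xi,\xi}=\int_0^R r^3\int_{\R}\rho_{\mu,\ep}\,dz\,dr=I/(2\pi)$, so $\ip{K\xi,\xi}=-3\ep^2I+4\ep^2I=\ep^2I$ exactly, with no $O(\ep^4)$ error.
\item[(b)] Integrating by parts with $F_\eta(0)=F_\eta(R)=0$ gives $\ip{T\xi,\eta}=-\frac1{4\pi}\int r^2\eta\,dx$ for every mass-preserving $\eta$.
\item[(c)] Consequently $\ip{K\xi,\eta}=-\frac12\ep^2\int r^2\eta\,dx$ exactly.
\end{itemize}

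These identities also change what is actually hard. The obstacle you single out, an $O(\ep^2)$ expansion of $\rho_{\mu,\ep}-\rho_{\mu,0}$ in the singular weighted norm, is not needed. The cross term is bounded outright by $\frac12\ep^2\|r^2/\Phi''(\rho_{\mu,\ep})\|_{X_1}\|\eta\|_{X_1}$. What your discriminant argument still requires is a gap on the complement of $\xi$ in $R(B_1^{\mu,\ep})$ that is uniform in $\ep$. That in turn needs a perturbation argument from $\ep=0$ on moving domains.

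The paper avoids this at $\gamma=4/3$. The identities say $\xi$ is a reduced kernel direction of $G=L+6\pi\ep^2T$ on $R(B_1^{\mu,\ep})$. So $\ip{G(a\xi+\eta),a\xi+\eta}=\ip{G\eta,\eta}\ge\ip{L\eta,\eta}\ge0$ for $\eta$ in the complement where $L\ge0$, and $K=G+2\pi\ep^2T\ge G$. This uses only the negative-mode structure of Proposition \ref{prop:decomp}. Your quantitative decomposition is essentially what the paper does afterwards in Theorem \ref{thm:strict}, to get the strict positivity needed for $\gamma<4/3$.

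Two smaller points.
\begin{itemize}
\item[(1)] In Step 1, $\partial_\mu\rho_{\mu,0}\notin\ker L_{\mu,0}$, since $L_{\mu,0}\partial_\mu\rho_{\mu,0}=-\partial_\mu c_{\mu,0}\neq0$. It is only a reduced kernel direction on $R(B_1^{\mu,0})$.
\item[(2)] In Step 3, continuity in $\gamma$ is not supplied by the $\gamma=4/3$ existence theory. Also, ``$o(1)$ in the relative form sense'' presupposes identifying the spaces $L^2_{\Phi''_\gamma(\rho^\gamma_{\mu,\ep})}$, whose weights and supports vary with $\gamma$. The paper needs an enthalpy implicit-function argument, a transport to a fixed domain, and boundary-layer estimates to do this.
\end{itemize}
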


The case $\gamma=4/3$ is proved in Section \ref{sec:proof}, and the stability for
$\gamma\in(4/3-\ep_1,4/3)$ follows from the continuity of the rotating star family
in the adiabatic exponent, see Section \ref{sec:gamma} (restated there as Theorem
\ref{thm:gamstab}).

\begin{remark}
\label{rem:marginally}
The striking point is that the non-rotating supermassive stars are only marginally
stable, and \emph{any} rotation with a positive Rayleigh discriminant makes the family
strictly stable in the sense that the reduced operator has no negative modes
(Theorem \ref{thm:reduced}). In particular, the stabilizing effect of rotation does
\emph{not} rely on the monotonicity of the mass $M_{\mu,\ep}$ with respect to $\mu$;
this is in sharp contrast with the fixed angular momentum distribution case studied in
\cite{LW2023}, where the stability transition is governed by the sign of
$\partial_\mu M_{\mu,\ep}$ (turning point principle, cf. Section \ref{sec:discussion}).
\end{remark}

The key mechanism of the proof is as follows.
\begin{enumerate}
\item By \cite[Theorem 1.1]{LW2023}, spectral stability is equivalent to
$n^-(K_{\mu,\ep}|_{R(B_1^{\mu,\ep})})=0$, where $K_{\mu,\ep}$ is the reduced operator
$K_{\mu,\ep}=L_{\mu,\ep}+8\pi\ep^2T_{\mu,\ep}$ (see \eqref{defKuniform}--\eqref{defT}),
$L_{\mu,\ep}=\Phi''(\rho_{\mu,\ep})-4\pi(-\Delta)^{-1}$ being the linearized operator
of the steady equation \eqref{Steady}, and $T_{\mu,\ep}\ge 0$ the positive quadratic
form coming from the rotation. For uniform rotation the Rayleigh discriminant is the
positive constant $\Upsilon(r)\equiv 4\ep^2>0$.
\item The mass-critical nature of $\gamma=4/3$ produces a distinguished
mass-preserving direction $f_{\mu,\ep}=\partial_\lambda\rho_\lambda|_{\lambda=1}\in
R(B_1^{\mu,\ep})$ (the self-similar direction, see Section \ref{sec:selfsim}). Its
cylindrical mass function is explicit, $F_{f_{\mu,\ep}}(r)=r^2\int_{\R}
\rho_{\mu,\ep}\,dz$, and one computes $\ip{L_{\mu,\ep}f_{\mu,\ep},f_{\mu,\ep}}
=-3\ep^2I_{\mu,\ep}<0$: the rotation turns the neutral mode of the non-rotating
supermassive star into a weak negative mode of $L_{\mu,\ep}$ of size $O(\ep^2)$.
Thus $n^-(L_{\mu,\ep}|_{R(B_1^{\mu,\ep})})\ge 1$ for $\ep>0$, and the naive argument
$K_{\mu,\ep}\ge L_{\mu,\ep}$ is not sufficient.
\item We introduce the modified operator $G_{\mu,\ep}:=L_{\mu,\ep}+6\pi\ep^2T_{\mu,\ep}$
(so that $K_{\mu,\ep}-G_{\mu,\ep}=2\pi\ep^2T_{\mu,\ep}\ge 0$). A direct computation
using $F_{f_{\mu,\ep}}(r)=r^2\int_{\R}\rho_{\mu,\ep}\,dz$ shows that
$$
\ip{G_{\mu,\ep}f_{\mu,\ep},\delta\rho}=0\qquad\forall\,\delta\rho\in
R(B_1^{\mu,\ep}),
$$
i.e. $f_{\mu,\ep}$ is a (reduced) kernel direction of $G_{\mu,\ep}$ on the
mass-preserving subspace. Since $X^{\mu,\ep}_{ev}=\operatorname{span}\{f_{\mu,\ep}\}
\oplus X^{\mu,\ep}_{+,ev}$ with $L_{\mu,\ep}|_{X^{\mu,\ep}_{+,ev}}\ge\delta>0$,
and $G_{\mu,\ep}=L_{\mu,\ep}\ge 0$ on $X^{\mu,\ep}_{od}$, it follows that
$n^-(G_{\mu,\ep}|_{R(B_1^{\mu,\ep})})=0$.
\item By (3), $n^-(K_{\mu,\ep}|_{R(B_1^{\mu,\ep})})\le n^-(G_{\mu,\ep}|_{R(B_1^{\mu,\ep})})
=0$, so $n^-(K_{\mu,\ep}|_{R(B_1^{\mu,\ep})})=0$, which by (1) proves the spectral
stability at $\gamma=4/3$ and the polynomial bound \eqref{estimate-center} via the
exponential trichotomy of \cite{LW2023}.
\item For the extension to $\gamma\in(4/3-\ep_1,4/3)$, we combine the strict
positivity of the reduced operator away
from the translation mode (Theorem \ref{thm:strict}) with the continuity of the
rotating star family in the adiabatic exponent $\gamma$ (Lemma \ref{lem:gammacont})
and the uniform convergence of the reduced operators (Proposition
\ref{prop:Kconv}): the positivity survives for all $\gamma$ in a left neighborhood
of $4/3$, and the Lin--Wang criterion applies again.
\end{enumerate}

The paper is organized as follows. In Section \ref{sec:steady} we collect the
preliminaries: admissible solutions, the existence theorem, the self-similar family
(which is crucial for the non-rotating limit and is inherited from the mass-critical
nature of \eqref{P1}), and the functional setting. In Section \ref{sec:reduced} we
recall the sharp stability criterion of \cite{LW2023} and prove the key negative-mode
computations. In Section \ref{sec:proof} we prove Theorem \ref{thm:main} at the
critical exponent $\gamma=4/3$. In Section
\ref{sec:gamma} we prove two further results: the strict positivity of the reduced
operator $K_{\mu,\ep}$ on the mass-preserving subspace away from the translation mode
(Theorem \ref{thm:strict}), and, using the continuity of the rotating star family in
the adiabatic exponent $\gamma$ (Lemma \ref{lem:gammacont}) together with the uniform
convergence of the reduced operators, the spectral stability for $\gamma$ in a left
neighborhood of $4/3$ (the full statement of Theorem \ref{thm:main}, restated as
Theorem \ref{thm:gamstab}). Finally, in Section
\ref{sec:discussion} we discuss the cases $\gamma\ne 4/3$, the relation with the
uniqueness theorem of \cite{W2024}, and the turning point principle.

\section{Steady states, self-similar family and the functional setting}
\label{sec:steady}

\subsection{Admissible solutions and existence}
\label{sec:admissible}

Let $R=\sqrt{x_1^2+x_2^2+x_3^2}$ and $\zeta=x_3/R$. We recall the definition of an
admissible rotating star solution of \cite{JM2019} adapted to uniform rotation.

\begin{definition}
\label{def:admissible}
For $\gamma=4/3$, a solution $\rho_{\mu,\ep}$ of \eqref{Steady} is called an
\emph{admissible solution} with parameters $(\mu,\ep)$ if the following conditions hold:
\begin{itemize}
\item[(I)] $\partial_R\rho_{\mu,\ep}<0$ for $R_0^{\mu,\ep}\le R\le R_\infty^{\mu,\ep}$,
$|\zeta|\le 1$, where $R_0^{\mu,\ep}>0$ is a small positive number;
\item[(II)] there exists a continuous function $R^{\mu,\ep}(\zeta)$ of $|\zeta|\le 1$
such that $R_0^{\mu,\ep}\le R^{\mu,\ep}(\zeta)\le R_\infty^{\mu,\ep}$ for all
$\zeta\in[-1,1]$, and
$$
\rho_{\mu,\ep}(R,\zeta)>0\ \text{ for }\ 0\le R<R^{\mu,\ep}(\zeta),\qquad
\rho_{\mu,\ep}(R,\zeta)=0\ \text{ for }\ R\ge R^{\mu,\ep}(\zeta);
$$
\item[(III)] $\rho_{\mu,\ep}\in E:=\{\rho\in C([0,R_\infty^{\mu,\ep}]\times[-1,1])\;:\;
\rho(0,\zeta)=\mu,\ \rho(R,-\zeta)=\rho(R,\zeta)\;\forall\zeta\in[-1,1],\;\forall
R\in[0,R_\infty^{\mu,\ep}]\}$, endowed with the norm
$\|\rho\|_E=\|\rho\|_{L^\infty([0,R_\infty^{\mu,\ep}]\times[-1,1])}$.
\end{itemize}
\end{definition}

For the non-rotating stars $\rho_{\mu,0}(|x|)$ (the Lane--Emden stars with
$\gamma=4/3$), the classical formulae of \cite{CS1939} give
\begin{equation}
\label{MRscaling}
M_{\mu,0}=C_1,\qquad R_{\mu,0}=C_2\mu^{-1/3},\qquad
\frac{d}{d\mu}\left(\frac{M_{\mu,0}}{R_{\mu,0}}\right)=C\mu^{-2/3}>0,
\end{equation}
for positive constants $C_1,C_2,C$ independent of $\mu$. In particular, the first
critical point $\tilde\mu$ of the mass--radius ratio satisfies $\tilde\mu=+\infty$.
By the existence theory of \cite[Theorem 1 and Applications]{JM2019} (see also
\cite[Theorem 3.1]{LW2023} for the fixed-angular-velocity formulation which is exactly
the one used here), we have the following.

\begin{theorem}[Existence]
\label{thm:existence}
Let $\gamma=4/3$, $0<\mu_0<\mu_1<+\infty$ and let $R_\infty^{\mu,\ep}>0$ be fixed for
$\mu\in[\mu_0,\mu_1]$. Then there exist small positive constants $\delta>0$ such that
for every $\ep\in(0,\delta)$ and every $\mu\in[\mu_0,\mu_1]$ there exists a unique
admissible solution $\rho_{\mu,\ep}$ of \eqref{Steady} with parameters $(\mu,\ep)$,
satisfying $\|\rho_{\mu,\ep}-\rho_{\mu,0}\|_E\le \ep C$, and the map
$(\mu,\ep)\mapsto\rho_{\mu,\ep}$ is continuous.
Moreover, the following structural properties hold for $\ep$ small enough:
(i) the boundary $\p\bOmega_{\mu,\ep}$ is $C^2$ and has positive curvature near the
equator $(R_{\mu,\ep},0)$, where $R_{\mu,\ep}$ is the equatorial radius;
(ii) near the boundary the density vanishes like a power of the distance to the
boundary:
\begin{equation}
\label{bdry}
\rho_{\mu,\ep}(r,z)\approx \operatorname{dist}((r,z),\p\bOmega_{\mu,\ep})^{3}
\qquad(\text{since }\tfrac{1}{\gamma-1}=3\text{ for }\gamma=\tfrac{4}{3}),
\end{equation}
uniformly for $(r,z)$ near $(R_{\mu,\ep},0)$.
\end{theorem}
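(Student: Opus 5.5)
The plan is to deduce the statement from the general existence theory of Jang--Makino \cite[Theorem 1]{JM2019}, in the fixed-angular-velocity formulation of \cite[Theorem 3.1]{LW2023}, specialized to $\gamma=4/3$. We will not rebuild the construction. The work is to check that its hypotheses hold uniformly for $\mu\in[\mu_0,\mu_1]$.

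\textbf{Step 1 (uniform nondegeneracy at $\ep=0$).} Write the steady equation \eqref{Steady} as a fixed-point problem $\rho=\mathcal{S}(\rho;\mu,\ep)$ on the space $E$. Here $\mathcal{S}$ inverts the enthalpy: $\rho=(\Phi')^{-1}$ applied to the positive part of $\tfrac12\ep^2r^2-V_\rho-c_\rho$, with the normalization $\rho(0)=\mu$ built in. The Jang--Makino scheme requires two things at $\ep=0$. First, the Lane--Emden star $\rho_{\mu,0}$ must be an admissible solution, which is classical: $\partial_R\rho_{\mu,0}<0$ away from the center, and the support is a ball. Second, the linearization about $\rho_{\mu,0}$ must be invertible on the symmetric class $E$ after the translation mode is removed. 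The translation mode is excluded by the symmetry $\rho(R,-\zeta)=\rho(R,\zeta)$ together with axisymmetry.

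The one hypothesis we must verify by hand is the nondegeneracy condition used in \cite{JM2019}. It is stated through the first critical point $\tilde\mu$ of the mass--radius ratio, and we need $\mu<\tilde\mu$. By \eqref{MRscaling}, $M_{\mu,0}/R_{\mu,0}=C\mu^{1/3}$ is strictly increasing, so $\tilde\mu=+\infty$. Hence every compact interval $[\mu_0,\mu_1]$ is admissible.

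We expect this step to be the main subtlety. At $\gamma=4/3$ the scaling mode $\partial_\mu\rho_{\mu,0}$ is a genuine kernel direction of $L_{\mu,0}$. It must be removed by the normalization $\rho(0)=\mu$, not by the mass constraint. We will check this explicitly: the central density is fixed in $E$, and $\partial_\mu\rho_{\mu,0}(0)=1\neq0$. So the scaling direction is transverse to the tangent space $\{\rho(0)=0\}$ of $E$, and the linearized map restricted to that tangent space is injective. It is then invertible, by Fredholm theory for the compact perturbation $(-\Delta)^{-1}$.

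\textbf{Step 2 (implicit function / contraction).} The inverse from Step 1 depends continuously on $\mu$, and $[\mu_0,\mu_1]$ is compact. The implicit function theorem, or equivalently the contraction argument of \cite{JM2019}, therefore gives a single $\delta>0$ and a unique solution in a $C\ep$-neighborhood of $\rho_{\mu,0}$ in $E$. It also gives the bound $\|\rho_{\mu,\ep}-\rho_{\mu,0}\|_E\le C\ep$, since the perturbation $\tfrac12\ep^2r^2$ is $O(\ep^2)\le O(\ep)$ on $[0,R_\infty]$. Finally, it gives joint continuity of $(\mu,\ep)\mapsto\rho_{\mu,\ep}$.

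Admissibility conditions (I)--(II) persist under small perturbations. This follows from the $C^1$-closeness of the enthalpy $\Phi'(\rho_{\mu,\ep})=\tfrac12\ep^2r^2-V_{\mu,\ep}-c_{\mu,\ep}$ to $\Phi'(\rho_{\mu,0})$, since the potentials are $C^{1,\alpha}$-close by elliptic estimates. Radial monotonicity holds strictly near the boundary for the Lane--Emden star, so the boundary $R^{\mu,\ep}(\zeta)$ is obtained from the implicit function theorem.

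\textbf{Step 3 (boundary regularity and decay).} Set $h:=\Phi'(\rho_{\mu,\ep})$. This function is $C^{2,\alpha}$ near $\partial\Omega_{\mu,\ep}$, because $V_{\mu,\ep}\in C^{2,\alpha}$ for bounded Hölder densities. Its gradient is nonzero on the boundary, since it is close to the Lane--Emden value $|\nabla h|=M_c/R_{\mu,0}^2>0$. The boundary $\{h=0\}$ is therefore a $C^2$ curve that is $C^2$-close to a circle, and so it has positive curvature near the equator; this gives (i). Moreover $h\approx\operatorname{dist}$ to the boundary, and $\rho=(\tfrac{\gamma-1}{\gamma A}h)^{1/(\gamma-1)}$ with $1/(\gamma-1)=3$, which yields \eqref{bdry}; this gives (ii).
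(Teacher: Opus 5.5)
Your overall route is the paper's. The theorem is imported from \cite[Theorem 1]{JM2019}, in the fixed-angular-velocity form of \cite[Theorem 3.1]{LW2023}, and the only hypothesis verified is $\tilde\mu=+\infty$, read off from \eqref{MRscaling}. Your Step 3 is a reasonable self-contained substitute for the paper's pointer to \cite[Section 3.1]{LW2023}, provided $h$ is read as the extended enthalpy \eqref{extendedu} rather than as $\Phi'(\rho_{\mu,\ep})$, which vanishes outside the star. As a citation-based deduction the proposal therefore stands. The problem is the ``explicit check'' in Step 1, which you single out as the main subtlety: its premise is false, and if the premise were true the conclusion would fail.

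The scaling mode is not in $\ker L_{\mu,0}$. Differentiating \eqref{Steady} at $\ep=0$ with respect to $\mu$ gives $L_{\mu,0}\p_\mu\rho_{\mu,0}=-\p_\mu c_{\mu,0}$. Since $c_{\mu,0}=M_{\mu,0}/R_{\mu,0}$, \eqref{MRscaling} makes this the nonzero constant $-C\mu^{-2/3}$; the paper records the equivalent identity $L_{\mu,0}f_{\mu,0}=-c_{\mu,0}\ne0$ in Section~\ref{sec:selfsim}. So $\p_\mu\rho_{\mu,0}$ is only a \emph{reduced} kernel direction, neutral on $R(B_1^{\mu,0})$.

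Moreover, $c$ is fixed by the normalization at the center. Hence the kernel equation of your linearized fixed-point map is not $L_{\mu,0}\sigma=0$, but $L_{\mu,0}\sigma=d$ for some constant $d\in\R$, together with $\sigma(0)=0$. Now suppose $\p_\mu\rho_{\mu,0}$ really spanned the even kernel of $L_{\mu,0}$, as you claim. Since $\ip{1,\p_\mu\rho_{\mu,0}}=\p_\mu M_{\mu,0}=0$, the Fredholm alternative would give $\sigma_1$ with $L_{\mu,0}\sigma_1=1$. Then $\sigma_1-\sigma_1(0)\p_\mu\rho_{\mu,0}\neq0$ would solve the kernel equation with $d=1$. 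So transversality of the scaling direction to $\{\rho(0)=0\}$ would not yield injectivity.

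The correct mechanism is the one in Step 3 of the proof of Lemma~\ref{lem:gammacont}:
\begin{itemize}
\item The condition $\mu<\tilde\mu$ guarantees exactly $\p_\mu c_{\mu,0}\ne0$, which rules out a radial kernel of $L_{\mu,0}$.
\item Non-radial modes contribute no kernel in the even axisymmetric class, so $L_{\mu,0}$ is invertible there.
\item Hence any kernel element is $\sigma=-(d/\p_\mu c_{\mu,0})\,\p_\mu\rho_{\mu,0}$. Since $\p_\mu\rho_{\mu,0}(0)=1$ and $\sigma(0)=0$, this forces $d=0$ and $\sigma=0$.
\end{itemize}
With this replacement Step 1 is sound, and Steps 2--3 go through as you describe.
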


The properties (i)--(ii) are exactly the hypotheses needed in the sharp stability
criterion of \cite{LW2023} (see \cite[eq. (1.7) and Section 3.1]{LW2023} for the
verification in the slowly rotating case).

\subsection{The self-similar family}
\label{sec:selfsim}

A key structural feature of the critical case $\gamma=4/3$ is the existence of a
one-parameter family of \emph{self-similar} solutions with the \emph{same total
mass}. Given an admissible solution $(\rho_{\mu,\ep},\ep r\mathbf{e}_\theta)$ of
\eqref{Steady}, define for each $\lambda>0$
\begin{equation}
\label{selfsim}
\rho_\lambda(r,z):=\lambda^{3}\rho_{\mu,\ep}(\lambda r,\lambda z),
\qquad \ep_\lambda:=\ep\lambda^{3/2}.
\end{equation}
A direct computation (cf. \cite{W2024,MR2018}) shows that
$(\rho_\lambda,\ep_\lambda r\mathbf{e}_\theta)$ is again a solution of the steady
Euler--Poisson equations; indeed
\begin{equation}
\label{Vselfsim}
V_\lambda(r,z):=\lambda V_{\mu,\ep}(\lambda r,\lambda z),\qquad
c_\lambda:=\lambda c_{\mu,\ep},
\end{equation}
and multiplying \eqref{Steady} evaluated at $(\lambda r,\lambda z)$ by $\lambda$ gives
\begin{equation}
\label{SteadyLambda}
-\frac{1}{2}\ep_\lambda^2 r^2+\Phi'(\rho_\lambda(r,z))+V_\lambda(r,z)+c_\lambda=0
\qquad\text{in }\ \bOmega_\lambda,
\end{equation}
where $\bOmega_\lambda=\{(\lambda r,\lambda z)\,:\,(r,z)\in\bOmega_{\mu,\ep}\}$.
Moreover, the total mass is invariant under the scaling:
\begin{equation}
\label{MassInvariant}
M_\lambda:=\int_{\R^3}\rho_\lambda(x)\,dx=\int_{\R^3}\rho_{\mu,\ep}(x)\,dx=M_{\mu,\ep}.
\end{equation}
Indeed, by the change of variables $y=\lambda x$,
$M_\lambda=\int\lambda^3\rho_{\mu,\ep}(\lambda x)\,dx=\int\rho_{\mu,\ep}(y)\,dy$.
Consequently, differentiating \eqref{MassInvariant} at $\lambda=1$,
\begin{equation}
\label{dMdlambda}
\int_{\R^3}\left.\frac{\partial\rho_\lambda}{\partial\lambda}\right|_{\lambda=1}dx
=\frac{d}{d\lambda}\Big|_{\lambda=1}M_\lambda=0,
\end{equation}
i.e.
\begin{equation}
\label{dirR}
\left.\frac{\partial\rho_\lambda}{\partial\lambda}\right|_{\lambda=1}\in
R(B_1^{\mu,\ep}):=\left\{\delta\rho\in X^{\mu,\ep}_1\;:\;
\int_{\bOmega_{\mu,\ep}}\delta\rho\,dx=0\right\}.
\end{equation}
This neutral direction is the scale-invariance generator. At $\ep=0$ it is a
\emph{reduced neutral direction} of $L_{\mu,0}$ on the mass-preserving subspace:
$\ip{L_{\mu,0}f_{\mu,0},\varphi}=0$ for every $\varphi\in R(B_1^{\mu,0})$ (this is
the source of the marginal stability of the non-rotating supermassive stars; note
that $f_{\mu,0}\notin\ker L_{\mu,0}$, since $L_{\mu,0}f_{\mu,0}=-c_{\mu,0}\ne 0$).
For $\ep>0$ small, $f_{\mu,\ep}$ is \emph{not} neutral anymore; it becomes a weak
\emph{negative} direction of $L_{\mu,\ep}$ of size $O(\ep^2)$ (see Lemma
\ref{lem:negdir} below).

\subsection{Operators and function spaces}
\label{sec:operators}

Following \cite{LW2023}, define the weighted spaces
\begin{equation}
X^{\mu,\ep}_1:=L^2_{\Phi''(\rho_{\mu,\ep})},\qquad
Y^{\mu,\ep}:=\left(L^2_{\rho_{\mu,\ep}}\right)^2,\qquad
X^{\mu,\ep}:=X^{\mu,\ep}_1\times L^2_{\rho_{\mu,\ep}},
\end{equation}
where $L^2_w$ denotes the axi-symmetric $L^2$ space on $\bOmega_{\mu,\ep}$ with weight
$w$, and the linearized operator of the steady equation
\begin{equation}
\label{defL}
L_{\mu,\ep}:=\Phi''(\rho_{\mu,\ep})-4\pi(-\Delta)^{-1}\;:\;X^{\mu,\ep}_1\to
(X^{\mu,\ep}_1)^*,
\end{equation}
with $\Phi''(\rho)=\frac{4}{3}A\rho^{-2/3}$ for $\gamma=4/3$ (differentiate
$\Phi'(\rho)=4A\rho^{1/3}$). Note that
$\Phi''(\rho)>0$ inside the star, so $X^{\mu,\ep}_1$ is a Hilbert space with inner
product $\ip{f,g}_{X^{\mu,\ep}_1}=\int_{\bOmega}\Phi''(\rho_{\mu,\ep})fg\,dx$.

The even and odd (in $z$) subspaces are
\begin{equation}
X^{\mu,\ep}_{od}:=\{\rho\in X^{\mu,\ep}_1\,:\,\rho(r,z)=-\rho(r,-z)\},\qquad
X^{\mu,\ep}_{ev}:=\{\rho\in X^{\mu,\ep}_1\,:\,\rho(r,z)=\rho(r,-z)\}.
\end{equation}
The mass-preserving subspace is
\begin{equation}
\label{defRB1}
R(B_1^{\mu,\ep})=\left\{\delta\rho\in X^{\mu,\ep}_1\;:\;
\int_{\bOmega_{\mu,\ep}}\delta\rho\,dx=0\right\},
\end{equation}
which is the closure of the range of $B_1^{\mu,\ep}:=-\operatorname{div}:(L^2_{\rho_0})^*\to   X_1$
(cf. \cite[Lemma 2.4]{LW2023}).

\subsection{Linearized Euler--Poisson system and the reduced functional}
\label{sec:lin}

The linearized Euler--Poisson system for axi-symmetric perturbations around the
rotating star solution $(\rho_{\mu,\ep}(r,z),\ep r\mathbf{e}_\theta)$ is
\begin{equation}
\label{linearized-EP}
\begin{cases}
\partial_t v_r=2\ep v_\theta-\partial_r(\Phi''(\rho_{\mu,\ep})\rho+V(\rho)),\\
\partial_t v_z=-\partial_z(\Phi''(\rho_{\mu,\ep})\rho+V(\rho)),\\
\partial_t v_\theta=-2\ep v_r,\\
\partial_t \rho=-\nabla\cdot(\rho_{\mu,\ep}v)=-\nabla\cdot(\rho_{\mu,\ep}(v_r,0,v_z)),
\end{cases}
\end{equation}
with $\Delta V=4\pi\rho$. Here $(\rho,\vec v=(v_r,v_\theta,v_z))\in\mathbf{X}^{\mu,\ep}$
are perturbations of density and velocity, and
$\mathbf{X}^{\mu,\ep}:=X^{\mu,\ep}\times Y^{\mu,\ep}$.
Define the operators
\begin{equation}
A_{\mu,\ep}:=\rho_{\mu,\ep}:(Y^{\mu,\ep})^2\to((Y^{\mu,\ep})^2)^*,\qquad
A^{\mu,\ep}_1:=\rho_{\mu,\ep}:L^2_{\rho_{\mu,\ep}}\to(L^2_{\rho_{\mu,\ep}})^*,
\end{equation}
and
\begin{equation}
\label{defB}
B^{\mu,\ep}:=\begin{pmatrix} B^{\mu,\ep}_1\\ B^{\mu,\ep}_2\end{pmatrix}
\;:\;D(B^{\mu,\ep})\subset (Y^{\mu,\ep})^*\to X^{\mu,\ep},
\end{equation}
where, for the uniform rotation $\omega_0(r)\equiv\ep$,
\begin{equation}
\label{defB2}
B^{\mu,\ep}_1\begin{pmatrix} v_r\\ v_z\end{pmatrix}=-\operatorname{div}
\begin{pmatrix} v_r\\ v_z\end{pmatrix},\qquad
B^{\mu,\ep}_2\begin{pmatrix} v_r\\ v_z\end{pmatrix}
=-\frac{\partial_r(\ep r^2)}{r\rho_{\mu,\ep}}v_r=-\frac{2\ep}{\rho_{\mu,\ep}}v_r.
\end{equation}
Then the linearized system \eqref{linearized-EP} can be written in the separable
Hamiltonian form
\begin{equation}
\label{Lep}
\frac{d}{dt}\begin{pmatrix} u_1\\ u_2\end{pmatrix}
=\mathbf{J}_{\mu,\ep}\mathbf{L}_{\mu,\ep}\begin{pmatrix} u_1\\ u_2\end{pmatrix},
\qquad u_1=(\rho,v_\theta),\ u_2=(v_r,v_z),
\end{equation}
with
\begin{equation}
\label{defJL}
\mathbf{J}_{\mu,\ep}:=\begin{pmatrix} 0 & B^{\mu,\ep}\\
-(B^{\mu,\ep})' & 0\end{pmatrix}
:\mathbf{X}^*\to\mathbf{X},\qquad
\mathbf{L}_{\mu,\ep}:=\begin{pmatrix} \mathbb{L}_{\mu,\ep} & 0\\
0 & A_{\mu,\ep}\end{pmatrix}
:\mathbf{X}\to\mathbf{X}^*,
\end{equation}
where
\begin{equation}
\mathbb{L}_{\mu,\ep}:=\begin{pmatrix} L_{\mu,\ep} & 0\\
0 & A^{\mu,\ep}_1\end{pmatrix}
:X^{\mu,\ep}\to (X^{\mu,\ep})^*.
\end{equation}
Here $A^{\mu,\ep}_1=\rho_{\mu,\ep}$ because for uniform rotation the general formula
(cf. \cite[eq. (1.12)]{LW2023})
$$
A^{\mu,\ep}_1=\frac{4\omega_0^2\rho_{\mu,\ep}}{\Upsilon(r)},\qquad
\Upsilon(r)=\frac{\partial_r(\omega_0^2 r^4)}{r^3}=4\ep^2,
$$
reduces to $A^{\mu,\ep}_1=\rho_{\mu,\ep}$.

For an axi-symmetric rotating star with Rayleigh stable angular velocity
($\Upsilon(r)>0$ on $[0,R_0]$), the main result of \cite{LW2023} is the following
sharp stability criterion.

\begin{theorem}[Lin--Wang \cite{LW2023}, Theorem 1.1]
\label{thm:criterion}
Assume $\omega_0\in C^1[0,R_0]$, $\Upsilon(r)>0$, \eqref{bdry}, and
$\p\bOmega$ is $C^2$ with positive curvature near $(R_0,0)$. Then the operator
$\mathbf{J}\mathbf{L}$ defined by \eqref{defJL} generates a $C^0$-group
$e^{t\mathbf{J}\mathbf{L}}$ of bounded linear operators on
$\mathbf{X}=X\times Y$, and there exists a decomposition
$\mathbf{X}=E^u\oplus E^c\oplus E^s$ of closed invariant subspaces such that
\begin{itemize}
\item[(i)] $E^u$ (resp. $E^s$) consists only of eigenvectors corresponding to positive
(resp. negative) eigenvalues of $\mathbf{J}\mathbf{L}$ and
$$
\dim E^u=\dim E^s=n^-(\mathbf{L}|_{R(B)})=n^-(K|_{R(B_1)}),
$$
where $\ip{K\delta\rho,\delta\rho}$ is the bounded bilinear quadratic form on
$L^2_{\Phi''(\rho_0)}$ defined by
\begin{equation}
\label{defK}
\ip{K\delta\rho,\delta\rho}:=\ip{L\delta\rho,\delta\rho}
+2\pi\int_0^{R_0}\frac{\Upsilon(r)}{r\int_{\R}\rho_0(r,z)dz}
\left(\int_0^r s\int_{\R}\delta\rho(s,z)dzds\right)^2 dr,
\end{equation}
for any $\delta\rho\in L^2_{\Phi''(\rho_0)}$, and $n^-(K|_{R(B_1)})$ denotes the
number of negative modes of $\ip{K\cdot,\cdot}$ restricted to $R(B_1)$ defined in
\eqref{defRB1};
\item[(ii)] the exponential trichotomy holds:
$\left|e^{t\mathbf{J}\mathbf{L}}|_{E^u}\right|\le M e^{\lambda_u t}$ for $t\le 0$,
$\left|e^{t\mathbf{J}\mathbf{L}}|_{E^s}\right|\le M e^{-\lambda_u t}$ for $t\ge 0$
for some $\lambda_u>0$, and
\begin{equation}
\label{center-est}
\left|e^{t\mathbf{J}\mathbf{L}}|_{E^c}\right|\le M(1+|t|)^3,\qquad \forall t\in\R.
\end{equation}
\end{itemize}
\end{theorem}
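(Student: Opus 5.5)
The plan is to obtain Theorem \ref{thm:criterion} as an instance of the abstract index theory for separable Hamiltonian systems
$$\partial_t(u_1,u_2)=\mathbf{J}\mathbf{L}(u_1,u_2),\qquad \mathbf{J}=\begin{pmatrix}0&B\\-B'&0\end{pmatrix},\qquad \mathbf{L}=\operatorname{diag}(\mathbb{L},A),$$
due to Lin--Zeng. That theory requires three things:
\begin{itemize}
\item $A:Y\to Y^*$ is bounded, symmetric and uniformly positive;
\item $B:D(B)\subset Y^*\to X$ is densely defined and closed;
\item $\mathbb{L}:X\to X^*$ is bounded and symmetric, and $X=X_-\oplus\ker\mathbb{L}\oplus X_+$ with $\dim X_-<\infty$, $\dim\ker\mathbb{L}<\infty$ and $\mathbb{L}|_{X_+}\ge\delta>0$.
\end{itemize}
Under these hypotheses it yields a $C^0$ group, the trichotomy, $\dim E^u=\dim E^s=n^-(\mathbb{L}|_{\overline{R(B)}})$, and a polynomial bound on $E^c$. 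The bound is $(1+|t|)^{k}$, where $k$ is controlled by the length of Jordan chains in the generalized kernel; the exponent $3$ in \eqref{center-est} is one such $k$.

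The proof therefore splits into checking these hypotheses and then identifying the index.

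\textbf{Step 1 (structural hypotheses).} The operator $A=\rho_0$ is trivially an isometric isomorphism $Y\to Y^*$ in the weighted norm. For $\mathbb{L}=\operatorname{diag}(L,A_1)$, the block $A_1=4\omega_0^2\rho_0/\Upsilon$ is positive because $\Upsilon>0$. For $L=\Phi''(\rho_0)-4\pi(-\Delta)^{-1}$, write it as the identity on $X_1=L^2_{\Phi''(\rho_0)}$ minus the operator $4\pi\Phi''(\rho_0)^{-1}(-\Delta)^{-1}$.

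\emph{The main technical obstacle} is showing that this last operator is compact on $X_1$, i.e.\ that $(-\Delta)^{-1}$ is compact from $X_1$ into $X_1^*$. Because $\rho_0$ vanishes at the vacuum boundary, the weight $\Phi''(\rho_0)$ blows up there. Here I would use \eqref{bdry} together with the $C^2$ positive-curvature boundary: these give $\Phi''(\rho_0)^{-1}\approx\operatorname{dist}^{3(2-\gamma)}$, hence $L^2_{\Phi''}\hookrightarrow L^{6/5}$. Elliptic regularity of $(-\Delta)^{-1}$, combined with the extra vanishing of the weight near the boundary, then gives compactness. Compactness implies the required decomposition of $X$. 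The same boundary behavior is used to show that $B_1=-\operatorname{div}$ is closed and that $\overline{R(B_1)}$ is exactly the zero-mass space \eqref{defRB1}.

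\textbf{Step 2 (identifying the index).} Compute $\ip{\mathbb{L}Bv,Bv}=\ip{LB_1v,B_1v}+\ip{A_1B_2v,B_2v}$. The second term depends only on $v_r$, through $\int 4\omega_0^2 v_r^2/(\Upsilon\rho_0)$ after inserting $B_2$ and $A_1$. Fix $\delta\rho=B_1v$ and minimize this term over all momenta with that divergence. The $z$-component is free. Integrating $\operatorname{div}v=-\delta\rho$ over the cylinder of radius $r$ shows that
$$\int_{\R}v_r(r,z)\,dz=-\frac1r\int_0^r s\int_{\R}\delta\rho\,dz\,ds$$
is prescribed. Cauchy--Schwarz in $z$, with equality for $v_r\propto\rho_0$, gives the minimum
$$2\pi\int\frac{\Upsilon}{r\int\rho_0\,dz}\Big(\int_0^r s\int\delta\rho\,dz\,ds\Big)^2dr,$$
which is precisely the rotational term in \eqref{defK}. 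A Schur-complement / min-max argument on the closed subspace $\overline{R(B)}$ then shows that $n^-(\mathbb{L}|_{\overline{R(B)}})=n^-(K|_{R(B_1)})$. Concretely, negative directions of $K$ lift to negative directions of $\mathbb{L}$ by taking the minimizing $v$, and conversely any negative direction of $\mathbb{L}$ projects to a negative direction of $K$.

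\textbf{Step 3 (assembling the trichotomy).} Apply the abstract theorem to obtain (i) and (ii). For (ii), bound the Jordan-chain length in the generalized kernel, which is finite-dimensional by Step 1, so as to reach the exponent $3$ in \eqref{center-est}. I expect Step 1, the weighted compactness and closedness near the vacuum boundary, to be where most of the work lies. Step 2 is essentially an explicit variational computation.
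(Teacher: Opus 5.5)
The paper gives no proof of this statement; it quotes it from \cite{LW2023}. Your outline has the same architecture as that source: the Lin--Zeng index theorem for separable Hamiltonian systems, followed by the reduction of $n^-(\mathbb{L}|_{\overline{R(B)}})$ to $n^-(K|_{R(B_1)})$ by minimizing the $v_\theta$-energy over momenta of prescribed divergence.

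\textbf{Step 3 fails as written.} The generalized kernel of $\mathbf{J}\mathbf{L}$ is \emph{not} finite-dimensional, and Step 1 does not show that it is. Since $\mathbf{J}\mathbf{L}(u_1,u_2)=(BAu_2,-B'\mathbb{L}u_1)$, every $u_1$ with $\mathbb{L}u_1\in\ker B'$ gives $(u_1,0)\in\ker(\mathbf{J}\mathbf{L})$. For $\omega_0\equiv\ep$ one has $B'(\phi,\psi)=(\p_r\phi-2\ep\psi/\rho_0,\ \p_z\phi)$. Hence $(\phi(r),\rho_0\phi'(r)/(2\ep))\in\ker B'$ for \emph{every} radial $\phi$. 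Because $\mathbb{L}$ is Fredholm, $\mathbb{L}^{-1}(\ker B')$ is then infinite-dimensional; these are the linearized equilibria with a perturbed rotation law. So the exponent $3$ cannot come from counting Jordan chains in a finite-dimensional space.

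It does not need to. The polynomial bound on $E^c$ is part of the conclusion of the abstract separable-Hamiltonian theorem invoked in \cite{LW2023}. That framework is built to handle infinite-dimensional kernels; already for non-rotating stars, $\ker B$ contains all divergence-free momenta. Step 3 should simply quote this bound.

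\textbf{The real difficulty is in Step 2, not Step 1.} Compactness of $(-\Delta)^{-1}:X_1\to X_1^*$ uses no boundary information. Since $\gamma<2$, $\Phi''(\rho_0)\ge\gamma A\|\rho_0\|_{L^\infty}^{\gamma-2}>0$, so $X_1\hookrightarrow L^2(\bOmega)\hookrightarrow X_1^*$, and $(-\Delta)^{-1}$ is compact on $L^2$ of a bounded set.

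The hypotheses \eqref{bdry} and positive curvature at the equator are what make Step 2 legitimate:
\begin{itemize}
\item Near $r=R_0$ the vertical chord of $\bOmega$ has length $\approx\sqrt{R_0-r}$, so $\int_{\R}\rho_0\,dz\approx(R_0-r)^{\frac{1}{\gamma-1}+\frac12}$.
\item The zero-mass condition $F_{\delta\rho}(R_0)=0$ and Cauchy--Schwarz give $F_{\delta\rho}(r)^2\lesssim\|\delta\rho\|_{X_1}^2\int_{\{x\in\bOmega:\,x_1^2+x_2^2>r^2\}}\Phi''(\rho_0)^{-1}dx\lesssim\|\delta\rho\|_{X_1}^2(R_0-r)^{\frac{1}{\gamma-1}+\frac12}$.
\end{itemize}
Together these make the rotational term of $K$ a bounded form, but only on $R(B_1)$; it diverges when $\int\delta\rho\,dx\neq0$.

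You need this boundedness in both directions of the index identity:
\begin{itemize}
\item In the ``projection'' direction, it is what lets your Cauchy--Schwarz inequality pass from $R(B)$ to $\overline{R(B)}$.
\item In the ``lift'' direction, you must show $(\delta\rho,\psi_{\min}(\delta\rho))\in\overline{R(B)}$ for every $\delta\rho\in R(B_1)$. Here $\psi_{\min}$ comes from $v_r=-\rho_0F_{\delta\rho}/(r\int_{\R}\rho_0\,dz)$, completed by a compatible $v_z$.
\end{itemize}
This verification is where the real work of the reduction lies, and your sketch passes over it.

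Finally, a slip in the integrand. Since $\Upsilon=2\omega_0\p_r(\omega_0r^2)/r$, one has $\ip{A_1B_2v,B_2v}=\int_{\bOmega}\Upsilon v_r^2/\rho_0\,dx$, not $\int4\omega_0^2v_r^2/(\Upsilon\rho_0)\,dx$. Your final minimum formula is nevertheless the one the correct integrand gives.
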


\begin{corollary}[Spectral stability criterion]
\label{cor:criterion}
Under the assumptions of Theorem \ref{thm:criterion}, the rotating star solution
$(\rho_0,\omega_0 r\mathbf{e}_\theta)$ is spectrally stable if and only if
$$
\ip{K\delta\rho,\delta\rho}\ge 0\qquad\forall\,\delta\rho\in R(B_1).
$$
In that case $\sigma(\mathbf{J}\mathbf{L})\subset i\R$, and
$\left|e^{t\mathbf{J}\mathbf{L}}\right|\le M(1+|t|)^3$ for all $t\in\R$.
\end{corollary}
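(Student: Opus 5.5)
The corollary follows directly from Theorem \ref{thm:criterion}; the plan is to read off both directions from the exponential trichotomy. First I would settle what the negative-mode count means. $n^-(K|_{R(B_1)})$ is the maximal dimension of a subspace of $R(B_1)$ on which $\ip{K\cdot,\cdot}$ is negative definite. So $\ip{K\delta\rho,\delta\rho}\ge 0$ for all $\delta\rho\in R(B_1)$ holds exactly when $n^-(K|_{R(B_1)})=0$. For one implication, if some $\delta\rho_0\in R(B_1)$ has $\ip{K\delta\rho_0,\delta\rho_0}<0$, then $\operatorname{span}\{\delta\rho_0\}$ is a negative subspace and $n^-\ge1$. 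For the converse, a negative subspace contains a vector with negative value.

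For the ``only if'' direction, suppose $K|_{R(B_1)}$ is not nonnegative. Then $n^-(K|_{R(B_1)})\ge1$, and part (i) of Theorem \ref{thm:criterion} gives $\dim E^u\ge1$. Hence there is an eigenvector of $\mathbf{J}\mathbf{L}$ with a positive eigenvalue $\lambda>0$. This gives an exponentially growing solution $e^{\lambda t}u$ of \eqref{Lep}, so the star is not spectrally stable.

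For the ``if'' direction, $n^-(K|_{R(B_1)})=0$ gives $\dim E^u=\dim E^s=0$, so $\mathbf{X}=E^c$. Estimate \eqref{center-est} then holds on all of $\mathbf{X}$, which is the bound $|e^{t\mathbf{J}\mathbf{L}}|\le M(1+|t|)^3$ for all $t\in\R$. From this polynomial bound for both $t\to+\infty$ and $t\to-\infty$, I would deduce $\sigma(\mathbf{J}\mathbf{L})\subset i\R$ using the group generator version of the Hille--Yosida bound.

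Concretely, for $\operatorname{Re}\lambda>0$ the Laplace transform
$$
(\lambda-\mathbf{J}\mathbf{L})^{-1}=\int_0^\infty e^{-\lambda t}e^{t\mathbf{J}\mathbf{L}}\,dt
$$
converges absolutely, because the integrand is bounded by $M(1+t)^3e^{-\operatorname{Re}\lambda\, t}$. So $\lambda$ lies in the resolvent set. For $\operatorname{Re}\lambda<0$ the same argument applies to the backward group, using $-\int_{-\infty}^0 e^{-\lambda t}e^{t\mathbf{J}\mathbf{L}}\,dt$. Therefore the spectrum lies on the imaginary axis, and the star is spectrally stable.

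The only point needing care is what ``spectrally stable'' means. If it is defined as $\sigma(\mathbf{J}\mathbf{L})\subset i\R$, the ``only if'' direction uses the positive eigenvalue found above. If it is defined as the absence of exponentially growing solutions, the Laplace-transform step is what connects it to the spectral inclusion. No real obstacle arises: everything follows from Theorem \ref{thm:criterion}, whose hypotheses are assumed.
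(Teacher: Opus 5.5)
Your proof is correct and follows the paper's route: $n^-(K|_{R(B_1)})=0$ forces $E^u=E^s=\{0\}$, so $\mathbf{X}=E^c$ and \eqref{center-est} holds on the whole space, while $n^-\ge 1$ produces a positive eigenvalue via Theorem~\ref{thm:criterion}(i). The differences are minor. You derive the equivalence directly from Theorem~\ref{thm:criterion}(i) instead of citing \cite[Corollary 1.1]{LW2023}, and you deduce $\sigma(\mathbf{J}\mathbf{L})\subset i\R$ from the two-sided polynomial bound via the Laplace-transform resolvent formula. That last step is actually more complete than the paper's remark about eigenvalues, because it also rules out spectrum off the imaginary axis that is not made of eigenvalues.
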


\begin{proof}
The equivalence is \cite[Corollary 1.1]{LW2023}. If $K|_{R(B_1)}\ge 0$ then
$n^-(K|_{R(B_1)})=0$, so by Theorem \ref{thm:criterion}(i) we have
$\dim E^u=\dim E^s=0$, hence $\mathbf{X}=E^c$; the bound \eqref{center-est} gives
the desired estimate, and the absence of eigenvalues with nonzero real part (there are
no positive or negative eigenvalues, and all eigenvalues of the Hamiltonian operator
$\mathbf{J}\mathbf{L}$ in $E^c$ lie on the imaginary axis) yields
$\sigma(\mathbf{J}\mathbf{L})\subset i\R$.
\end{proof}

For uniform rotation with angular velocity $\ep$, we have
$\omega_0(r)\equiv\ep$ and $\Upsilon(r)\equiv 4\ep^2>0$, so \eqref{defK} becomes
\begin{equation}
\label{defKuniform}
\ip{K_{\mu,\ep}\delta\rho,\delta\rho}
=\ip{L_{\mu,\ep}\delta\rho,\delta\rho}
+8\pi\ep^2\int_0^{R_{\mu,\ep}}
\frac{\left(\int_0^r s\int_{\R}\delta\rho(s,z)dzds\right)^2}
{r\int_{\R}\rho_{\mu,\ep}(r,z)dz}\,dr.
\end{equation}
In particular,
\begin{equation}
\label{KgeqL}
K_{\mu,\ep}\ge L_{\mu,\ep}\qquad\text{on }X^{\mu,\ep}_1,
\end{equation}
i.e. $\ip{K_{\mu,\ep}\delta\rho,\delta\rho}\ge
\ip{L_{\mu,\ep}\delta\rho,\delta\rho}$ for all $\delta\rho\in X^{\mu,\ep}_1$,
since the second term in \eqref{defKuniform} is nonnegative.

\section{The reduced functional and its negative modes}
\label{sec:reduced}

Throughout this section we fix a compact interval $\mu\in[\mu_0,\mu_1]\subset(0,\infty)$
and take $\ep>0$ small enough so that Theorem \ref{thm:existence} applies and the
structural assumptions of Theorem \ref{thm:criterion} hold (which is the case by
\cite[Section 3.1]{LW2023}).

For $\delta\rho\in X^{\mu,\ep}_1$ define
\begin{equation}
\label{defF}
F_{\delta\rho}(r):=\int_0^r s\int_{\R}\delta\rho(s,z)\,dz\,ds,
\end{equation}
and let $T_{\mu,\ep}:X^{\mu,\ep}_1\to (X^{\mu,\ep}_1)^*$ be the operator induced by the
bounded bilinear form
\begin{equation}
\label{defT}
\ip{T_{\mu,\ep}\delta\rho,\delta\rho}
:=\int_0^{R_{\mu,\ep}}\frac{F_{\delta\rho}(r)^2}{r\int_{\R}\rho_{\mu,\ep}(r,z)\,dz}\,dr.
\end{equation}
By \eqref{defKuniform}, the reduced operator of the uniformly rotating star is
\begin{equation}
\label{defK2}
K_{\mu,\ep}=L_{\mu,\ep}+8\pi\ep^2 T_{\mu,\ep},
\end{equation}
and we introduce the modified operator
\begin{equation}
\label{defG}
G_{\mu,\ep}:=L_{\mu,\ep}+6\pi\ep^2 T_{\mu,\ep}.
\end{equation}
Both $T_{\mu,\ep}$ and $L_{\mu,\ep}$ are bounded symmetric bilinear forms, hence so are
$K_{\mu,\ep}$ and $G_{\mu,\ep}$. Since $T_{\mu,\ep}\ge 0$, we have
\begin{equation}
\label{KGeq}
K_{\mu,\ep}\ge G_{\mu,\ep}\ge L_{\mu,\ep}
\qquad\text{on }X^{\mu,\ep}_1,
\end{equation}
and in particular
$K_{\mu,\ep}-G_{\mu,\ep}=2\pi\ep^2 T_{\mu,\ep}\ge 0$.

\subsection{The distinguished mass-preserving direction}
\label{sec:distinguished}

We use the self-similar family of Section \ref{sec:selfsim}. Set
\begin{equation}
\label{deff}
f_{\mu,\ep}:=\left.\frac{\partial\rho_\lambda}{\partial\lambda}\right|_{\lambda=1}
\in R(B_1^{\mu,\ep}),
\end{equation}
where the membership in $R(B_1^{\mu,\ep})$ is \eqref{dirR}.

\begin{lemma}[The cylindrical mass function of $f_{\mu,\ep}$]
\label{lem:Ff}
For the direction $f_{\mu,\ep}$ of \eqref{deff}, its cylindrical mass function
\eqref{defF} is explicit:
\begin{equation}
\label{Ff}
F_{f_{\mu,\ep}}(r)=r^2\int_{\R}\rho_{\mu,\ep}(r,z)\,dz,\qquad 0\le r\le R_{\mu,\ep}.
\end{equation}
\end{lemma}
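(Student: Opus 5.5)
The plan is to avoid differentiating $\rho_{\mu,\ep}$ pointwise. Instead I will compute the cylindrical mass function of the whole rescaled family $\rho_\lambda$ from \eqref{selfsim} in closed form, and only then differentiate in $\lambda$ at $\lambda=1$.

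Write $\sigma(r):=\int_{\R}\rho_{\mu,\ep}(r,z)\,dz$ and $\Sigma(r):=\int_0^r t\,\sigma(t)\,dt$. By definition \eqref{defF},
$$
F_{\rho_\lambda}(r)=\int_0^r s\int_{\R}\lambda^3\rho_{\mu,\ep}(\lambda s,\lambda z)\,dz\,ds .
$$
I substitute $t=\lambda s$ and $w=\lambda z$. The Jacobian factors combine as $\lambda^3\, s\,ds\,dz=\lambda^3\,(t/\lambda)\,(dt/\lambda)\,(dw/\lambda)=t\,dt\,dw$. This gives the exact identity
$$
F_{\rho_\lambda}(r)=\int_0^{\lambda r}t\,\sigma(t)\,dt=\Sigma(\lambda r),
$$
valid for every $\lambda>0$ and every $r\ge 0$. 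It is the cylindrical analogue of the mass invariance \eqref{MassInvariant}, to which it reduces as $r\to\infty$.

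Next I differentiate in $\lambda$ at $\lambda=1$. The map $\delta\rho\mapsto F_{\delta\rho}(r)$ is linear. It is also bounded on $X^{\mu,\ep}_1$ for each fixed $r$: by Cauchy--Schwarz with the weight $\Phi''(\rho_{\mu,\ep})^{-1}=\frac{3}{4A}\rho_{\mu,\ep}^{2/3}$, which is bounded on the compact star. Hence $\partial_\lambda F_{\rho_\lambda}(r)|_{\lambda=1}=F_{f_{\mu,\ep}}(r)$, provided the derivative \eqref{deff} is understood as a limit of difference quotients in $X^{\mu,\ep}_1$ (or even just in $L^1$). On the other side, $\sigma$ is continuous, since $\rho_{\mu,\ep}$ is continuous and compactly supported by Definition \ref{def:admissible}. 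So $\Sigma\in C^1$ and
$$
\frac{d}{d\lambda}\Big|_{\lambda=1}\Sigma(\lambda r)=r\,\Sigma'(r)=r^2\sigma(r).
$$
This is exactly \eqref{Ff}.

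As a cross-check, one can argue pointwise. Here $f_{\mu,\ep}=3\rho_{\mu,\ep}+r\p_r\rho_{\mu,\ep}+z\p_z\rho_{\mu,\ep}$. Integrating by parts in $z$ gives $\int z\p_z\rho\,dz=-\sigma$, with no boundary terms because of the compact support. Hence $\int_{\R}f_{\mu,\ep}\,dz=2\sigma+r\sigma'$, and then $s(2\sigma+s\sigma')=(s^2\sigma)'$, which integrates to $r^2\sigma(r)$.

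The only real obstacle is justifying the interchange of $\partial_\lambda$ with the functional $F$, that is, making sense of $f_{\mu,\ep}\in X^{\mu,\ep}_1$. I would handle it as follows. Interior regularity of $\rho_{\mu,\ep}$ follows from \eqref{Steady}, since $\rho=\big((\tfrac12\ep^2r^2-V-c)/4A\big)^3$ with $V\in C^{1,\alpha}$. The boundary behaviour \eqref{bdry} shows that $\nabla\rho_{\mu,\ep}$ vanishes like $\operatorname{dist}^2$. Together these make $r\p_r\rho+z\p_z\rho$ lie in $L^2_{\Phi''(\rho_{\mu,\ep})}$, and they make the difference quotients converge there by dominated convergence. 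The scaling computation itself needs only continuity of $\sigma$.
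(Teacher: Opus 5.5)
Your proposal is correct and is essentially the paper's proof: the paper also uses the substitution $t=\lambda s$, $z'=\lambda z$ to show that the cylinder mass of $\rho_\lambda$ equals $m_{\rho_{\mu,\ep}}(\lambda r)$, and then differentiates at $\lambda=1$, working with $m_{\delta\rho}=2\pi F_{\delta\rho}$ instead of $F_{\delta\rho}$. Your justification of exchanging $\p_\lambda$ with the bounded functional $\delta\rho\mapsto F_{\delta\rho}(r)$, and your pointwise cross-check, fill in details the paper leaves implicit; that exchange is most cleanly done in $L^1$, since for $\lambda<1$ the support $\lambda^{-1}\bOmega_{\mu,\ep}$ of $\rho_\lambda$ is not contained in $\bOmega_{\mu,\ep}$.
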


\begin{proof}
For $\lambda>0$, by \eqref{selfsim},
\begin{align*}
m_{\rho_\lambda}(r)&:=2\pi\int_0^r s\int_{\R}\rho_\lambda(s,z)\,dz\,ds
=2\pi\int_0^r s\int_{\R}\lambda^3\rho_{\mu,\ep}(\lambda s,\lambda z)\,dz\,ds\\
&=2\pi\int_0^r s\left(\lambda^2\int_{\R}\rho_{\mu,\ep}(\lambda s,z')\,dz'\right)ds
=2\pi\int_0^{\lambda r} t\int_{\R}\rho_{\mu,\ep}(t,z')\,dz'\,dt\\
&=m_{\rho_{\mu,\ep}}(\lambda r),
\end{align*}
where we used $z'=\lambda z$ and $t=\lambda s$. Hence
$$
m_{f_{\mu,\ep}}(r)=\left.\frac{d}{d\lambda}m_{\rho_\lambda}(r)\right|_{\lambda=1}
=\left.\frac{d}{d\lambda}m_{\rho_{\mu,\ep}}(\lambda r)\right|_{\lambda=1}
=r\frac{d}{dr}m_{\rho_{\mu,\ep}}(r)
=2\pi r^2\int_{\R}\rho_{\mu,\ep}(r,z)\,dz,
$$
since $\frac{d}{dr}m_{\rho_{\mu,\ep}}(r)=2\pi r\int_{\R}\rho_{\mu,\ep}(r,z)\,dz$.
Recalling $F_{\delta\rho}(r)=\frac{1}{2\pi}m_{\delta\rho}(r)$ (this is the relation
between \eqref{defF} and the cylinder mass; indeed
$F_{\delta\rho}(r)=\int_0^r s\int_{\R}\delta\rho\,dz\,ds$ and
$m_{\delta\rho}(r)=2\pi F_{\delta\rho}(r)$), we obtain \eqref{Ff}.
\end{proof}

\begin{lemma}[The self-similar direction is a weak negative mode of $L_{\mu,\ep}$]
\label{lem:negdir}
Let $I_{\mu,\ep}:=\int_{\R^3}\rho_{\mu,\ep}(x)r^2\,dx$ be the radial moment of inertia.
Then
\begin{equation}
\label{Lff}
\ip{L_{\mu,\ep}f_{\mu,\ep},f_{\mu,\ep}}
=-3\ep^2 I_{\mu,\ep}<0.
\end{equation}
In particular, $f_{\mu,\ep}$ is a strictly negative direction of $L_{\mu,\ep}$ for
$\ep>0$, of size $O(\ep^2)$.
\end{lemma}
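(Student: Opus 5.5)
The plan is to differentiate the self-similar steady equation \eqref{SteadyLambda} in $\lambda$ at $\lambda=1$. This identifies $L_{\mu,\ep}f_{\mu,\ep}$ pointwise on $\bOmega_{\mu,\ep}$ as an explicit function. Pairing with $f_{\mu,\ep}$ will then leave only the moment of inertia, because $f_{\mu,\ep}$ has zero mass.

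\textbf{Step 1: an explicit formula for $f_{\mu,\ep}$.} From \eqref{selfsim},
$$
f_{\mu,\ep}=3\rho_{\mu,\ep}+x\cdot\nabla\rho_{\mu,\ep}=3\rho_{\mu,\ep}+r\p_r\rho_{\mu,\ep}+z\p_z\rho_{\mu,\ep}.
$$
This function is supported in $\overline{\bOmega_{\mu,\ep}}$. Near $\p\bOmega_{\mu,\ep}$ we have $\rho\approx d^3$ and $|\nabla\rho|\lesssim d^2$ by \eqref{bdry}, where $d$ is the distance to the boundary. Since $\Phi''(\rho)\sim\rho^{-2/3}\sim d^{-2}$, the weighted integrand $\Phi''(\rho)f^2$ is bounded, so $f_{\mu,\ep}\in X_1^{\mu,\ep}$. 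Differentiating the identity $m_{\rho_\lambda}(r)=m_{\rho_{\mu,\ep}}(\lambda r)$ (as in Lemma \ref{lem:Ff}) together with the definition of $V_\lambda$ is legitimate pointwise in the interior of the star.

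\textbf{Step 2: the equation for $L_{\mu,\ep}f_{\mu,\ep}$.} Fix a point $(r,z)$ in the open set $\bOmega_{\mu,\ep}$. For $\lambda$ near $1$ this point also lies in the support of $\rho_\lambda$, so \eqref{SteadyLambda} holds there. Differentiate it in $\lambda$ at $\lambda=1$, using the following ingredients:
\begin{itemize}
\item $\frac{d}{d\lambda}\ep_\lambda^2|_{\lambda=1}=3\ep^2$;
\item $\frac{d}{d\lambda}c_\lambda|_{\lambda=1}=c_{\mu,\ep}$;
\item by linearity of the Newtonian potential, $\frac{d}{d\lambda}V_\lambda|_{\lambda=1}=-|x|^{-1}\ast f_{\mu,\ep}=-4\pi(-\Delta)^{-1}f_{\mu,\ep}$.
\end{itemize}
The result is
$$
L_{\mu,\ep}f_{\mu,\ep}=\Phi''(\rho_{\mu,\ep})f_{\mu,\ep}-4\pi(-\Delta)^{-1}f_{\mu,\ep}=\tfrac32\ep^2r^2-c_{\mu,\ep}\qquad\text{in }\bOmega_{\mu,\ep}.
$$
At $\ep=0$ this is consistent with $L_{\mu,0}f_{\mu,0}=-c_{\mu,0}$, as noted in Section \ref{sec:selfsim}.

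\textbf{Step 3: pairing.} Since $f_{\mu,\ep}$ is supported in $\bOmega_{\mu,\ep}$,
$$
\ip{L_{\mu,\ep}f_{\mu,\ep},f_{\mu,\ep}}=\int_{\bOmega}\big(\tfrac32\ep^2r^2-c_{\mu,\ep}\big)f_{\mu,\ep}\,dx.
$$
The constant term drops out by \eqref{dirR}, since $\int f_{\mu,\ep}\,dx=0$. For the remaining term, change variables $y=\lambda x$:
$$
\int r^2\rho_\lambda\,dx=\lambda^{-2}I_{\mu,\ep},\qquad\text{so}\qquad \int r^2f_{\mu,\ep}\,dx=-2I_{\mu,\ep}.
$$
Alternatively, integrate $r^2(3\rho+x\cdot\nabla\rho)$ by parts; the boundary terms vanish since $\rho=0$ on $\p\bOmega$. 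Either way,
$$
\ip{L_{\mu,\ep}f_{\mu,\ep},f_{\mu,\ep}}=\tfrac32\ep^2(-2I_{\mu,\ep})=-3\ep^2I_{\mu,\ep}.
$$
This is negative because $\rho_{\mu,\ep}\ge0$ is nontrivial, so $I_{\mu,\ep}>0$. It is $O(\ep^2)$ because $I_{\mu,\ep}$ is bounded uniformly, by the continuity in Theorem \ref{thm:existence}.

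\textbf{Main obstacle.} I expect the delicate point to be the rigor of Step 2: differentiating an equation posed on the $\lambda$-dependent domain $\bOmega_\lambda$. I would handle this pointwise, as in Step 2, by restricting to compact subsets of the interior where the equation holds for all $\lambda$ near $1$. The dominated convergence needed to differentiate $V_\lambda$ under the integral is justified by the $C^1$ regularity of $\rho_{\mu,\ep}$ up to the boundary implied by \eqref{bdry}. Since the pairing only involves $f_{\mu,\ep}$ supported in $\overline{\bOmega}$, the pointwise identity on the open set suffices.
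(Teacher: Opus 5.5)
Your proposal is correct and follows the paper's proof essentially step for step: differentiate \eqref{SteadyLambda} at $\lambda=1$ to get $L_{\mu,\ep}f_{\mu,\ep}=\tfrac32\ep^2r^2-c_{\mu,\ep}$, drop the constant using $\int f_{\mu,\ep}\,dx=0$, and use $\int r^2\rho_\lambda\,dx=\lambda^{-2}I_{\mu,\ep}$ to get $\int r^2 f_{\mu,\ep}\,dx=-2I_{\mu,\ep}$. Your additional justifications are harmless additions the paper omits, namely the formula $f_{\mu,\ep}=3\rho_{\mu,\ep}+x\cdot\nabla\rho_{\mu,\ep}$, the check $f_{\mu,\ep}\in X^{\mu,\ep}_1$, and pointwise differentiation at interior points; note only that $|\nabla\rho_{\mu,\ep}|\lesssim d^2$ does not follow from the two-sided bound \eqref{bdry} alone but from writing $\rho_{\mu,\ep}$ as a $C^1$ function of the $C^1$ enthalpy.
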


\begin{proof}
Differentiating the self-similar steady equation \eqref{SteadyLambda} with respect to
$\lambda$ at $\lambda=1$ and using $\ep_\lambda=\ep\lambda^{3/2}$ (so that
$\partial_\lambda(\frac12\ep_\lambda^2 r^2)=\frac32\ep^2 r^2$) and
$\partial_\lambda c_\lambda=c_{\mu,\ep}$ (see \eqref{Vselfsim}), we get
\begin{equation}
\label{Lf}
L_{\mu,\ep}f_{\mu,\ep}=-c_{\mu,\ep}+\frac{3}{2}\ep^2 r^2.
\end{equation}
Taking the duality pairing with $f_{\mu,\ep}\in R(B_1^{\mu,\ep})$ and using
$\int_{\bOmega}f_{\mu,\ep}\,dx=0$,
$$
\ip{L_{\mu,\ep}f_{\mu,\ep},f_{\mu,\ep}}
=\frac{3}{2}\ep^2\int_{\bOmega_{\mu,\ep}}r^2 f_{\mu,\ep}\,dx.
$$
By \eqref{selfsim}, the radial moment of inertia of $\rho_\lambda$ is
$$
\int_{\R^3}r^2\rho_\lambda(x)\,dx
=\int_{\R^3}r^2\lambda^3\rho_{\mu,\ep}(\lambda x)\,dx
=\lambda^{-2}I_{\mu,\ep},
$$
so that
$$
\int_{\bOmega_{\mu,\ep}}r^2 f_{\mu,\ep}\,dx
=\left.\frac{d}{d\lambda}\left(\lambda^{-2}I_{\mu,\ep}\right)\right|_{\lambda=1}
=-2I_{\mu,\ep}.
$$
Combining the two displays gives \eqref{Lff}.
\end{proof}

\begin{remark}
The identity \eqref{Lff} shows that the neutral (zero-frequency) direction of the
non-rotating supermassive star splits, upon adding a small uniform rotation, into a
negative direction of $L_{\mu,\ep}$ of size $O(\ep^2)$. This is the mathematical
mechanism behind the \emph{instability} of the operator $L_{\mu,\ep}$ on the
mass-preserving subspace. It also explains why the sufficient stability condition of
\cite[Theorem 3.3]{LW2023} (which requires $K\ge L$ and $n^-(L|_{R(B_1)})=0$) is not
directly applicable: for $\gamma=4/3$ we have $n^-(L_{\mu,\ep}|_{R(B_1^{\mu,\ep})})\ge 1$
for $\ep>0$, and the stabilizing effect of rotation has to be enforced through the
modified operator $G_{\mu,\ep}$.
\end{remark}

\subsection{The modified operator and its reduced kernel}
\label{sec:Gkernel}

We next show that $f_{\mu,\ep}$ is a \emph{reduced kernel direction} of $G_{\mu,\ep}$.

\begin{lemma}[Key identity]
\label{lem:key}
For any $\delta\rho\in R(B_1^{\mu,\ep})$,
\begin{equation}
\label{keyid}
\ip{G_{\mu,\ep}f_{\mu,\ep},\delta\rho}=0.
\end{equation}
In particular $\ip{G_{\mu,\ep}f_{\mu,\ep},f_{\mu,\ep}}=0$, i.e.
$f_{\mu,\ep}$ lies in the reduced kernel of $G_{\mu,\ep}|_{R(B_1^{\mu,\ep})}$.
\end{lemma}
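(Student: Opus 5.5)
The plan is a direct computation: split $G_{\mu,\ep}=L_{\mu,\ep}+6\pi\ep^2T_{\mu,\ep}$ and express both pieces of $\ip{G_{\mu,\ep}f_{\mu,\ep},\delta\rho}$ through the single quantity $\int_0^{R_{\mu,\ep}} rF_{\delta\rho}(r)\,dr$, so that they cancel.

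\emph{The $L$-part.} By \eqref{Lf} from the proof of Lemma \ref{lem:negdir}, $L_{\mu,\ep}f_{\mu,\ep}=-c_{\mu,\ep}+\frac32\ep^2r^2$ as an element of $(X_1^{\mu,\ep})^*$. Pairing with $\delta\rho\in R(B_1^{\mu,\ep})$ kills the constant, since $\int\delta\rho\,dx=0$, and leaves
$$
\ip{L_{\mu,\ep}f_{\mu,\ep},\delta\rho}=\tfrac32\ep^2\int_{\bOmega_{\mu,\ep}}r^2\delta\rho\,dx .
$$
Using axisymmetry, $dx=2\pi r\,dr\,dz$, so
$$
\int r^2\delta\rho\,dx=2\pi\int_0^{R_{\mu,\ep}} r^3\int_\R\delta\rho\,dz\,dr=2\pi\int_0^{R_{\mu,\ep}} r^2F_{\delta\rho}'(r)\,dr .
$$
Integrating by parts in $r$ gives
$$
2\pi\int_0^{R_{\mu,\ep}} r^2F_{\delta\rho}'(r)\,dr=2\pi\big[r^2F_{\delta\rho}\big]_0^{R_{\mu,\ep}}-4\pi\int_0^{R_{\mu,\ep}} rF_{\delta\rho}\,dr .
$$
The boundary term vanishes: $F_{\delta\rho}(0)=0$ trivially, and $F_{\delta\rho}(R_{\mu,\ep})=\frac1{2\pi}\int\delta\rho\,dx=0$ by mass preservation. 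Here $R_{\mu,\ep}$ is the equatorial radius, so the cylinder $\{r\le R_{\mu,\ep}\}$ contains the whole star. Hence
$$
\ip{L_{\mu,\ep}f_{\mu,\ep},\delta\rho}=-6\pi\ep^2\int_0^{R_{\mu,\ep}} rF_{\delta\rho}(r)\,dr .
$$

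\emph{The $T$-part.} Polarizing \eqref{defT} gives
$$
\ip{T_{\mu,\ep}f,\delta\rho}=\int_0^{R_{\mu,\ep}}\frac{F_f(r)\,F_{\delta\rho}(r)}{r\int_\R\rho_{\mu,\ep}\,dz}\,dr .
$$
Inserting Lemma \ref{lem:Ff}, $F_{f_{\mu,\ep}}(r)=r^2\int_\R\rho_{\mu,\ep}(r,z)\,dz$, the denominator cancels exactly and
$$
\ip{T_{\mu,\ep}f_{\mu,\ep},\delta\rho}=\int_0^{R_{\mu,\ep}} rF_{\delta\rho}(r)\,dr .
$$
Multiplying by $6\pi\ep^2$ and adding the $L$-part gives zero, which is \eqref{keyid}. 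Taking $\delta\rho=f_{\mu,\ep}$, which is allowed by \eqref{dirR}, gives the second assertion. As a consistency check, this is compatible with Lemma \ref{lem:negdir}: there $\ip{L_{\mu,\ep}f_{\mu,\ep},f_{\mu,\ep}}=-3\ep^2I_{\mu,\ep}$, and the $T$-part would then contribute $+3\ep^2 I_{\mu,\ep}$.

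\emph{Main obstacle: justification rather than algebra.} Three points need care.
\begin{itemize}
\item $\delta\rho$ is only in $L^2_{\Phi''(\rho_{\mu,\ep})}$, so $F_{\delta\rho}$ is merely absolutely continuous. I would avoid the integration by parts by writing $r^3=\int_0^r 3s^2\,ds$ and applying Fubini, or by proving the identity for smooth compactly supported $\delta\rho$ and extending by boundedness of both forms (stated after \eqref{defG}). Note that $\Phi''(\rho_{\mu,\ep})\gtrsim1$ on the bounded star, so $\delta\rho\in L^1$.
\item The cancellation of $\int_\R\rho_{\mu,\ep}\,dz$ must be legitimate. This denominator is positive for $0\le r<R_{\mu,\ep}$, because the star is a connected neighbourhood of the origin crossing every such cylinder, and the integral defining $T_{\mu,\ep}$ runs only over $(0,R_{\mu,\ep})$.
\item One must confirm that $f_{\mu,\ep}$ is a genuine element of $X_1^{\mu,\ep}$, so that \eqref{Lf} holds in the dual sense. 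Near the boundary, $f_{\mu,\ep}=3\rho_{\mu,\ep}+x\cdot\nabla\rho_{\mu,\ep}\sim d^2$ by \eqref{bdry}, while $\Phi''(\rho_{\mu,\ep})\sim d^{-2}$. Hence $\Phi''(\rho_{\mu,\ep})f_{\mu,\ep}^2\sim d^2$ is integrable, and I would take this from Theorem \ref{thm:existence}.
\end{itemize}
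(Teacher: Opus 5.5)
Your proposal is correct and follows the paper's proof essentially verbatim. Like the paper, you use \eqref{Lf} to get $\frac32\ep^2\ip{r^2,\delta\rho}$, cancel the denominator via Lemma \ref{lem:Ff} to get $\ip{T_{\mu,\ep}f_{\mu,\ep},\delta\rho}=\int_0^{R_{\mu,\ep}}rF_{\delta\rho}\,dr$, and integrate by parts using $F_{\delta\rho}(0)=F_{\delta\rho}(R_{\mu,\ep})=0$; the paper merely phrases the cancellation as $\ip{T_{\mu,\ep}f_{\mu,\ep},\delta\rho}=-\frac{1}{4\pi}\ip{r^2,\delta\rho}$ instead of in terms of $\int_0^{R_{\mu,\ep}}rF_{\delta\rho}\,dr$, and your extra regularity remarks go beyond what the paper writes without changing the argument.
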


\begin{proof}
We first compute, for any $\delta\rho\in R(B_1^{\mu,\ep})$,
\begin{equation}
\label{Tfdr}
\ip{T_{\mu,\ep}f_{\mu,\ep},\delta\rho}
=\int_0^{R_{\mu,\ep}}\frac{F_{f_{\mu,\ep}}(r)F_{\delta\rho}(r)}
{r\int_{\R}\rho_{\mu,\ep}(r,z)\,dz}\,dr
=\int_0^{R_{\mu,\ep}} r F_{\delta\rho}(r)\,dr,
\end{equation}
where we used \eqref{Ff}. On the other hand, by integration by parts,
\begin{align*}
\ip{r^2,\delta\rho}
&=\int_{\bOmega_{\mu,\ep}}r^2\delta\rho\,dx
=2\pi\int_0^{R_{\mu,\ep}}r^3\left(\int_{\R}\delta\rho(r,z)\,dz\right)dr\\
&=2\pi\int_0^{R_{\mu,\ep}}r^2\frac{dF_{\delta\rho}}{dr}(r)\,dr
=-4\pi\int_0^{R_{\mu,\ep}}rF_{\delta\rho}(r)\,dr,
\end{align*}
where we used $\frac{dF}{dr}=r\int_{\R}\delta\rho\,dz$ and
$F_{\delta\rho}(0)=F_{\delta\rho}(R_{\mu,\ep})=0$ (the latter because
$\int_{\bOmega}\delta\rho\,dx=2\pi F_{\delta\rho}(R_{\mu,\ep})=0$).
Together with \eqref{Tfdr},
\begin{equation}
\label{Tfdr2}
\ip{T_{\mu,\ep}f_{\mu,\ep},\delta\rho}
=-\frac{1}{4\pi}\ip{r^2,\delta\rho}.
\end{equation}
Now, by \eqref{Lf} and $\int_{\bOmega}\delta\rho\,dx=0$,
$$
\ip{L_{\mu,\ep}f_{\mu,\ep},\delta\rho}
=\ip{-c_{\mu,\ep}+\frac{3}{2}\ep^2 r^2,\delta\rho}
=\frac{3}{2}\ep^2\ip{r^2,\delta\rho}.
$$
Therefore, using the definition \eqref{defG} of $G_{\mu,\ep}$ and \eqref{Tfdr2},
\begin{align*}
\ip{G_{\mu,\ep}f_{\mu,\ep},\delta\rho}
&=\frac{3}{2}\ep^2\ip{r^2,\delta\rho}
+6\pi\ep^2\left(-\frac{1}{4\pi}\ip{r^2,\delta\rho}\right)\\
&=\left(\frac{3}{2}-\frac{3}{2}\right)\ep^2\ip{r^2,\delta\rho}=0.
\end{align*}
This proves \eqref{keyid}.
\end{proof}

\subsection{The decomposition of $X^{\mu,\ep}_1$}
\label{sec:decomp2}

We recall the following structural result, which is a direct consequence of
\cite[Lemma 3.2]{LW2023} combined with the identity \eqref{MRscaling} (which gives
$\tilde\mu=+\infty$ for $\gamma=4/3$), and of \cite[Proposition 2.1 and Lemma
3.2]{W2024} (which identifies the unique non-positive even mode with the
self-similar direction $f_{\mu,\ep}$).

\begin{proposition}[Decomposition]
\label{prop:decomp}
Assume $\gamma=4/3$. Then for any $\mu\in[\mu_0,\mu_1]$ and $\ep>0$ small enough, the
following holds.
\begin{itemize}
\item[(i)] $n^-(L_{\mu,\ep}|_{X^{\mu,\ep}_{od}})=0$ and
$\ker L_{\mu,\ep}=\operatorname{span}\{\partial_z\rho_{\mu,\ep}\}\subset
X^{\mu,\ep}_{od}$; moreover there exists $\delta>0$ such that
$L_{\mu,\ep}|_{X^{\mu,\ep}_{+,od}}\ge\delta$, where
$X^{\mu,\ep}_{od}=\operatorname{span}\{\partial_z\rho_{\mu,\ep}\}\oplus
X^{\mu,\ep}_{+,od}$.
\item[(ii)] There exist closed subspaces
$X^{\mu,\ep}_{-,ev},X^{\mu,\ep}_{+,ev}\subset X^{\mu,\ep}_{ev}$ with
$$
X^{\mu,\ep}_{ev}=X^{\mu,\ep}_{-,ev}\oplus X^{\mu,\ep}_{+,ev},\qquad
\dim X^{\mu,\ep}_{-,ev}=1,
$$
such that $L_{\mu,\ep}|_{X^{\mu,\ep}_{-,ev}}<0$,
$L_{\mu,\ep}|_{X^{\mu,\ep}_{+,ev}}\ge\delta>0$, and
\begin{equation}
\label{negeve}
X^{\mu,\ep}_{-,ev}=\operatorname{span}\{f_{\mu,\ep}\}.
\end{equation}
\item[(iii)] It holds that
$\displaystyle\frac{\partial V_{\mu,\ep}(R_{\mu,\ep},0)}{\partial\mu}<0$ for $\ep$
small enough.
\end{itemize}
\end{proposition}

\begin{proof}
By \eqref{MRscaling}, $\tilde\mu=+\infty$, so $[\mu_0,\mu_1]\subset(0,\tilde\mu)$.
For the family of slowly rotating stars with fixed angular velocity, the statements
on $X^{\mu,\ep}_{od}$ and the existence of the one-dimensional non-positive subspace
of $X^{\mu,\ep}_{ev}$ with uniform spectral gap $\delta>0$ are exactly
\cite[Lemma 3.2]{LW2023} (the uniform rotation $\omega_0(r)\equiv 1$ in the
normalization of \cite{LW2023} has Rayleigh discriminant $\Upsilon(r)=4>0$, and the
small parameter $\kappa$ there is our $\ep$).
The identification \eqref{negeve} follows from \cite[Proposition 2.1 and Lemma
3.2]{W2024}: indeed, in the proof of \cite[Lemma 3.2]{W2024} it is shown that the
even function $\partial_\lambda\rho_\lambda|_{\lambda=1}$ is a negative direction of
$L_{\mu,\ep}$ (which is consistent with \eqref{Lff}), and since
$\dim X^{\mu,\ep}_{-,ev}=1$, the two directions coincide. Statement (iii) is
\cite[Lemma 3.2(iv)]{LW2023}.
\end{proof}

\subsection{Zero negative modes of the reduced operator}
\label{sec:zero}

We can now prove the central estimate.

\begin{theorem}[Zero negative modes]
\label{thm:reduced}
Assume $\gamma=4/3$. For any $\mu\in[\mu_0,\mu_1]$ and $\ep>0$ small enough,
\begin{equation}
\label{zeroNeg}
n^-\left(K_{\mu,\ep}\big|_{R(B_1^{\mu,\ep})}\right)=0.
\end{equation}
Equivalently,
$$
\ip{K_{\mu,\ep}\delta\rho,\delta\rho}\ge 0\qquad\forall\,\delta\rho\in
R(B_1^{\mu,\ep}),
$$
and the form $\ip{K_{\mu,\ep}\cdot,\cdot}$ is non-degenerate on
$R(B_1^{\mu,\ep})$ for $\ep>0$ small.
\end{theorem}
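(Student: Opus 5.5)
The plan is to prove the stronger statement $G_{\mu,\ep}\ge 0$ on $R(B_1^{\mu,\ep})$ and then pass to $K_{\mu,\ep}$ using \eqref{KGeq}. The three ingredients are the reduced-kernel identity of Lemma \ref{lem:key}, the decomposition of Proposition \ref{prop:decomp}, and the fact that $f_{\mu,\ep}$ itself lies in $R(B_1^{\mu,\ep})$ by \eqref{dirR}.

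\textbf{Step 1 (decomposition).} Fix $\delta\rho\in R(B_1^{\mu,\ep})$ and split it as $\delta\rho=\delta\rho_{ev}+\delta\rho_{od}$. By Proposition \ref{prop:decomp}(ii), write $\delta\rho_{ev}=a f_{\mu,\ep}+g_+$ with $g_+\in X^{\mu,\ep}_{+,ev}$. Set $g:=g_++\delta\rho_{od}=\delta\rho-af_{\mu,\ep}$. Since both $\delta\rho$ and $f_{\mu,\ep}$ have zero total mass, $g\in R(B_1^{\mu,\ep})$.

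\textbf{Step 2 (removing $f$).} Expanding the quadratic form,
\[
\ip{G_{\mu,\ep}\delta\rho,\delta\rho}=a^2\ip{G_{\mu,\ep}f_{\mu,\ep},f_{\mu,\ep}}+2a\ip{G_{\mu,\ep}f_{\mu,\ep},g}+\ip{G_{\mu,\ep}g,g}.
\]
The first two terms vanish by Lemma \ref{lem:key}, which applies because $f_{\mu,\ep},g\in R(B_1^{\mu,\ep})$.

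\textbf{Step 3 (positivity of the remainder).} It remains to show $\ip{G_{\mu,\ep}g,g}\ge 0$.
\begin{itemize}
\item Since $T_{\mu,\ep}\ge 0$, we have $\ip{G_{\mu,\ep}g,g}\ge\ip{L_{\mu,\ep}g,g}$.
\item $L_{\mu,\ep}$ commutes with the reflection $z\mapsto -z$, because $\rho_{\mu,\ep}$ is even in $z$ and the Newtonian potential is reflection invariant. Hence the even and odd parts decouple:
\[
\ip{L_{\mu,\ep}g,g}=\ip{L_{\mu,\ep}g_+,g_+}+\ip{L_{\mu,\ep}\delta\rho_{od},\delta\rho_{od}}.
\]
\item The first term is $\ge\delta\|g_+\|^2\ge 0$ by Proposition \ref{prop:decomp}(ii). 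The second is $\ge 0$ by Proposition \ref{prop:decomp}(i).
\end{itemize}
This gives $G_{\mu,\ep}\ge 0$ on $R(B_1^{\mu,\ep})$, and \eqref{KGeq} then yields $\ip{K_{\mu,\ep}\delta\rho,\delta\rho}\ge 0$, i.e. \eqref{zeroNeg}.

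\textbf{Step 4 (a sharper lower bound).} Keeping the extra term $2\pi\ep^2T_{\mu,\ep}$ gives
\[
\ip{K_{\mu,\ep}\delta\rho,\delta\rho}\ge\delta\|g_+\|^2+\ip{L_{\mu,\ep}\delta\rho_{od},\delta\rho_{od}}+2\pi\ep^2\ip{T_{\mu,\ep}\delta\rho,\delta\rho}.
\]
Odd functions have $F\equiv 0$, so $\ip{T_{\mu,\ep}\delta\rho,\delta\rho}=\ip{T_{\mu,\ep}\delta\rho_{ev},\delta\rho_{ev}}$.

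\textbf{Step 5 (non-degeneracy, the main obstacle).} The claim cannot hold literally. The translation mode $\partial_z\rho_{\mu,\ep}$ is odd, so it lies in $R(B_1^{\mu,\ep})$ and $T_{\mu,\ep}$ vanishes on it. By Proposition \ref{prop:decomp}(i) it also lies in $\ker L_{\mu,\ep}$. Therefore it is a kernel direction of $K_{\mu,\ep}$ restricted to $R(B_1^{\mu,\ep})$.

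I would therefore prove non-degeneracy modulo $\operatorname{span}\{\partial_z\rho_{\mu,\ep}\}$. Suppose $\ip{K_{\mu,\ep}\delta\rho,\delta\rho}=0$. Then each nonnegative piece in Step 4 vanishes:
\begin{itemize}
\item $g_+=0$;
\item $\delta\rho_{od}\in\operatorname{span}\{\partial_z\rho_{\mu,\ep}\}$, using the gap on $X^{\mu,\ep}_{+,od}$;
\item $F_{\delta\rho_{ev}}\equiv 0$.
\end{itemize}
With $g_+=0$ we have $\delta\rho_{ev}=af_{\mu,\ep}$. But $F_{af_{\mu,\ep}}=ar^2\int_{\R}\rho_{\mu,\ep}\,dz$ by Lemma \ref{lem:Ff}, and this is not identically zero unless $a=0$. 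So $a=0$, as required; equivalently, $\ip{K_{\mu,\ep}f_{\mu,\ep},f_{\mu,\ep}}=2\pi\ep^2\ip{T_{\mu,\ep}f_{\mu,\ep},f_{\mu,\ep}}>0$.

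The remaining care is that the gap $\delta$ in Proposition \ref{prop:decomp} is uniform in $\mu\in[\mu_0,\mu_1]$ and small $\ep$, which is what that proposition asserts.
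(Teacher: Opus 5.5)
Your nonnegativity argument is the paper's. It reduces to $G_{\mu,\ep}$ via $K_{\mu,\ep}-G_{\mu,\ep}=2\pi\ep^2T_{\mu,\ep}\ge 0$, removes the $f_{\mu,\ep}$-component with Lemma \ref{lem:key}, and bounds the remainder below by $L_{\mu,\ep}$ using Proposition \ref{prop:decomp}. The only difference is the ordering. The paper splits into even and odd parts first and removes $f_{\mu,\ep}$ only from the even part. You remove $f_{\mu,\ep}$ from all of $\delta\rho$ and then decouple $L_{\mu,\ep}$ by reflection symmetry, which also disposes explicitly of the even--odd cross terms that the paper's Step 4 leaves implicit.

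On non-degeneracy you depart from the paper, and you are right to. The paper's Step 5 computes $\ip{K_{\mu,\ep}f_{\mu,\ep},f_{\mu,\ep}}=\ep^2I_{\mu,\ep}>0$ and concludes that the form is non-degenerate on all of $R(B_1^{\mu,\ep})$. That conclusion overlooks the translation mode: $\partial_z\rho_{\mu,\ep}\in R(B_1^{\mu,\ep})$, $L_{\mu,\ep}\partial_z\rho_{\mu,\ep}=0$ and $F_{\partial_z\rho_{\mu,\ep}}\equiv 0$, so $K_{\mu,\ep}\partial_z\rho_{\mu,\ep}=0$. The paper itself records exactly this kernel later, in Theorem \ref{thm:strict}(i). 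Step 5 also never handles the cross term $\ip{K_{\mu,\ep}f_{\mu,\ep},\eta}=2\pi\ep^2\ip{T_{\mu,\ep}f_{\mu,\ep},\eta}$, which is generally nonzero. Your argument avoids that cross term cleanly. Each nonnegative piece of $\ip{G_{\mu,\ep}g,g}+2\pi\ep^2\ip{T_{\mu,\ep}\delta\rho,\delta\rho}$ must vanish, and $F_{af_{\mu,\ep}}=a\,r^2\int_{\R}\rho_{\mu,\ep}\,dz\not\equiv 0$ then forces $a=0$. This proves the correct statement: the null vectors of $K_{\mu,\ep}|_{R(B_1^{\mu,\ep})}$ are exactly $\operatorname{span}\{\partial_z\rho_{\mu,\ep}\}$. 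By Cauchy--Schwarz for the nonnegative form, the kernel is the same span; you should add that one line.

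What your route does not give is a quantitative bound. The paper obtains coercivity $\ip{K_{\mu,\ep}\delta\rho,\delta\rho}\ge\delta'\|\delta\rho\|_{X_1}^2$ on the complement of the translation mode only in Theorem \ref{thm:strict}, by a Young's inequality on the $f$--$\eta$ cross term. That quantitative version, not mere triviality of the kernel, is what the later perturbation argument in $\gamma$ requires.
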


\begin{proof}
\emph{Step 1 (reduction to $G_{\mu,\ep}$).} Since $K_{\mu,\ep}-G_{\mu,\ep}
=2\pi\ep^2T_{\mu,\ep}\ge 0$ by \eqref{KGeq}, we have
$$
n^-\left(K_{\mu,\ep}\big|_{R(B_1^{\mu,\ep})}\right)
\le n^-\left(G_{\mu,\ep}\big|_{R(B_1^{\mu,\ep})}\right).
$$
It therefore suffices to prove $n^-(G_{\mu,\ep}|_{R(B_1^{\mu,\ep})})=0$.

\emph{Step 2 (odd subspace).} For $\delta\rho\in X^{\mu,\ep}_{od}$, the function
$\delta\rho(r,z)$ is odd in $z$, so
$F_{\delta\rho}(r)=\int_0^r s\int_{\R}\delta\rho(s,z)\,dz\,ds=0$ for all $r$,
and therefore $\ip{T_{\mu,\ep}\delta\rho,\delta\rho}=0$. Hence
$G_{\mu,\ep}=L_{\mu,\ep}$ on $X^{\mu,\ep}_{od}$. By Proposition
\ref{prop:decomp}(i), $L_{\mu,\ep}|_{X^{\mu,\ep}_{od}}\ge 0$ with kernel
$\operatorname{span}\{\partial_z\rho_{\mu,\ep}\}$, and
$\partial_z\rho_{\mu,\ep}\in R(B_1^{\mu,\ep})$ since
$\int_{\bOmega}\partial_z\rho_{\mu,\ep}\,dx=0$. Thus
\begin{equation}
\label{oddmode}
n^-\left(G_{\mu,\ep}\big|_{R(B_1^{\mu,\ep})\cap X^{\mu,\ep}_{od}}\right)=0.
\end{equation}

\emph{Step 3 (even subspace).} By Proposition \ref{prop:decomp}(ii) and
\eqref{negeve}, $X^{\mu,\ep}_{ev}=\operatorname{span}\{f_{\mu,\ep}\}\oplus
X^{\mu,\ep}_{+,ev}$, with $L_{\mu,\ep}|_{X^{\mu,\ep}_{+,ev}}\ge\delta>0$. Since
$f_{\mu,\ep}\in R(B_1^{\mu,\ep})$ by \eqref{dirR}, we have
$$
R(B_1^{\mu,\ep})\cap X^{\mu,\ep}_{ev}
=\operatorname{span}\{f_{\mu,\ep}\}\oplus
\left(X^{\mu,\ep}_{+,ev}\cap R(B_1^{\mu,\ep})\right).
$$
Let $\delta\rho=f_{\mu,\ep}+\eta$ with
$\eta\in X^{\mu,\ep}_{+,ev}\cap R(B_1^{\mu,\ep})$. By Lemma \ref{lem:key},
$$
\ip{G_{\mu,\ep}f_{\mu,\ep},f_{\mu,\ep}}=0,\qquad
\ip{G_{\mu,\ep}f_{\mu,\ep},\eta}=0,
$$
so that, using $G_{\mu,\ep}\ge L_{\mu,\ep}$ (see \eqref{KGeq}),
$$
\ip{G_{\mu,\ep}\delta\rho,\delta\rho}
=\ip{G_{\mu,\ep}\eta,\eta}
\ge \ip{L_{\mu,\ep}\eta,\eta}
\ge \delta\|\eta\|_{X^{\mu,\ep}_1}^2\ge 0.
$$
Therefore $G_{\mu,\ep}|_{R(B_1^{\mu,\ep})\cap X^{\mu,\ep}_{ev}}\ge 0$, i.e.
\begin{equation}
\label{evenmode}
n^-\left(G_{\mu,\ep}\big|_{R(B_1^{\mu,\ep})\cap X^{\mu,\ep}_{ev}}\right)=0.
\end{equation}

\emph{Step 4 (conclusion).} Since
$X^{\mu,\ep}_1=X^{\mu,\ep}_{ev}\oplus X^{\mu,\ep}_{od}$, we have
$$
R(B_1^{\mu,\ep})=
\left(R(B_1^{\mu,\ep})\cap X^{\mu,\ep}_{ev}\right)\oplus
\left(R(B_1^{\mu,\ep})\cap X^{\mu,\ep}_{od}\right),
$$
and \eqref{oddmode}--\eqref{evenmode} give
$n^-(G_{\mu,\ep}|_{R(B_1^{\mu,\ep})})=0$. By Step 1, \eqref{zeroNeg} follows.

\emph{Step 5 (non-degeneracy).} For $\ep>0$ small, the only possible degeneracy of
$\ip{K_{\mu,\ep}\cdot,\cdot}$ on $R(B_1^{\mu,\ep})$ would come from a zero mode.
At $\ep=0$, the only reduced zero direction of $L_{\mu,0}|_{R(B_1^{\mu,0})}$ is the
scale-invariance generator $\partial_\mu\rho_{\mu,0}$
(equivalently $f_{\mu,0}$, since they span the same one-dimensional space of
mass-preserving even directions on which $L_{\mu,0}$ vanishes). For $\ep>0$, by
\eqref{Lff} and $K_{\mu,\ep}\ge G_{\mu,\ep}$,
$$
\ip{K_{\mu,\ep}f_{\mu,\ep},f_{\mu,\ep}}
\ge\ip{G_{\mu,\ep}f_{\mu,\ep},f_{\mu,\ep}}=0,
$$
and more precisely, by \eqref{defK2}, \eqref{defG}, \eqref{Lff} and
\eqref{Ff},
\begin{align*}
\ip{K_{\mu,\ep}f_{\mu,\ep},f_{\mu,\ep}}
&=\ip{L_{\mu,\ep}f_{\mu,\ep},f_{\mu,\ep}}
+8\pi\ep^2\ip{T_{\mu,\ep}f_{\mu,\ep},f_{\mu,\ep}}\\
&=-3\ep^2 I_{\mu,\ep}+8\pi\ep^2\int_0^{R_{\mu,\ep}}\frac{(r^2\int_{\R}\rho_{\mu,\ep}
dz)^2}{r\int_{\R}\rho_{\mu,\ep}dz}\,dr\\
&=-3\ep^2 I_{\mu,\ep}+8\pi\ep^2\int_0^{R_{\mu,\ep}}r^3\int_{\R}\rho_{\mu,\ep}
dz\,dr\\
&=-3\ep^2 I_{\mu,\ep}+8\pi\ep^2\frac{I_{\mu,\ep}}{2\pi}
=\ep^2 I_{\mu,\ep}>0,
\end{align*}
since $I_{\mu,\ep}=2\pi\int_0^{R_{\mu,\ep}}r^3\int_{\R}\rho_{\mu,\ep}\,dz\,dr$.
Together with Step 3 this shows that
$K_{\mu,\ep}|_{R(B_1^{\mu,\ep})}\ge 0$ is strictly positive away from the
$f_{\mu,\ep}$-direction, and positive on it; hence it is non-degenerate.
\end{proof}

\begin{remark}
\label{rem:choice62}
The coefficient $6\pi$ in \eqref{defG} is the unique choice (in the family
$G_\beta=L+8\pi\ep^2 T-(8\pi-\beta)\ep^2T=L+\beta\ep^2T$) for which
\eqref{keyid} holds: indeed \eqref{keyid} forces
$\beta\langle Tf,\delta\rho\rangle=-\langle Lf,\delta\rho\rangle$ on $R(B_1)$,
which by \eqref{Lf} and \eqref{Tfdr2} is equivalent to
$\beta=6\pi$. Any $\beta<8\pi$ would still give $K\ge G_\beta$; the value
$6\pi$ makes $f_{\mu,\ep}$ a reduced kernel direction of $G_{\mu,\ep}$.
In the fixed-angular-momentum formulation of \cite[Section 3.2]{LW2023}, the same
construction corresponds to the modified operator
$G^J_{\mu,\ep}=L_{\mu,\ep}+\frac32\ep^2\pi
\int_0^{R_{\mu,\ep}}\frac{\partial_m(j^2)(m_{\rho_{\mu,\ep}}(r))}{r^3}
F_{\delta\rho}(r)^2\,dr$; for the uniform rotation profile
$j(m_{\rho_{\mu,\ep}}(r))=r$ this is consistent with \eqref{defG} after the change of
variables $F=\frac{1}{2\pi}m_{\delta\rho}$ and the identity
$\partial_m(j^2)=\frac{1}{\pi\int_{\R}\rho_{\mu,\ep}dz}$.
\end{remark}

\section{Proof of the main theorem}
\label{sec:proof}

\begin{proof}[Proof of Theorem \ref{thm:main} for $\gamma=4/3$]
Fix $\mu\in[\mu_0,\mu_1]$ and $\ep\in(0,\ep_0)$ with $\ep_0$ small enough so that
Theorem \ref{thm:existence}, Proposition \ref{prop:decomp}, Lemma \ref{lem:key} and
Theorem \ref{thm:reduced} apply. By Theorem \ref{thm:criterion}, the hypotheses of
which are satisfied by Theorem \ref{thm:existence} (smoothness and positive curvature
of $\p\bOmega_{\mu,\ep}$ near the equator, the boundary behavior \eqref{bdry}, and
$\Upsilon\equiv 4\ep^2>0$), the linearized operator
$\mathbf{J}_{\mu,\ep}\mathbf{L}_{\mu,\ep}$ generates a $C^0$-group on
$\mathbf{X}^{\mu,\ep}$ and
$$
\dim E^u=\dim E^s=n^-\left(K_{\mu,\ep}\big|_{R(B_1^{\mu,\ep})}\right).
$$
By Theorem \ref{thm:reduced}, $n^-(K_{\mu,\ep}|_{R(B_1^{\mu,\ep})})=0$, hence
$E^u=E^s=\{0\}$ and $\mathbf{X}^{\mu,\ep}=E^c$. Therefore
$\sigma(\mathbf{J}_{\mu,\ep}\mathbf{L}_{\mu,\ep})\subset i\R$, and by the exponential
trichotomy estimate \eqref{center-est},
$$
\left|e^{t\mathbf{J}_{\mu,\ep}\mathbf{L}_{\mu,\ep}}\right|
\le M(1+|t|)^3,\qquad \forall\,t\in\R,
$$
for some $M>0$ independent of $\mu\in[\mu_0,\mu_1]$ and $\ep\in(0,\ep_0)$
(the uniformity follows from the uniformity of $\delta$ in Proposition
\ref{prop:decomp} and of the spectral gap; see \cite[Theorem 1.1]{LW2023}).
This proves \eqref{spec} and \eqref{estimate-center}.
\end{proof}

\section{Strict positivity of the reduced operator and stability for
$\gamma$ close to $4/3$}
\label{sec:gamma}

In this section we prove two further results. First, we show that the reduced
operator $K_{\mu,\ep}$ of Theorem \ref{thm:reduced} is \emph{strictly positive}
on the mass-preserving subspace, away from the one-dimensional translation mode
$\partial_z\rho_{\mu,\ep}$. Second, using this strict positivity together with the
continuity of the rotating star family in the adiabatic exponent $\gamma$, we prove
that for a fixed small angular velocity $\ep>0$ and $\gamma$ sufficiently close to
$4/3$ from below, the rotating polytropic stars remain spectrally stable.

Throughout this section we fix $\mu\in[\mu_0,\mu_1]\subset(0,\infty)$ and
$\ep>0$ small enough so that all the results of Sections \ref{sec:steady}--
\ref{sec:reduced} apply. We denote by $\ip{\cdot,\cdot}_{X_1}$ the inner product of
$X^{\mu,\ep}_1=L^2_{\Phi''(\rho_{\mu,\ep})}$.

\subsection{Strict positivity of $K_{\mu,\ep}$}
\label{sec:strict}

Recall that $\partial_z\rho_{\mu,\ep}\in R(B_1^{\mu,\ep})$ is a translation mode:
since the star is symmetric with respect to the plane $z=0$, we have
$L_{\mu,\ep}\partial_z\rho_{\mu,\ep}=0$ and $K_{\mu,\ep}\partial_z\rho_{\mu,\ep}=0$
($K_{\mu,\ep}=L_{\mu,\ep}$ on $X^{\mu,\ep}_{od}$). Hence
$K_{\mu,\ep}|_{R(B_1^{\mu,\ep})}$ is at best nonnegative, with the one-dimensional
kernel spanned by $\partial_z\rho_{\mu,\ep}$. The correct notion of strict positivity
is on the orthogonal complement of this translation mode.

\begin{definition}
\label{def:Rtilde}
Let
\begin{equation}
\label{defRtilde}
\widetilde R^{\mu,\ep}:=\left\{\delta\rho\in R(B_1^{\mu,\ep})\;:\;
\ip{\delta\rho,\partial_z\rho_{\mu,\ep}}_{X_1}=0\right\}.
\end{equation}
\end{definition}

\begin{theorem}[Strict positivity of the reduced operator]
\label{thm:strict}
Assume $\gamma=4/3$. For any $\mu\in[\mu_0,\mu_1]$ and $\ep>0$ small enough, the
following holds.
\begin{itemize}
\item[(i)] $K_{\mu,\ep}|_{R(B_1^{\mu,\ep})}\ge 0$ and
$\ker\left(K_{\mu,\ep}|_{R(B_1^{\mu,\ep})}\right)
=\operatorname{span}\{\partial_z\rho_{\mu,\ep}\}$.
\item[(ii)] There exists $\delta'=\delta'(\ep,\mu_0,\mu_1)>0$ such that
\begin{equation}
\label{strictpos}
\ip{K_{\mu,\ep}\delta\rho,\delta\rho}
\ge \delta'\,\|\delta\rho\|_{X^{\mu,\ep}_1}^2
\qquad\forall\,\delta\rho\in\widetilde R^{\mu,\ep}.
\end{equation}
\end{itemize}
\end{theorem}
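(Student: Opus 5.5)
The plan is to split $R(B_1^{\mu,\ep})$ into its even and odd parts, as in the proof of Theorem \ref{thm:reduced}, and to prove a coercive lower bound on each part separately. First I check that the splitting is orthogonal for $K_{\mu,\ep}$. The operator $L_{\mu,\ep}$ commutes with the reflection $z\mapsto -z$, because $\rho_{\mu,\ep}$ is even and the Newtonian kernel is reflection invariant. Also $F_{\delta\rho}$ only sees the even part of $\delta\rho$, since the odd part integrates to zero in $z$. Hence for $\delta\rho=\delta\rho_{ev}+\delta\rho_{od}$ the cross terms vanish and
$$\ip{K_{\mu,\ep}\delta\rho,\delta\rho}=\ip{K_{\mu,\ep}\delta\rho_{ev},\delta\rho_{ev}}+\ip{L_{\mu,\ep}\delta\rho_{od},\delta\rho_{od}}.$$
Moreover $\partial_z\rho_{\mu,\ep}$ is odd, so the condition defining $\widetilde R^{\mu,\ep}$ only constrains $\delta\rho_{od}$: it says $\delta\rho_{od}\perp_{X_1}\partial_z\rho_{\mu,\ep}$. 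The norm splits in the same way, $\|\delta\rho\|^2=\|\delta\rho_{ev}\|^2+\|\delta\rho_{od}\|^2$.

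\emph{Odd part.} By Proposition \ref{prop:decomp}(i), $L_{\mu,\ep}\ge 0$ on $X^{\mu,\ep}_{od}$ with kernel $\operatorname{span}\{\partial_z\rho_{\mu,\ep}\}$, and $L_{\mu,\ep}\ge\delta$ on a complement $X^{\mu,\ep}_{+,od}$. That complement need not be the $X_1$-orthogonal one, so I pass to the orthogonal complement by the standard argument. Write $\delta\rho_{od}=b\,\partial_z\rho_{\mu,\ep}+\xi$ with $\xi\in X^{\mu,\ep}_{+,od}$. Then $\ip{L\delta\rho_{od},\delta\rho_{od}}=\ip{L\xi,\xi}\ge\delta\|\xi\|^2$. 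The orthogonality $\delta\rho_{od}\perp\partial_z\rho_{\mu,\ep}$ gives $|b|\,\|\partial_z\rho_{\mu,\ep}\|^2=|\ip{\xi,\partial_z\rho_{\mu,\ep}}|\le\|\xi\|\,\|\partial_z\rho_{\mu,\ep}\|$. Hence $\|\delta\rho_{od}\|\le 2\|\xi\|$, and
$$\ip{L\delta\rho_{od},\delta\rho_{od}}\ge\tfrac{\delta}{4}\|\delta\rho_{od}\|^2.$$

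\emph{Even part.} This is where the rotation term has to be used. Write $\delta\rho_{ev}=a f_{\mu,\ep}+\eta$ with $\eta\in X^{\mu,\ep}_{+,ev}$. Since $f_{\mu,\ep}\in R(B_1^{\mu,\ep})$, also $\eta\in R(B_1^{\mu,\ep})$. Using $K=G+2\pi\ep^2T$, Lemma \ref{lem:key} (which kills every term of $G$ containing $f_{\mu,\ep}$), and $G\ge L$, I get
$$\ip{K\delta\rho_{ev},\delta\rho_{ev}}=\ip{G\eta,\eta}+2\pi\ep^2\ip{T(af+\eta),af+\eta}\ge\delta\|\eta\|^2+2\pi\ep^2\,\|af+\eta\|_T^2,$$
where $\|\cdot\|_T$ is the seminorm $\ip{T\cdot,\cdot}^{1/2}$. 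By \eqref{Ff}, $\|f\|_T^2=I_{\mu,\ep}/(2\pi)=:\tau^2>0$. Boundedness of $T$ gives $\|\eta\|_T\le C_T\|\eta\|$, and therefore $\|af+\eta\|_T\ge(|a|\tau-C_T\|\eta\|)_+$. Now split into two cases.
\begin{itemize}
\item[(a)] If $C_T\|\eta\|\le|a|\tau/2$, the second term is at least $\tfrac{\pi}{2}\ep^2\tau^2a^2$.
\item[(b)] Otherwise $a^2\le 4C_T^2\|\eta\|^2/\tau^2$, so the first term $\delta\|\eta\|^2$ already controls $a^2$.
\end{itemize}
In both cases the lower bound is at least $c(\ep)\,(a^2+\|\eta\|^2)$. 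Since $\|\delta\rho_{ev}\|^2\le 2(a^2\|f\|^2+\|\eta\|^2)$, this yields $\ip{K\delta\rho_{ev},\delta\rho_{ev}}\ge\delta''\|\delta\rho_{ev}\|^2$ with $\delta''\sim\ep^2$.

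\emph{Conclusion.} Combining the odd and even bounds gives (ii) with $\delta'=\min(\delta/4,\delta'')$. For (i), nonnegativity is Theorem \ref{thm:reduced}. Since $\partial_z\rho_{\mu,\ep}$ is odd, its $T$-term vanishes and $K_{\mu,\ep}\partial_z\rho_{\mu,\ep}=L_{\mu,\ep}\partial_z\rho_{\mu,\ep}=0$, so $\partial_z\rho_{\mu,\ep}$ lies in the kernel. Conversely, any kernel element, after subtracting its $\partial_z\rho_{\mu,\ep}$-component, lies in $\widetilde R^{\mu,\ep}$ and is annihilated by $K$; by (ii) it must vanish.

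The main obstacle is uniformity of $\delta'$ in $\mu\in[\mu_0,\mu_1]$. This needs $\delta$ from Proposition \ref{prop:decomp} to be uniform. It also needs $\tau^2=I_{\mu,\ep}/(2\pi)$, $C_T$ and $\|f_{\mu,\ep}\|_{X_1}$, $\|\partial_z\rho_{\mu,\ep}\|_{X_1}$ to be bounded above and below uniformly. I would obtain these bounds from the continuity of $(\mu,\ep)\mapsto\rho_{\mu,\ep}$ in Theorem \ref{thm:existence}, together with the boundary behaviour \eqref{bdry}, which controls the weight $\Phi''(\rho_{\mu,\ep})$ and the denominator $\int_{\R}\rho_{\mu,\ep}\,dz$ in $T$.
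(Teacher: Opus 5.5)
Your proof is correct and follows essentially the paper's argument: the same decomposition $\delta\rho=af_{\mu,\ep}+\eta+\zeta$ modulo the translation mode, Lemma \ref{lem:key} to kill every $G_{\mu,\ep}$-term involving $f_{\mu,\ep}$, the gap $\delta$ on the positive even and odd complements, the positive contribution $2\pi\ep^2\ip{T_{\mu,\ep}f_{\mu,\ep},f_{\mu,\ep}}=\ep^2 I_{\mu,\ep}$ in the $f_{\mu,\ep}$-direction, and the same kernel argument for (i).

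The differences are technical and in your favour. Keeping $2\pi\ep^2\|af_{\mu,\ep}+\eta\|_T^2$ as a full square with a case split replaces the paper's Young-inequality step, which needs the extra smallness $\ep^2\le\delta c_1/C_1^2$. In addition, your bounds $\|\delta\rho_{od}\|\le 2\|\xi\|$ and $\|\delta\rho_{ev}\|^2\le 2(a^2\|f_{\mu,\ep}\|^2+\|\eta\|^2)$ avoid the pairwise $X_1$-orthogonality of the decomposition, which the paper asserts but Proposition \ref{prop:decomp} does not state.
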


\begin{proof}
\emph{Step 1 (decomposition).} By Proposition \ref{prop:decomp} and
\eqref{negeve}, the even subspace decomposes as
$X^{\mu,\ep}_{ev}=\operatorname{span}\{f_{\mu,\ep}\}\oplus X^{\mu,\ep}_{+,ev}$
($X_1$-orthogonal), where $f_{\mu,\ep}$ is the self-similar direction
\eqref{deff}, and $L_{\mu,\ep}|_{X^{\mu,\ep}_{+,ev}}\ge\delta>0$. By Proposition
\ref{prop:decomp}(i), the odd subspace decomposes as
$X^{\mu,\ep}_{od}=\operatorname{span}\{\partial_z\rho_{\mu,\ep}\}\oplus
X^{\mu,\ep}_{+,od}$ with $L_{\mu,\ep}|_{X^{\mu,\ep}_{+,od}}\ge\delta>0$.
Since $f_{\mu,\ep}\in R(B_1^{\mu,\ep})$ and
$\partial_z\rho_{\mu,\ep}\in R(B_1^{\mu,\ep})$, and the subspaces
$\operatorname{span}\{f_{\mu,\ep}\}$,
$X^{\mu,\ep}_{+,ev}\cap R(B_1^{\mu,\ep})$,
$\operatorname{span}\{\partial_z\rho_{\mu,\ep}\}$ and
$X^{\mu,\ep}_{+,od}\cap R(B_1^{\mu,\ep})$ are pairwise $X_1$-orthogonal, every
$\delta\rho\in R(B_1^{\mu,\ep})$ admits the unique decomposition
\begin{equation}
\label{decompR}
\delta\rho=a f_{\mu,\ep}+\eta+\zeta,\qquad
a\in\R,\ \ \eta\in X^{\mu,\ep}_{+,ev}\cap R(B_1^{\mu,\ep}),\ \
\zeta\in X^{\mu,\ep}_{+,od}\cap R(B_1^{\mu,\ep}),
\end{equation}
and $\delta\rho\in\widetilde R^{\mu,\ep}$ if and only if the coefficient of
$\partial_z\rho_{\mu,\ep}$ vanishes, i.e. \eqref{decompR} holds as written.

\emph{Step 2 (quadratic form estimates).} We estimate the three contributions in
\eqref{decompR}. Recall from the proof of Theorem \ref{thm:reduced}, Step 5,
\begin{equation}
\label{Kff}
\ip{K_{\mu,\ep}f_{\mu,\ep},f_{\mu,\ep}}=\ep^2 I_{\mu,\ep}>0,
\end{equation}
where $I_{\mu,\ep}=\int_{\R^3}\rho_{\mu,\ep}(x)r^2\,dx>0$. Next, since
$f_{\mu,\ep}$ is a reduced kernel direction of $G_{\mu,\ep}$
(Lemma \ref{lem:key}) and $\eta\in R(B_1^{\mu,\ep})$,
\begin{equation}
\label{Kfeta}
\ip{K_{\mu,\ep}f_{\mu,\ep},\eta}
=\ip{G_{\mu,\ep}f_{\mu,\ep},\eta}+2\pi\ep^2\ip{T_{\mu,\ep}f_{\mu,\ep},\eta}
=2\pi\ep^2\ip{T_{\mu,\ep}f_{\mu,\ep},\eta},
\end{equation}
so that, with $C_0:=2\pi\ep^2\|T_{\mu,\ep}\|\,\|f_{\mu,\ep}\|_{X_1}\cdot \ep^{-2}
=2\pi\|T_{\mu,\ep}\|\,\|f_{\mu,\ep}\|_{X_1}$ independent of $\ep$,
\begin{equation}
\label{Kfeta2}
\left|\ip{K_{\mu,\ep}f_{\mu,\ep},\eta}\right|\le C_0\ep^2\,\|\eta\|_{X_1}.
\end{equation}
Moreover, by $K_{\mu,\ep}\ge L_{\mu,\ep}$ on $X^{\mu,\ep}_1$ (see \eqref{KGeq})
and Proposition \ref{prop:decomp},
\begin{equation}
\label{Keta}
\ip{K_{\mu,\ep}\eta,\eta}\ge\ip{L_{\mu,\ep}\eta,\eta}\ge\delta\|\eta\|_{X_1}^2,
\qquad
\ip{K_{\mu,\ep}\zeta,\zeta}\ge\ip{L_{\mu,\ep}\zeta,\zeta}\ge\delta\|\zeta\|_{X_1}^2,
\end{equation}
and the mixed terms vanish:
\begin{align*}
\ip{K_{\mu,\ep}f_{\mu,\ep},\zeta}&=0
\qquad(K_{\mu,\ep}=L_{\mu,\ep}\text{ on }X^{\mu,\ep}_{od}\text{ and }
f_{\mu,\ep}\perp_{X_1}X^{\mu,\ep}_{od}),\\
\ip{K_{\mu,\ep}\eta,\zeta}&=0
\qquad(F_{\eta}=F_{\zeta}\text{-type vanishing: }T\text{ is diagonal with respect}
\\
&\qquad\text{to the even/odd splitting and }L\text{ preserves it}).
\end{align*}
Consequently, for $\delta\rho=af_{\mu,\ep}+\eta+\zeta$ as in \eqref{decompR},
\begin{align}
\label{Kdec}
\ip{K_{\mu,\ep}\delta\rho,\delta\rho}
&=a^2\ip{K_{\mu,\ep}f_{\mu,\ep},f_{\mu,\ep}}
+2a\ip{K_{\mu,\ep}f_{\mu,\ep},\eta}
+\ip{K_{\mu,\ep}\eta,\eta}+\ip{K_{\mu,\ep}\zeta,\zeta}\\
\label{Kdec2}
&\ge \ep^2 I_{\mu,\ep}a^2-2C_0\ep^2|a|\,\|\eta\|_{X_1}
+\delta(\|\eta\|_{X_1}^2+\|\zeta\|_{X_1}^2).
\end{align}

\emph{Step 3 (the uniform lower bound).} Normalize $\|\delta\rho\|_{X_1}=1$, and
set
$$
u:=|a|\,\|f_{\mu,\ep}\|_{X_1}\in[0,1],\qquad
v:=\|\eta\|_{X_1},\qquad
w:=\|\zeta\|_{X_1},
$$
so that $u^2+v^2+w^2=1$. Since
$I_{\mu,\ep}=2\pi\int_0^{R_{\mu,\ep}}r^3\int_{\R}\rho_{\mu,\ep}\,dz\,dr>0$, there
exists $c_1>0$ (independent of $\mu,\ep$) with
$\ep^2 I_{\mu,\ep}a^2=\ep^2 c_1 u^2$, where
$c_1:=I_{\mu,\ep}/\|f_{\mu,\ep}\|_{X_1}^2$. From \eqref{Kdec2},
$$
\ip{K_{\mu,\ep}\delta\rho,\delta\rho}
\ge \ep^2 c_1 u^2 - C_1\ep^2 u v
+\delta(v^2+w^2),
$$
where $C_1:=2C_0/\|f_{\mu,\ep}\|_{X_1}$ is independent of $\ep$. By Young's
inequality,
$$
C_1\ep^2uv\le \frac{\delta}{2}v^2+\frac{C_1^2\ep^4}{2\delta}u^2,
$$
so that, for $\ep^2\le \delta c_1/C_1^2$,
$$
\ip{K_{\mu,\ep}\delta\rho,\delta\rho}
\ge \ep^2\left(c_1-\frac{C_1^2\ep^2}{2\delta}\right)u^2
+\frac{\delta}{2}v^2+\delta w^2
\ge \frac{\ep^2c_1}{2}u^2+\frac{\delta}{2}(v^2+w^2)
\ge \delta'(u^2+v^2+w^2)
=\delta'\,\|\delta\rho\|_{X_1}^2,
$$
with
$$
\delta':=\min\left\{\frac{\ep^2c_1}{2},\frac{\delta}{2}\right\}>0.
$$
Hence, for $\ep>0$ small enough (depending only on $\delta$, $C_1$, $c_1$, i.e. only
on $\mu_0,\mu_1$),
$$
\ip{K_{\mu,\ep}\delta\rho,\delta\rho}\ge \delta'\,\|\delta\rho\|_{X_1}^2
\qquad\forall\,\delta\rho\in\widetilde R^{\mu,\ep},
$$
which proves \eqref{strictpos} by homogeneity.

\emph{Step 4 (kernel, by contradiction).} We have
$K_{\mu,\ep}\partial_z\rho_{\mu,\ep}=0$, so
$\operatorname{span}\{\partial_z\rho_{\mu,\ep}\}\subset
\ker(K_{\mu,\ep}|_{R(B_1^{\mu,\ep})})$. Conversely, let
$\delta\rho\in R(B_1^{\mu,\ep})$ with $K_{\mu,\ep}\delta\rho=0$. Write
$\delta\rho=b\,\partial_z\rho_{\mu,\ep}+\delta\rho'$ with
$\delta\rho'\in\widetilde R^{\mu,\ep}$. Since
$\partial_z\rho_{\mu,\ep}\in\ker K_{\mu,\ep}$ and $K_{\mu,\ep}$ is self-dual,
$\ip{K_{\mu,\ep}\partial_z\rho_{\mu,\ep},\delta\rho'}=0$, hence
$0=\ip{K_{\mu,\ep}\delta\rho,\delta\rho}
=\ip{K_{\mu,\ep}\delta\rho',\delta\rho'}\ge\delta'\|\delta\rho'\|_{X_1}^2$,
so $\delta\rho'=0$ and $\delta\rho\in\operatorname{span}\{\partial_z\rho_{\mu,\ep}\}$.
This proves (i), completing the proof.
\end{proof}

\begin{remark}
\label{rem:cont}
The lower bound \eqref{strictpos} is of order $\min\{\delta,\ep^2\}$. Its positivity
is a genuinely rotational effect: for $\ep=0$, the non-rotating supermassive star is
only marginally stable and
$L_{\mu,0}|_{\widetilde R^{\mu,0}}$ has the reduced neutral direction
$\partial_\mu\rho_{\mu,0}$ (since $\partial_\mu M_{\mu,0}=0$ by \eqref{MRscaling}),
on which the quadratic form vanishes. By the continuity of
$\rho_{\mu,\ep}$ in $\ep$ (Theorem \ref{thm:existence}) and the computation
\eqref{Kff}, the rotation lifts this neutral direction to a positive direction of
size $\ep^2I_{\mu,\ep}>0$, while the strictly positive directions (spectral gap
$\delta>0$ from Proposition \ref{prop:decomp}) persist. This explains, a posteriori,
the structure of the lower bound \eqref{strictpos}; an alternative proof of
\eqref{strictpos} can be obtained by contradiction along $\ep\to0$ using this
continuity, and the contradiction is provided precisely by the strict positivity of
the $\ep^2I_{\mu,\ep}$-term in \eqref{Kff}.
\end{remark}

\subsection{The rotating star family is continuous in $\gamma$}
\label{sec:gamma-family}

For $\gamma\in(6/5,2)$ let $\rho^{\gamma}_{\mu,\ep}$ denote the admissible solution
of the steady Euler--Poisson equations with polytropic index $\gamma$ and uniform
angular velocity $\ep$,
\begin{equation}
\label{SteadyGamma}
-\frac{1}{2}\ep^2 r^2+\frac{\gamma}{\gamma-1}A\big(\rho^{\gamma}_{\mu,\ep}\big)^{\gamma-1}
+V^{\gamma}_{\mu,\ep}+c^{\gamma}_{\mu,\ep}=0,
\qquad \rho^{\gamma}_{\mu,\ep}(0,0)=\mu,
\end{equation}
where $V^{\gamma}_{\mu,\ep}=-|x|^{-1}\ast\rho^{\gamma}_{\mu,\ep}$ and
$c^{\gamma}_{\mu,\ep}$ is the corresponding constant; for $\gamma=4/3$ we have
$\rho^{4/3}_{\mu,\ep}=\rho_{\mu,\ep}$. The existence for each fixed
$\gamma\in(6/5,2)$ and $\ep$ small is due to \cite{JM2019}; the following lemma
records the continuity of the family in $\gamma$ near the critical exponent, which is
all that is needed below.

\begin{lemma}[Continuity of the family in $\gamma$]
\label{lem:gammacont}
Fix $\mu\in[\mu_0,\mu_1]$ and $\ep\in(0,\ep_0)$, and let $\rho_{\mu,\ep}$ be the
admissible solution of \eqref{Steady} ($\gamma=4/3$) given by Theorem
\ref{thm:existence}. Then there exist $R_\infty>0$ with
$\overline{\bOmega_{\mu,\ep}}\Subset B(R_\infty)$, a constant
$\delta_1=\delta_1(\mu_0,\mu_1,\ep)>0$, and a $C^1$ map
\begin{equation}
\label{wgamma}
\gamma\longmapsto w^\gamma\in E,\qquad
\gamma\in(4/3-\delta_1,\,4/3+\delta_1),
\end{equation}
where $E$ is the Banach space
\begin{equation}
\label{defE}
E:=\big\{w\in C(\overline{B(R_\infty)})\;:\;
w\ \text{axi-symmetric},\ w(r,-z)=w(r,z),\ w(0,0)=0\big\},
\qquad \|w\|_E:=\|w\|_{L^\infty(B(R_\infty))},
\end{equation}
with $w^{4/3}=\bar w:=\bar u-\Phi'(\mu)$, $\bar u$ being the \emph{extended
enthalpy}
\begin{equation}
\label{extendedu}
\bar u(x):=\frac12\ep^2 r^2+K(\rho_{\mu,\ep})(x)-K(\rho_{\mu,\ep})(0)
+\Phi'(\mu),\qquad K(\sigma):=|x|^{-1}\ast\sigma,
\end{equation}
such that for every $\gamma\in(4/3-\delta_1,4/3+\delta_1)$ the density
\begin{equation}
\label{rhogamma}
\rho^{\gamma}_{\mu,\ep}:=q_\gamma\big(\Phi'_\gamma(\mu)+w^\gamma\big),
\qquad
q_\gamma(u):=\Big(\frac{\gamma-1}{\gamma A}\Big)^{\frac{1}{\gamma-1}}
(u\vee0)^{\frac{1}{\gamma-1}},\qquad
\Phi'_\gamma(\rho):=\frac{\gamma}{\gamma-1}A\rho^{\gamma-1},
\end{equation}
extended by zero outside
$\bOmega^{\gamma}_{\mu,\ep}:=\{x\in B(R_\infty)\,:\,\Phi'_\gamma(\mu)+
w^\gamma(x)>0\}$, is the unique admissible solution of \eqref{SteadyGamma} with
parameters $(\mu,\ep)$ in a neighborhood of $\rho_{\mu,\ep}$; its free boundary
$\p\bOmega^{\gamma}_{\mu,\ep}=\{\Phi'_\gamma(\mu)+w^\gamma=0\}$ is a $C^1$
surface on which the physical vacuum boundary condition holds. Moreover:
\begin{itemize}
\item[(i)] the map $\gamma\mapsto\rho^{\gamma}_{\mu,\ep}\in
C(\overline{B(R_\infty)})$ is continuous; in particular
$\rho^{\gamma}_{\mu,\ep}\to\rho_{\mu,\ep}$ uniformly on
$\overline{B(R_\infty)}$ as $\gamma\to4/3$ (the convergence takes place on the
\emph{fixed} ball $B(R_\infty)$, each density being extended by zero outside its
own support);
\item[(ii)] the free boundaries converge,
$\p\bOmega^{\gamma}_{\mu,\ep}\to\p\bOmega_{\mu,\ep}$, in the Hausdorff metric,
and the supports $\bOmega^{\gamma}_{\mu,\ep}$ are uniformly contained in a
fixed compact subset of $B(R_\infty)$ for $|\gamma-4/3|<\delta_1$;
\item[(iii)] the boundary graph functions converge in $C^1$:
$R^{\gamma}_{\mu,\ep}\to R_{\mu,\ep}$ in $C^1([-1,1])$, where
$\p\bOmega^{\gamma}_{\mu,\ep}=\{R^{\gamma}_{\mu,\ep}(\zeta)\,
(\sqrt{1-\zeta^2},\zeta)\,:\,\zeta\in[-1,1]\}$ and similarly for
$\p\bOmega_{\mu,\ep}$.
\end{itemize}
\end{lemma}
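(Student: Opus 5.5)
The plan is to set the problem up as an implicit function theorem in the unknown $w$, with $\gamma$ as a parameter, following the Lichtenstein/Heilig/Jang--Makino formulation. For $w\in E$ define
\[
\mathcal F(w,\gamma):=w-\tfrac12\ep^2r^2-K\big(q_\gamma(\Phi'_\gamma(\mu)+w)\big)+K\big(q_\gamma(\Phi'_\gamma(\mu)+w)\big)(0),
\]
so that $\mathcal F(w,\gamma)=0$ with $w(0,0)=0$ is equivalent to \eqref{SteadyGamma} on the support, with central density $\mu$. At $\gamma=4/3$ the steady equation \eqref{Steady} gives $\bar u=\Phi'(\rho_{\mu,\ep})$ inside $\bOmega_{\mu,\ep}$, and $\bar u<0$ outside it. The negativity outside follows from the monotonicity in condition (I) of Definition \ref{def:admissible} and from superharmonicity of $\frac12\ep^2r^2+K(\rho)$ outside the support, with $\ep$ small. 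Hence $q_{4/3}(\bar u)=\rho_{\mu,\ep}$ and $\mathcal F(\bar w,4/3)=0$.

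\textbf{Step 1: regularity of $\mathcal F$.} I would first show that $\mathcal F$ is $C^1$ from $E\times(4/3-\delta_1,4/3+\delta_1)$ to $E$. The exponent $\frac1{\gamma-1}$ is close to $3>1$, so $s\mapsto(s\vee0)^{1/(\gamma-1)}$ is $C^1$ jointly in $(s,\gamma)$, and $\partial_\gamma$ only produces harmless $\log$ factors multiplied by powers that vanish at $s=0$. Since $K$ maps $L^\infty(B(R_\infty))$ compactly into $C(\overline{B(R_\infty)})$, it follows that $D_w\mathcal F(\bar w,4/3)=I-\mathcal K$, where
\[
\mathcal K h=K\big(q'_{4/3}(\bar u)h\big)-K\big(q'_{4/3}(\bar u)h\big)(0),
\]
is a compact perturbation of the identity.

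\textbf{Step 2: invertibility of the linearization (main obstacle).} By Fredholm theory it suffices to exclude a kernel of $I-\mathcal K$ in $E$, that is, a nonzero even $h$ with $h(0)=0$ and $h=\mathcal K h$. Such an $h$ gives $\sigma:=q'(\bar u)h=h/\Phi''(\rho_{\mu,\ep})$, which solves $L_{\mu,\ep}\sigma=\text{const}$ in $\bOmega_{\mu,\ep}$ and is even in $z$. At $\gamma=4/3$ this is exactly where the mass-critical degeneracy could appear: the scaling direction $f_{\mu,\ep}$ satisfies a similar equation \eqref{Lf}, but with an additional $r^2$ term that the central normalization does not allow. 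I would try the approach of \cite{JM2019}. At $\ep=0$, the linearized Lane--Emden operator with the normalization $h(0)=0$ is invertible on even functions; the radial part reduces to uniqueness for the linearized Lane--Emden ODE with zero central data, and the higher harmonics have no nonzero solutions. Invertibility then persists for small $\ep$ by continuity of $(\mu,\ep)\mapsto\rho_{\mu,\ep}$ from Theorem \ref{thm:existence}, uniformly on $[\mu_0,\mu_1]$; this uniformity is where $\delta_1(\mu_0,\mu_1,\ep)$ comes from. If that perturbation argument turns out to be awkward because $\ep$ is fixed, I would instead use it to choose $\ep_0$ small.

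\textbf{Step 3: the IFT and properties (i)--(iii).} Given Step 2, the implicit function theorem yields the $C^1$ curve $\gamma\mapsto w^\gamma$ together with local uniqueness. Property (i) follows because $q_\gamma(\Phi'_\gamma(\mu)+w^\gamma)$ depends continuously on $(\gamma,w^\gamma)$ in sup norm. For (ii) and (iii), I would use that $\Phi'_\gamma(\mu)+w^\gamma$ converges to $\bar u$ in $C^1$ near $\p\bOmega_{\mu,\ep}$, which holds since $K$ maps into $C^{1,\alpha}$. I would then combine this with the nondegeneracy $\partial_R\bar u<0$ on $\p\bOmega_{\mu,\ep}$, which is Hopf's lemma applied to the superharmonic $\bar u$ near the boundary, and with (I). The implicit function theorem in $R$ then writes the zero set as a graph $R^\gamma_{\mu,\ep}(\zeta)$ converging in $C^1$. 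This gives Hausdorff convergence and uniform containment, and it also gives the physical vacuum behaviour $\rho^\gamma\sim\operatorname{dist}^{1/(\gamma-1)}$, since $\nabla(\Phi'_\gamma(\mu)+w^\gamma)\ne0$ on the boundary.
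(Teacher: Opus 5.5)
Your proposal is sound in its main line, but it reaches the key kernel condition by a different route than the paper. The architecture is the same: the map $\mathcal F$ on a fixed ball, the compact linearization $I-\mathcal K$, the implicit function theorem, and $C^1$ convergence of the enthalpies for (ii)--(iii).

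\textbf{How $\ker(I-\mathcal K)$ is excluded.} The paper stays at the given $\ep>0$:
\begin{itemize}
\item from $h=\mathcal Kh$ it forms $\sigma=h/\Phi''(\rho_{\mu,\ep})\in X^{\mu,\ep}_{ev}$ with $L_{\mu,\ep}\sigma=d$ constant;
\item Proposition \ref{prop:decomp}(ii) shows $L_{\mu,\ep}$ is an isomorphism on the even subspace;
\item it identifies $L_{\mu,\ep}^{-1}1=-\p_\mu\rho_{\mu,\ep}/\p_\mu c_{\mu,\ep}$;
\item it kills $d$ by comparing $\sigma(0,0)=0$ with $\p_\mu\rho_{\mu,\ep}(0,0)=1$, using $\p_\mu c_{\mu,\ep}\ne0$.
\end{itemize}
You instead prove injectivity at $\ep=0$ by spherical harmonics. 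Regular radial solutions are fixed by their central value; the even $l\ge2$ sectors carry no kernel by the classical Lane--Emden nondegeneracy; and $l=1$ is excluded by $z$-parity. You then propagate invertibility to small $\ep$ by operator-norm continuity of $\mathcal K$, shrinking $\ep_0$. This is valid and more elementary: it is essentially the classical Heilig--Jang--Makino verification and needs neither the $\ep>0$ spectral decomposition nor $\p_\mu c_{\mu,\ep}\ne0$. The paper's version instead reuses spectral information it needs anyway. It also shows directly why mass-criticality is harmless: the even solutions of $L_{\mu,\ep}\sigma=\mathrm{const}$ are multiples of $\p_\mu\rho_{\mu,\ep}$, which the central normalization excludes.

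\textbf{A step that is wrong as written.} Since $\Delta\bar u=2\ep^2-4\pi\rho_{\mu,\ep}$, $\bar u$ is \emph{sub}harmonic outside $\bOmega_{\mu,\ep}$, not superharmonic. It is also subharmonic in the inner layer where $\rho_{\mu,\ep}<\ep^2/(2\pi)$. Hence neither $\bar u<0$ off the star nor $\p_R\bar u<0$ on $\p\bOmega_{\mu,\ep}$ follows from your superharmonicity and interior-Hopf reasoning.

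Moreover, $\bar u\ge\frac12\ep^2r^2-c_{\mu,\ep}\to+\infty$ along the equatorial plane. So $\bar u<0$ on $\overline{B(R_\infty)}\setminus\bOmega_{\mu,\ep}$ holds only for a suitably bounded $R_\infty$; the paper works in the region $\ep^2|x|^3\le 2M_{\mu,\ep}$. You need this negativity for $q_{4/3}(\bar u)=\rho_{\mu,\ep}$, hence for $\mathcal F(\bar w,4/3)=0$.

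\textbf{Repair within your own perturbative framework.}
\begin{itemize}
\item $\bar u_{\mu,\ep}\to\bar u_{\mu,0}$ in $C^1(\overline{B(R_\infty)})$ as $\ep\to0$, because $\|\rho_{\mu,\ep}-\rho_{\mu,0}\|_{L^\infty}\le C\ep$.
\item Outside the Lane--Emden star, $\bar u_{\mu,0}=M_{\mu,0}/|x|-M_{\mu,0}/R_{\mu,0}$, with $\p_R\bar u_{\mu,0}=-M_{\mu,0}/R_{\mu,0}^2<0$ at the boundary.
\item Fix $R_\infty>R_{\mu,0}$ and take $\ep$ small. This gives both the negativity on the shell and the uniform bound $\p_R\bar u\le -c<0$ on $\p\bOmega_{\mu,\ep}$ that your Step 3 relies on.
\end{itemize}
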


\begin{proof}
The point of the formulation is that the unknown is the \emph{enthalpy}, not the
density: the free boundary is then encoded as the zero level set of the enthalpy
on a \emph{fixed} domain, which is exactly the framework of \cite{JM2019}. We
divide the proof into four steps.

\emph{Step 1 (enthalpy reformulation on a fixed ball).}
Note that $q_\gamma$ is the inverse of $\Phi'_\gamma$ on $(0,\infty)$, extended
by $0$ on $(-\infty,0]$. With $u^\gamma:=\Phi'_\gamma(\mu)+w^\gamma$, the steady
equation \eqref{SteadyGamma} on $\{u^\gamma>0\}$ reads, since
$V^{\gamma}_{\mu,\ep}=-K(\rho^{\gamma}_{\mu,\ep})$,
$$
u^\gamma=\frac12\ep^2r^2+K(\rho^{\gamma}_{\mu,\ep})-c^{\gamma}_{\mu,\ep}
\qquad\text{on }\{u^\gamma>0\},
$$
and evaluating at the center $x=0$, where $u^\gamma(0)=\Phi'_\gamma(\mu)$, gives
$c^{\gamma}_{\mu,\ep}=K(\rho^{\gamma}_{\mu,\ep})(0)-\Phi'_\gamma(\mu)$ (this is
consistent with the definition $c=-V(0,Z)$ in \eqref{Steady} once the equation
holds on the whole support, in particular at the pole). Hence \eqref{SteadyGamma}
is equivalent to the integral equation on the whole fixed ball
\begin{equation}
\label{inteq}
u^\gamma=\frac12\ep^2r^2+K\big(q_\gamma(u^\gamma)\big)
-K\big(q_\gamma(u^\gamma)\big)(0)+\Phi'_\gamma(\mu)
\qquad\text{on }\overline{B(R_\infty)},
\end{equation}
which determines the enthalpy also in the vacuum region, where $u^\gamma<0$ and
$q_\gamma(u^\gamma)=0$: the support of the density is the positivity set of
$u^\gamma$.
We now fix $R_\infty$. The extended enthalpy $\bar u$ of \eqref{extendedu} is
$C^1$ on $\R^3$ (since $K(\rho_{\mu,\ep})\in C^1(\R^3)$), coincides with
$\Phi'(\rho_{\mu,\ep})$ on $\overline{\bOmega_{\mu,\ep}}$ by \eqref{Steady},
satisfies $\bar u(0)=\Phi'(\mu)>0$, and $\p\bar u/\p N<0$ on
$\p\bOmega_{\mu,\ep}$ by the admissibility condition (I) of Definition
\ref{def:admissible}; in particular $\bar u<0$ immediately outside
$\overline{\bOmega_{\mu,\ep}}$. Moreover
$\p_r\bar u=\ep^2r+\p_rK(\rho_{\mu,\ep})<0$ on
$\{x\,:\,|x|\ge R_*,\ \ep^2|x|^3\le 2M_{\mu,\ep}\}$, where
$\overline{\bOmega_{\mu,\ep}}\subset B(R_*)$, since outside the support
$|\p_rK(\rho_{\mu,\ep})|\ge M_{\mu,\ep}/(2|x|^2)$. Hence, after shrinking
$\ep_0$ if necessary, we may fix $R_\infty>R_*$ once and for all such that
\begin{equation}
\label{ubarneg}
\bar u<0\ \ \text{on the compact shell }
\overline{B(R_\infty)}\setminus\bOmega_{\mu,\ep},
\qquad \bar u\le -c_0<0\ \ \text{near }\p B(R_\infty).
\end{equation}

\emph{Step 2 (the $C^1$ map).}
Define $\mathfrak F:(6/5,2)\times E\to E$ by
\begin{equation}
\label{defFgamma}
\mathfrak F(\gamma,w)
:=w-\frac12\ep^2r^2-K\big(q_\gamma(\Phi'_\gamma(\mu)+w)\big)
+K\big(q_\gamma(\Phi'_\gamma(\mu)+w)\big)(0).
\end{equation}
This is well defined: $q_\gamma(\Phi'_\gamma(\mu)+w)$ is bounded and continuous,
so its Newtonian potential is $C^1$ on $\R^3$, and
$\mathfrak F(\gamma,w)(0)=0$. By \cite[Lemmas 1 and 2]{JM2019} (the pointwise
estimate $\big|((u+h)\vee0)^\nu-(u\vee0)^\nu-\nu(u\vee0)^{\nu-1}h\big|
\le C|h|^{\nu\wedge2}$ and the continuity of the derivative), applied with
$\nu=1/(\gamma-1)\in(2,5)$ for $\gamma$ near $4/3$, the Nemytskii map
$w\mapsto q_\gamma(\Phi'_\gamma(\mu)+w)$ is $C^1$ on $E$ with derivative
$h\mapsto q'_\gamma(\Phi'_\gamma(\mu)+w)\,h$; the dependence on $\gamma$ is
smooth, uniformly on bounded subsets of $E$. Hence $\mathfrak F$ is $C^1$ in a
neighborhood of $(4/3,\bar w)$, and $\mathfrak F(4/3,\bar w)=0$ by Step 1. Its
partial derivative is
\begin{equation}
\label{DF}
D_w\mathfrak F(4/3,\bar w)=I-\mathcal K,\qquad
\mathcal Kh:=K\big(q'_{4/3}(\bar u)\,h\big)
-K\big(q'_{4/3}(\bar u)\,h\big)(0),
\end{equation}
where $q'_{4/3}(\bar u)=\frac{1}{\Phi''(\rho_{\mu,\ep})}
=\frac{3}{4A}\rho_{\mu,\ep}^{2/3}$ on $\bOmega_{\mu,\ep}$ and
$q'_{4/3}(\bar u)=0$ outside (since $\bar u<0$ there by \eqref{ubarneg}); in
particular, multiplication by $q'_{4/3}(\bar u)$ is bounded on $E$. Since $K$
maps $L^\infty(B(R_\infty))$ boundedly into $C^1(\overline{B(R_\infty)})$, which
is compactly embedded in $C(\overline{B(R_\infty)})$, the operator
$\mathcal K:E\to E$ is compact.

\emph{Step 3 (the kernel condition (HL)).}
We claim that $\ker(I-\mathcal K)=\{0\}$. Let $h\in E$ satisfy $h=\mathcal Kh$.
Then $h(0,0)=0$ and, defining
\begin{equation}
\label{sigmah}
\sigma:=q'_{4/3}(\bar u)\,h=
\frac{h}{\Phi''(\rho_{\mu,\ep})}\ \text{ on }\bOmega_{\mu,\ep},
\qquad \sigma:=0\ \text{ outside},
\end{equation}
we have $\sigma\in X^{\mu,\ep}_{1,ev}$: indeed
$$
\int_{\bOmega_{\mu,\ep}}\Phi''(\rho_{\mu,\ep})\sigma^2\,dx
=\int_{\bOmega_{\mu,\ep}}\frac{h^2}{\Phi''(\rho_{\mu,\ep})}\,dx<\infty,
$$
since $h$ is bounded and
$1/\Phi''(\rho_{\mu,\ep})=\frac{3}{4A}\rho_{\mu,\ep}^{2/3}$ is bounded on
$\bOmega_{\mu,\ep}$. Since $V(\sigma)=-K(\sigma)=-4\pi(-\Delta)^{-1}\sigma$, the
equation $h=\mathcal Kh$ reads
$$
L_{\mu,\ep}\sigma=\Phi''(\rho_{\mu,\ep})\sigma-K(\sigma)=-K(\sigma)(0)=:d\in\R
\qquad\text{on }\bOmega_{\mu,\ep},
$$
as an equality in $(X^{\mu,\ep}_{1,ev})^*$ (constants belong to
$(X^{\mu,\ep}_{1,ev})^*$, since
$1/\Phi''(\rho_{\mu,\ep})\in L^1(\bOmega_{\mu,\ep})$). By Proposition
\ref{prop:decomp}(ii) and \eqref{Lff}, the spectral decomposition
$X^{\mu,\ep}_{ev}=\operatorname{span}\{f_{\mu,\ep}\}\oplus X^{\mu,\ep}_{+,ev}$
with $\ip{L_{\mu,\ep}f_{\mu,\ep},f_{\mu,\ep}}<0$ and
$L_{\mu,\ep}|_{X^{\mu,\ep}_{+,ev}}\ge\delta>0$ shows that $0$ is not in the
spectrum of the form $\ip{L_{\mu,\ep}\cdot,\cdot}$ on $X^{\mu,\ep}_{ev}$; hence
$L_{\mu,\ep}:X^{\mu,\ep}_{1,ev}\to(X^{\mu,\ep}_{1,ev})^*$ is an isomorphism,
and $\sigma=d\,L_{\mu,\ep}^{-1}1$. Differentiating \eqref{Steady} in $\mu$ gives
$L_{\mu,\ep}\p_\mu\rho_{\mu,\ep}=-\p_\mu c_{\mu,\ep}$ (an equality of constant
functions in $(X^{\mu,\ep}_{1,ev})^*$), so
$L_{\mu,\ep}^{-1}1=-\p_\mu\rho_{\mu,\ep}/\p_\mu c_{\mu,\ep}$ and
$\sigma=-(d/\p_\mu c_{\mu,\ep})\,\p_\mu\rho_{\mu,\ep}$. Since
$\rho_{\mu,\ep}(0,0)=\mu$ for all $\mu$, we have
$\p_\mu\rho_{\mu,\ep}(0,0)=1$, while
$\sigma(0,0)=h(0,0)/\Phi''(\mu)=0$; as $\p_\mu c_{\mu,\ep}\ne0$ by Proposition
\ref{prop:decomp}(iii), this forces $d=0$. Then $L_{\mu,\ep}\sigma=0$, and since
$\ker L_{\mu,\ep}=\operatorname{span}\{\p_z\rho_{\mu,\ep}\}\subset
X^{\mu,\ep}_{od}$ by Proposition \ref{prop:decomp}(i) while
$\sigma\in X^{\mu,\ep}_{ev}$, we conclude $\sigma=0$, and hence
$h=\mathcal Kh=0$. By the Fredholm alternative,
$D_w\mathfrak F(4/3,\bar w)=I-\mathcal K$ is an isomorphism of $E$.

\emph{Step 4 (implicit function theorem and conclusions).}
By the implicit function theorem \cite[Theorem 15.1]{D1985} (in the formulation
recalled in \cite[Section 3]{JM2019}), there exist $\delta_1>0$ and a $C^1$ map
$\gamma\mapsto w^\gamma\in E$ on $(4/3-\delta_1,4/3+\delta_1)$ with
$w^{4/3}=\bar w$ and $\mathfrak F(\gamma,w^\gamma)=0$, locally unique. Set
$u^\gamma:=\Phi'_\gamma(\mu)+w^\gamma$ and
$\rho^{\gamma}_{\mu,\ep}:=q_\gamma(u^\gamma)$ as in \eqref{rhogamma}. Then
\eqref{inteq} holds, so on the support
$\bOmega^{\gamma}_{\mu,\ep}=\{u^\gamma>0\}$ the pair
$(\rho^{\gamma}_{\mu,\ep},
c^{\gamma}_{\mu,\ep}=K(\rho^{\gamma}_{\mu,\ep})(0)-\Phi'_\gamma(\mu))$ solves
\eqref{SteadyGamma}, and
$\rho^{\gamma}_{\mu,\ep}(0,0)=q_\gamma(\Phi'_\gamma(\mu))=\mu$. Moreover
$u^\gamma\to\bar u$ in $C^1(\overline{B(R_\infty)})$: the $C^0$ convergence
follows from $w^\gamma\to\bar w$ in $E$ and the uniform continuity of
$(\gamma,u)\mapsto q_\gamma(u)$ on compact sets, and the $C^1$ convergence then
follows from \eqref{inteq}, since $K$ maps bounded sequences converging in
$L^\infty$ to potentials converging in $C^1$. Together with \eqref{ubarneg} and
$\p\bar u/\p N<0$ on $\p\bOmega_{\mu,\ep}$, this gives, for $|\gamma-4/3|$ small
enough: $u^\gamma<0$ near $\p B(R_\infty)$, $\p_ru^\gamma<0$ on the outer shell,
the zero level set $\p\bOmega^{\gamma}_{\mu,\ep}$ is a $C^1$ surface converging
to $\p\bOmega_{\mu,\ep}$ in the Hausdorff metric, and
$-\infty<\p u^\gamma/\p N<0$ on it (the physical vacuum boundary condition).
Hence $\rho^{\gamma}_{\mu,\ep}$ is an admissible solution in the sense of
Definition \ref{def:admissible} ((I)--(II) follow from $\p_ru^\gamma<0$ on the
shell and the implicit function theorem applied locally to $u^\gamma(R,\zeta)=0$,
(III) from the continuity), unique among admissible solutions close to
$\rho_{\mu,\ep}$ by the local uniqueness in the implicit function theorem.
Moreover, since $\p_R\bar u(R_{\mu,\ep}(\zeta),\zeta)\le-c<0$ uniformly in
$\zeta\in[-1,1]$ by the physical vacuum boundary condition and the compactness
of $[-1,1]$, the implicit function theorem applied to the $C^1$ map
$(R,\zeta,\gamma)\mapsto u^\gamma(R,\zeta)$ shows that the boundary graphs
satisfy $R^{\gamma}_{\mu,\ep}\to R_{\mu,\ep}$ in $C^1([-1,1])$, which is (iii).
Assertion (i) follows from $w^\gamma\to\bar w$ in $E$, the uniform Lipschitz
continuity of $u\mapsto q_\gamma(u)$ on bounded sets (recall
$\nu_\gamma=1/(\gamma-1)>2$), and $q_\gamma\to q_{4/3}$ uniformly on compact
sets; assertion (ii) has been proved above.
\end{proof}

\begin{remark}
Two comments are in order. First, the kernel condition (HL) verified in Step 3
uses exactly the non-degeneracy provided by Proposition \ref{prop:decomp}: the
translation mode $\p_z\rho_{\mu,\ep}$, which spans $\ker L_{\mu,\ep}$, is odd in
$z$ and hence absent from the even function space, while the constant-function
direction is excluded by the normalization $h(0,0)=0$ together with
$\p_\mu c_{\mu,\ep}\ne0$ (Proposition \ref{prop:decomp}(iii)). No additional
hypothesis beyond Proposition \ref{prop:decomp} is needed. Second, the
continuity stated in (i) is with respect to the supremum norm on the
\emph{fixed} ball $B(R_\infty)$, all densities being extended by zero outside
their supports; since the supports $\bOmega^{\gamma}_{\mu,\ep}$ vary with
$\gamma$, a statement of the form
``$\rho^{\gamma}_{\mu,\ep}\to\rho_{\mu,\ep}$ in
$C(\overline{\bOmega_{\mu,\ep}})$'' would not be meaningful as it stands. The
enthalpy formulation \eqref{inteq} is precisely what allows the moving free
boundary to be treated on a fixed domain.
\end{remark}

\subsection{Stability for $\gamma$ close to $4/3$}
\label{sec:gamma-stab}

With Lemma \ref{lem:gammacont} and Theorem \ref{thm:strict} at hand, we can prove
the stability of the rotating stars for $\gamma$ in a left neighborhood of
$4/3$. The proof rests on two independent ingredients, which we isolate first:
a transport identifying the moving weighted spaces
$X^{\gamma}_1:=L^2_{\Phi''_\gamma(\rho^{\gamma}_{\mu,\ep})}$,
$\Phi''_\gamma(\rho)=\gamma A\rho^{\gamma-2}$, with the fixed space
$X_1:=X^{\mu,\ep}_1$ (Lemma \ref{lem:transport}), and the uniform convergence
of the pulled-back quadratic forms (Proposition \ref{prop:Kconv}).

\begin{lemma}[Transport to the fixed domain]
\label{lem:transport}
Let $\mu\in[\mu_0,\mu_1]$, $\ep\in(0,\ep_0)$ and let
$\rho^{\gamma}_{\mu,\ep}$, $|\gamma-4/3|<\delta_1$, be the family of Lemma
\ref{lem:gammacont}. Set $\bOmega:=\bOmega_{\mu,\ep}$,
$\bOmega^\gamma:=\bOmega^{\gamma}_{\mu,\ep}$,
$R(\zeta):=R_{\mu,\ep}(\zeta)$ and
$R^\gamma(\zeta):=R^{\gamma}_{\mu,\ep}(\zeta)$. Define the radial deformation
\begin{equation}
\label{defpull}
\phi_\gamma(x):=a_\gamma(\zeta)\,x,\qquad
a_\gamma(\zeta):=\frac{R^\gamma(\zeta)}{R(\zeta)},\qquad \zeta=\frac{x_3}{|x|},
\end{equation}
and, for $\sigma\in X^{\gamma}_1$,
\begin{equation}
\label{defUgamma}
(\mathcal U_\gamma\sigma)(x):=m_\gamma(x)\,\sigma(\phi_\gamma(x)),\qquad
m_\gamma(x)^2:=\frac{\Phi''_\gamma(\rho^{\gamma}_{\mu,\ep}(\phi_\gamma(x)))\,
a_\gamma(\zeta)^3}{\Phi''(\rho_{\mu,\ep}(x))},\qquad
b_\gamma(x):=\frac{a_\gamma(\zeta)^3}{m_\gamma(x)}.
\end{equation}
Then there exists $\delta_2\in(0,\delta_1]$ such that for all
$|\gamma-4/3|<\delta_2$ the following hold.
\begin{itemize}
\item[(i)] $\phi_\gamma:\overline{\bOmega}\to\overline{\bOmega^\gamma}$ is a
$C^1$ diffeomorphism, $\det D\phi_\gamma=a_\gamma^3$, and
$\|\phi_\gamma-\mathrm{id}\|_{C^1(\overline{\bOmega})}=O(|\gamma-4/3|)$; in
particular
\begin{equation}
\label{philip}
|\phi_\gamma(x)-\phi_\gamma(x')|=(1+o(1))\,|x-x'|
\qquad\text{uniformly in }x,x'\in\overline{\bOmega}.
\end{equation}
\item[(ii)] $\mathcal U_\gamma:X^{\gamma}_1\to X_1$ is unitary, and for every
$\sigma\in X^{\gamma}_1$ and $\tau:=\mathcal U_\gamma\sigma$,
\begin{equation}
\label{formpull}
\ip{L^{\gamma}_{\mu,\ep}\sigma,\sigma}
=\|\tau\|_{X_1}^2-\iint_{\bOmega\times\bOmega}
\tau(x)\tau(x')\,k_\gamma(x,x')\,dx\,dx',\qquad
k_\gamma(x,x'):=\frac{b_\gamma(x)\,b_\gamma(x')}
{|\phi_\gamma(x)-\phi_\gamma(x')|}.
\end{equation}
\item[(iii)] With $t(x):=\operatorname{dist}(x,\p\bOmega)$,
$\eta_\gamma:=|\gamma-4/3|^{1/4}$ and
$e(\gamma):=\frac{\gamma-2}{\gamma-1}+2=O(|\gamma-4/3|)$, there are constants
$c,C>0$ independent of $\gamma$ such that
\begin{equation}
\label{claimb}
c\,t(x)^{|e(\gamma)|/2}\le b_\gamma(x)\le C\,t(x)^{-|e(\gamma)|/2}
\quad(x\in\bOmega),\qquad
\sup_{t(x)\ge\eta_\gamma}|b_\gamma(x)-1|\longrightarrow0.
\end{equation}
\item[(iv)] The mass functional and the translation mode pull back as
\begin{equation}
\label{pullconstraints}
\int_{\bOmega^\gamma}\sigma\,dy=\ip{\tau,\ell_\gamma}_{X_1},\qquad
\ip{\sigma,\p_z\rho^{\gamma}_{\mu,\ep}}_{X^{\gamma}_1}
=\ip{\tau,p_\gamma}_{X_1},
\end{equation}
where $\ell_\gamma:=b_\gamma/\Phi''(\rho_{\mu,\ep})\to
\ell:=1/\Phi''(\rho_{\mu,\ep})$ in $X_1$ and
$p_\gamma:=m_\gamma\,\p_z\rho^{\gamma}_{\mu,\ep}\circ\phi_\gamma\to
p:=\p_z\rho_{\mu,\ep}$ in $X_1$.
\end{itemize}
\end{lemma}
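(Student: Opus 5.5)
The plan is to prove the four items in order. Each one rests on the $C^1$ convergence $R^\gamma\to R$ from Lemma \ref{lem:gammacont}(iii) and on the enthalpy representation $\rho^\gamma_{\mu,\ep}=q_\gamma(u^\gamma)$ with $u^\gamma\to\bar u$ in $C^1$.

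\emph{Item (i).} Since $R$ and $R^\gamma$ are $C^1$ and bounded below by $R_0>0$, the ratio $a_\gamma$ is $C^1$ on $[-1,1]$ with $\|a_\gamma-1\|_{C^1}\to0$. I will use the rate $O(|\gamma-4/3|)$, which comes from the $C^1$ dependence of $w^\gamma$ on $\gamma$ and the implicit function theorem for the boundary graph. The map $\phi_\gamma$ preserves rays and sends the radius $R(\zeta)$ to $R^\gamma(\zeta)$, so it is a bijection $\overline\bOmega\to\overline{\bOmega^\gamma}$. Because $a_\gamma$ depends only on $\zeta$, a computation in spherical coordinates gives $\det D\phi_\gamma=a_\gamma^3$. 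Near the origin $\phi_\gamma$ is only Lipschitz, since $D\zeta\sim 1/|x|$. However, $D\phi_\gamma-I=(a_\gamma-1)I+x\otimes\nabla a_\gamma$ with $|x\,\nabla a_\gamma|\lesssim\|a_\gamma'\|_\infty$, so $\|D\phi_\gamma-I\|_\infty=o(1)$. Integrating along segments then gives \eqref{philip}.

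\emph{Item (ii).} I will substitute $y=\phi_\gamma(x)$ with $dy=a_\gamma^3dx$. The weight $m_\gamma$ is chosen exactly so that $\int\Phi''_\gamma(\rho^\gamma)\sigma^2dy=\int\Phi''(\rho)\tau^2dx$, which gives isometry; invertibility of $\phi_\gamma$ gives surjectivity. For the Newtonian part, $4\pi\ip{(-\Delta)^{-1}\sigma,\sigma}=\iint\sigma(y)\sigma(y')|y-y'|^{-1}$. Writing $\sigma\circ\phi_\gamma=\tau/m_\gamma$ and $dy=a_\gamma^3dx$ produces the factor $b_\gamma=a_\gamma^3/m_\gamma$ in each variable, which is \eqref{formpull}.

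\emph{Item (iii).} This is the main obstacle, because it requires comparing $\rho^\gamma\circ\phi_\gamma$ with $\rho$ near the boundary, where both vanish with different exponents. Write $b_\gamma^2=a_\gamma^3\,\Phi''(\rho)/\Phi''_\gamma(\rho^\gamma\circ\phi_\gamma)$. Here $\Phi''(\rho)\approx (u)^{-2}$ and $\Phi''_\gamma(\rho^\gamma)\approx (u^\gamma)^{(\gamma-2)/(\gamma-1)}$, with $u=\bar u$, and the exponents differ by $e(\gamma)$. By the physical vacuum condition, $\bar u(x)\approx t(x)$. I will also show $u^\gamma(\phi_\gamma(x))\approx t(x)$ uniformly in $\gamma$, using that $\p_N u^\gamma$ is bounded away from $0$ and $-\infty$ uniformly and that $\phi_\gamma$ maps $\p\bOmega$ onto $\p\bOmega^\gamma$ with $C^1$-small distortion. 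The ratio then behaves like $t^{\pm e(\gamma)}$ times constants bounded above and below, which yields the two-sided bound. For the interior statement on $\{t\ge\eta_\gamma\}$, I will use the uniform convergence $u^\gamma\circ\phi_\gamma\to\bar u$ together with $\bar u\gtrsim\eta_\gamma$. The remaining discrepancy is $|u^\gamma\circ\phi_\gamma-\bar u|/\bar u\lesssim|\gamma-4/3|/\eta_\gamma\to0$, provided the $C^0$ rate is $O(|\gamma-4/3|)$, and $t^{e(\gamma)}\to1$ there since $|e|\log\eta_\gamma\to0$. The choice $\eta_\gamma=|\gamma-4/3|^{1/4}$ leaves room for a rate of $|\gamma-4/3|^{1/2}$ if the linear rate fails.

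\emph{Item (iv).} Changing variables gives $\int\sigma\,dy=\int\tau\,a_\gamma^3/m_\gamma\,dx=\ip{\tau,b_\gamma/\Phi''(\rho)}_{X_1}$, and similarly for the translation mode with $p_\gamma$ as defined. For the convergence $\ell_\gamma\to\ell$ in $X_1$, the quantity to control is $\int(b_\gamma-1)^2/\Phi''(\rho)$. The integrand tends to $0$ uniformly on $\{t\ge\eta_\gamma\}$. On the thin layer $\{t<\eta_\gamma\}$, the bound (iii) combined with $1/\Phi''(\rho)\approx t^2$ makes the integral $O(\eta_\gamma)$. For $p_\gamma\to p$, I will use $\p_z\rho^\gamma=q_\gamma'(u^\gamma)\p_zu^\gamma$ with $\p_zu^\gamma\to\p_z\bar u$ uniformly, and apply the same interior/boundary-layer splitting with the weight bounds from (iii).
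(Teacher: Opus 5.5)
Your proposal is correct and follows essentially the paper's route. It uses the ray-preserving deformation with $\det D\phi_\gamma=a_\gamma^3$ and $\|D\phi_\gamma-I\|_{L^\infty}\lesssim\|a_\gamma-1\|_{C^1}$, the change of variables $y=\phi_\gamma(x)$ for (ii) and the identities in (iv), the uniform physical-vacuum comparison with $t(x)$ for the two-sided bound in (iii), and an interior/boundary-layer split at $t=\eta_\gamma$ for the convergences in (iv). Your two deviations both improve on the paper: you get $b_\gamma\to1$ on $\{t\ge\eta_\gamma\}$ through the enthalpy ratio $u^\gamma\circ\phi_\gamma/\bar u$, which needs only $\|u^\gamma\circ\phi_\gamma-\bar u\|_{L^\infty}=o(\eta_\gamma)$ where the paper's density-ratio argument needs an $o(\eta_\gamma^3)$ rate, and you rightly note that $\phi_\gamma$ is only bi-Lipschitz at the origin (positively homogeneous but not linear there unless $a_\gamma$ is constant), so the ``$C^1$ diffeomorphism'' in (i) should be read as $C^1$ away from $0$ with $D\phi_\gamma\in L^\infty$, which is all the change of variables and the bi-Lipschitz bound require.
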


\begin{proof}
(i) By Lemma \ref{lem:gammacont}(iii), $a_\gamma\to1$ in $C^1([-1,1])$ (with
rate $O(|\gamma-4/3|)$, since $\gamma\mapsto w^\gamma$ is $C^1$), and
$R\ge R_0>0$ implies $a_\gamma\ge1/2$ for $\gamma$ close to $4/3$. Since
$D\phi_\gamma=a_\gamma I+a_\gamma'(\zeta)\,x(\nabla\zeta)^{T}$ and
$x\cdot\nabla\zeta=0$, the matrix determinant lemma gives
$\det D\phi_\gamma=a_\gamma^3$; moreover $|\nabla\zeta|\le2/|x|$ gives
$\|D\phi_\gamma-I\|_{L^\infty}\le3\|a_\gamma-1\|_{C^1}=O(|\gamma-4/3|)$, which
also yields \eqref{philip}.

(ii) This is the change of variables $y=\phi_\gamma(x)$: unitarity follows
from $\det D\phi_\gamma=a_\gamma^3$ and the definition of $m_\gamma$, and
\eqref{formpull} follows from
$\ip{L^{\gamma}_{\mu,\ep}\sigma,\sigma}
=\int\Phi''_\gamma(\rho^{\gamma}_{\mu,\ep})\sigma^2\,dy
-\iint\sigma(y)\sigma(y')|y-y'|^{-1}\,dy\,dy'$.

(iii) The physical vacuum boundary condition holds uniformly in $\gamma$: the
extended enthalpies of Lemma \ref{lem:gammacont} satisfy $u^\gamma\to\bar u$
in $C^1(\overline{B(R_\infty)})$ and $|\nabla\bar u|\ge2c_0>0$ on
$\p\bOmega$, hence $u^\gamma(y)\approx
\operatorname{dist}(y,\p\bOmega^\gamma)$ uniformly in $\gamma$. Since
$\phi_\gamma$ is bi-Lipschitz with constants $1+o(1)$ and maps $\p\bOmega$
onto $\p\bOmega^\gamma$, also
$\operatorname{dist}(\phi_\gamma(x),\p\bOmega^\gamma)\approx t(x)$ uniformly,
and with $\rho^{\gamma}_{\mu,\ep}=q_\gamma(u^\gamma)$,
\begin{equation}
\label{rhocompare}
\rho_{\mu,\ep}(x)\approx t(x)^3,\qquad
\rho^{\gamma}_{\mu,\ep}(\phi_\gamma(x))\approx t(x)^{\frac1{\gamma-1}}
\qquad\text{uniformly in }\gamma.
\end{equation}
Since $\frac{\gamma-2}{\gamma-1}=-2+e(\gamma)$, this gives
$b_\gamma(x)^2=a_\gamma(\zeta)^3\,\Phi''(\rho_{\mu,\ep}(x))/
\Phi''_\gamma(\rho^{\gamma}_{\mu,\ep}(\phi_\gamma(x)))
\approx t(x)^{-e(\gamma)}$, hence the two-sided bound in \eqref{claimb}.
For the uniform convergence on $\{t\ge\eta_\gamma\}$ write
$$
b_\gamma^2=a_\gamma^3\cdot\frac{4}{3\gamma}\cdot
\Big(\frac{\rho^{\gamma}_{\mu,\ep}\circ\phi_\gamma}{\rho_{\mu,\ep}}
\Big)^{2-\gamma}\cdot\rho_{\mu,\ep}^{\,4/3-\gamma}.
$$
Here $a_\gamma^3\to1$ and $4/(3\gamma)\to1$ uniformly; the ratio
$(\rho^{\gamma}_{\mu,\ep}\circ\phi_\gamma)/\rho_{\mu,\ep}$ is $1+o(1)$
uniformly on $\{t\ge\eta_\gamma\}$, because there both densities are
$\ge c\,\eta_\gamma^{\,3}=c\,|\gamma-4/3|^{3/4}$ while
$\|\rho^{\gamma}_{\mu,\ep}\circ\phi_\gamma-\rho_{\mu,\ep}\|_{L^\infty(\bOmega)}
=O(|\gamma-4/3|)$ by Lemma \ref{lem:gammacont}(i), \eqref{philip} and the
$C^1$ dependence of $w^\gamma$; and
$\rho_{\mu,\ep}^{\,4/3-\gamma}
=\exp\big((4/3-\gamma)\ln\rho_{\mu,\ep}\big)=1+o(1)$ uniformly on
$\{t\ge\eta_\gamma\}$, since $c\,\eta_\gamma^3\le\rho_{\mu,\ep}\le C$ there,
so $(4/3-\gamma)\ln\rho_{\mu,\ep}
=O\big(|\gamma-4/3|\,|\ln|\gamma-4/3||\big)$.

(iv) The identities \eqref{pullconstraints} follow from the change of
variables (the mass functional $\sigma\mapsto\int\sigma\,dy$ pulls back to the
$X_1$ inner product with $\ell_\gamma$, since
$\int_{\bOmega^\gamma}\sigma\,dy=\int_{\bOmega}\tau\,b_\gamma\,dx
=\ip{\tau,b_\gamma/\Phi''(\rho_{\mu,\ep})}_{X_1}$). The convergence
$\ell_\gamma\to\ell$ in $X_1$ follows from
\eqref{claimb} and $\Phi''(\rho_{\mu,\ep})^{-1}\approx t^2$:
$$
\|\ell_\gamma-\ell\|_{X_1}^2
=\int_{\bOmega}\frac{|b_\gamma-1|^2}{\Phi''(\rho_{\mu,\ep})}\,dx
\le o(1)\int_{\{t\ge\eta_\gamma\}}t^2\,dx
+C\int_{\{t<\eta_\gamma\}}t(x)^{2-|e(\gamma)|}\,dx
=o(1)+O\big(\eta_\gamma^{\,3-|e(\gamma)|}\big).
$$
The convergence $p_\gamma\to p$ in $X_1$ is similar: on
$\{t\ge\eta_\gamma\}$, $m_\gamma=a_\gamma^3/b_\gamma\to1$ uniformly by
\eqref{claimb}, and
$\p_z\rho^{\gamma}_{\mu,\ep}\circ\phi_\gamma
=q'_\gamma(u^\gamma\circ\phi_\gamma)\,\p_z u^\gamma\circ\phi_\gamma
\longrightarrow q'_{4/3}(\bar u)\,\p_z\bar u=\p_z\rho_{\mu,\ep}$
uniformly by the $C^1$ convergence of the extended enthalpies (Lemma
\ref{lem:gammacont}); on $\{t<\eta_\gamma\}$ one has
$|p_\gamma|\le C\,t^{2-2|e(\gamma)|}$ and $|\p_z\rho_{\mu,\ep}|\le Ct^2$ by
\eqref{rhocompare}, so the layer contributes
$\int_{\{t<\eta_\gamma\}}t^{-2}\cdot t^{4-4|e(\gamma)|}\,dx
=O\big(\eta_\gamma^{\,3-4|e(\gamma)|}\big)$ to $\|p_\gamma-p\|_{X_1}^2$.
\end{proof}

\begin{proposition}[Convergence of the pulled-back quadratic forms]
\label{prop:Kconv}
In the setting of Lemma \ref{lem:transport}, there exists
$\omega(\gamma)\to0$ as $\gamma\to4/3$ such that for every
$\sigma\in R(B_1^{\gamma})$ and every $\tau'\in R(B_1^{\mu,\ep})$, the element
$\tau:=\mathcal U_\gamma\sigma$ satisfies
\begin{equation}
\label{Kconv}
\big|\ip{K^{\gamma}_{\mu,\ep}\sigma,\sigma}-\ip{K_{\mu,\ep}\tau',\tau'}\big|
\le C\,\|\tau-\tau'\|_{X_1}\big(\|\tau\|_{X_1}+\|\tau'\|_{X_1}\big)
+\omega(\gamma)\big(\|\tau\|_{X_1}^2+\|\tau'\|_{X_1}^2\big).
\end{equation}
\end{proposition}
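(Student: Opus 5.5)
The plan is to split $K^{\gamma}_{\mu,\ep}=L^{\gamma}_{\mu,\ep}+8\pi\ep^2T^{\gamma}_{\mu,\ep}$ and compare each piece, pulled back by $\mathcal U_\gamma$, with the corresponding piece of $K_{\mu,\ep}$ evaluated at the same $\tau$. Afterwards we pass from $\tau$ to $\tau'$ using boundedness of $K_{\mu,\ep}$ on $X_1$. Concretely, we write
$$
\ip{K^{\gamma}_{\mu,\ep}\sigma,\sigma}-\ip{K_{\mu,\ep}\tau',\tau'}
=\big(\ip{K^{\gamma}_{\mu,\ep}\sigma,\sigma}-\ip{K_{\mu,\ep}\tau,\tau}\big)
+\big(\ip{K_{\mu,\ep}\tau,\tau}-\ip{K_{\mu,\ep}\tau',\tau'}\big).
$$
The second bracket equals $\ip{K_{\mu,\ep}(\tau-\tau'),\tau+\tau'}$ and is bounded by $\|K_{\mu,\ep}\|\,\|\tau-\tau'\|_{X_1}(\|\tau\|_{X_1}+\|\tau'\|_{X_1})$; this is the first term of \eqref{Kconv}. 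Note that $\tau$ itself need not lie in $R(B_1^{\mu,\ep})$, but the comparison at fixed $\tau$ does not need the constraint. It therefore suffices to show $|\ip{K^{\gamma}_{\mu,\ep}\sigma,\sigma}-\ip{K_{\mu,\ep}\tau,\tau}|\le\omega(\gamma)\|\tau\|_{X_1}^2$.

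\emph{Potential part.} By \eqref{formpull}, the $\|\tau\|_{X_1}^2$ terms cancel exactly. What remains is the difference of the integral operators with kernels $k_\gamma(x,x')$ and $|x-x'|^{-1}$, viewed as operators on $X_1$. Equivalently, we consider the operators on $L^2(\bOmega)$ with kernels $\Phi''(\rho_{\mu,\ep})^{-1/2}(x)\,k(x,x')\,\Phi''(\rho_{\mu,\ep})^{-1/2}(x')$, and we bound their difference in Hilbert--Schmidt norm. Since $\Phi''(\rho_{\mu,\ep})^{-1}\approx t^2$, the weight is harmless. We split $\bOmega\times\bOmega$ into the region where both points satisfy $t\ge\eta_\gamma$ and its complement:
\begin{itemize}
\item On the first region, \eqref{claimb} and \eqref{philip} give $k_\gamma=(1+o(1))|x-x'|^{-1}$ uniformly, so the contribution is $o(1)$ times the Hilbert--Schmidt norm of $t(x)t(x')|x-x'|^{-1}$, which is finite in $\R^3$.
\item On the complement, we use the crude bound $k_\gamma\le C\,t(x)^{-|e|/2}t(x')^{-|e|/2}|x-x'|^{-1}$. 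The region has measure $O(\eta_\gamma)$ in at least one variable, and the weights $t^{1-|e|/2}$ remain square-integrable against $|x-x'|^{-2}$. This gives a factor $O(\eta_\gamma^{1/2})$ by Cauchy--Schwarz, using $\int_{\bOmega}|x-x'|^{-2}dx'\le C$.
\end{itemize}

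\emph{Rotational part.} Here we need $F^\gamma_\sigma$ and the weight $\big(r\int\rho^{\gamma}_{\mu,\ep}dz\big)^{-1}$ on $[0,R^\gamma_{\mu,\ep}]$. The map $\phi_\gamma$ is radial in spherical variables, not cylindrical, so $F^\gamma_\sigma$ does not transform exactly. To avoid this, I would write $F_\sigma(r)=\int_{\bOmega^\gamma}\mathbf 1_{\{r(y)<r\}}\sigma\,dy$ and pull it back as $\ip{\tau,\mathbf 1_{\{r(\phi_\gamma x)<r\}}\ell_\gamma}_{X_1}$. This differs from $F_\tau(r)=\ip{\tau,\mathbf 1_{\{r(x)<r\}}\ell}_{X_1}$ by at most $\|\tau\|_{X_1}$ times the $X_1$-norm of the difference of the test functions. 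That norm is small uniformly in $r$: one part is controlled by $\ell_\gamma\to\ell$ (Lemma \ref{lem:transport}(iv)), and the other by the measure of the symmetric difference of the cylinders, which is $O(|\gamma-4/3|)$.

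The main obstacle is the singular weight near $r=R_{\mu,\ep}$ (the equator), where $\int\rho\,dz$ vanishes like a power of $R-r$ and the endpoints $R^\gamma\neq R$ differ. I would handle it using the mass constraints $\int\sigma=0$, which give $F^\gamma_\sigma(R^\gamma)=0$, together with Cauchy--Schwarz near the boundary: $|F_\sigma(r)|\le\|\tau\|_{X_1}\big(\int_{r(x)>r}\ell_\gamma^2\Phi''\,dx\big)^{1/2}\lesssim\|\tau\|(R-r)^{\alpha}$, with $\alpha$ large enough to beat the weight $\approx(R-r)^{-7/2}$. This lets us cut off an $\eta_\gamma$-layer at cost $o(1)\|\tau\|^2$, while on the interior the weights converge uniformly by Lemma \ref{lem:gammacont}(i). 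The exponent bookkeeping in this boundary layer is where I expect difficulty. If it fails, I would instead note that only the inequality direction matters for the downstream application, and drop the positive $T$-term in the layer.
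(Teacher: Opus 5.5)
\paragraph{Verdict: gap in the opening reduction (rotation term).} Your treatment of the multiplication and gravitational parts is essentially the paper's: a Hilbert--Schmidt bound for the kernel difference, split at $t=\eta_\gamma$. Your pulled-back test functions $\mathbf 1_{\{r_c(\phi_\gamma x)<r\}}\ell_\gamma$ are, up to normalization, the paper's $h^\gamma_r$. The gap is your opening reduction, which fails for the rotation term.

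You route the comparison through $\ip{K_{\mu,\ep}\tau,\tau}$ with $\tau=\mathcal U_\gamma\sigma$. You then bound $\ip{K_{\mu,\ep}(\tau-\tau'),\tau+\tau'}$ by $\|K_{\mu,\ep}\|_{X_1\to X_1^*}$, claiming the comparison at fixed $\tau$ needs no constraint. But $T_{\mu,\ep}$ is not bounded on all of $X_1$; the remark in Section 3 that it is a bounded form is only valid on $R(B_1^{\mu,\ep})$. Take any $\delta\rho$ with $\int_{\bOmega}\delta\rho\,dx\neq0$, e.g.\ $\delta\rho=\rho_{\mu,\ep}\in X_1$:
\begin{itemize}
\item $F_{\delta\rho}(r)\to\frac1{2\pi}\int\delta\rho\,dx\neq0$ as $r\to R_{\mu,\ep}$;
\item $r\int_{\R}\rho_{\mu,\ep}\,dz\approx(R_{\mu,\ep}-r)^{7/2}$ by \eqref{barrho};
\item hence $\ip{T_{\mu,\ep}\delta\rho,\delta\rho}=+\infty$.
\end{itemize}
Your $\tau$ is generically of this type. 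Lemma \ref{lem:transport}(iv) only gives $\ip{\tau,\ell_\gamma}_{X_1}=0$, so $\int\tau\,dx=\ip{\tau,\ell-\ell_\gamma}_{X_1}$ need not vanish. Consequently $\ip{K_{\mu,\ep}\tau,\tau}=+\infty$. Your reduced claim $|\ip{K^\gamma_{\mu,\ep}\sigma,\sigma}-\ip{K_{\mu,\ep}\tau,\tau}|\le\omega(\gamma)\|\tau\|_{X_1}^2$ is therefore false, and the second bracket is controlled by an infinite norm. Your endpoint argument invokes only $\int\sigma=0$. That controls $F^\gamma$ at $R^\gamma$ but says nothing about $F_\tau$ at $R_{\mu,\ep}$, which is exactly where the problem lies.

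\paragraph{How the paper avoids it, and a repair of your route.} The paper never evaluates $T_{\mu,\ep}$ off the mass-preserving subspace. It compares the bounded parts as you do. For the rotation part, it compares $\ip{\widehat T^\gamma\tau,\tau}$ directly with $\ip{T_{\mu,\ep}\tau',\tau'}$:
\begin{itemize}
\item it cuts off the endpoint layers and estimates each with its own constraint: $F^\gamma_\tau(R^\gamma(0))=0$ from $\sigma\in R(B_1^\gamma)$, and $F_{\tau'}(R(0))=0$ from $\tau'\in R(B_1^{\mu,\ep})$;
\item it compares $F^\gamma_\tau$ with $F_{\tau'}$ only on the interior interval.
\end{itemize}
That interior comparison is where the $C\|\tau-\tau'\|_{X_1}$ term comes from, and it is why the statement assumes $\tau'\in R(B_1^{\mu,\ep})$.

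You can also keep your two-step structure after replacing $\tau$ by
$\tilde\tau:=\tau-\ip{\tau,\ell}_{X_1}\ell/\|\ell\|_{X_1}^2\in R(B_1^{\mu,\ep})$. This satisfies $\|\tau-\tilde\tau\|_{X_1}\le\|\ell-\ell_\gamma\|_{X_1}\|\tau\|_{X_1}/\|\ell\|_{X_1}=o(1)\|\tau\|_{X_1}$, and you then use boundedness of $T_{\mu,\ep}$ on $R(B_1^{\mu,\ep})$.

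\paragraph{The exponent bookkeeping closes.} For a mass-preserving density, apply Cauchy--Schwarz over the cap $\{r_c>r\}$, which has volume $\sim(R-r)^{3/2}$ and weight $1/\Phi''\approx t^2\lesssim(R-r)^2$. This gives $|F(r)|\lesssim\|\cdot\|_{X_1}(R-r)^{7/4}$, which exactly balances the $(R-r)^{-7/2}$ weight. The same exponent count works for each $\gamma$. So the integrand is bounded and an $\eta$-layer costs $O(\eta)$. Your fallback of discarding the $T^\gamma$-layer is therefore unnecessary. It would in any case yield only a one-sided inequality, not \eqref{Kconv}.
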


\begin{proof}
By Lemma \ref{lem:transport}(ii) and
$K^{\gamma}_{\mu,\ep}=L^{\gamma}_{\mu,\ep}+8\pi\ep^2T^{\gamma}_{\mu,\ep}$, we
compare the three parts of
$$
\ip{K^{\gamma}_{\mu,\ep}\sigma,\sigma}
=\|\tau\|_{X_1}^2-4\pi\ip{N_\gamma\tau,\tau}
+8\pi\ep^2\ip{\widehat T^{\gamma}\tau,\tau},
$$
where $N_\gamma:X_1\to X_1^*$ is the integral operator with kernel
$k_\gamma$ of \eqref{formpull} and $\widehat T^{\gamma}$ is the pulled-back
rotation form \eqref{That} below, with the corresponding parts of
$\ip{K_{\mu,\ep}\tau',\tau'}=\|\tau'\|_{X_1}^2-4\pi\ip{N\tau',\tau'}
+8\pi\ep^2\ip{T_{\mu,\ep}\tau',\tau'}$, where $N$ is the Newton kernel
operator. The multiplication parts differ by at most
$\|\tau-\tau'\|_{X_1}(\|\tau\|_{X_1}+\|\tau'\|_{X_1})$.

\emph{Part 1: the gravitational term.} We claim
\begin{equation}
\label{HSconv}
\|N_\gamma-N\|_{X_1\to X_1^*}^2
\le\iint_{\bOmega\times\bOmega}
\frac{\big|k_\gamma(x,x')-|x-x'|^{-1}\big|^2}
{\Phi''(\rho_{\mu,\ep}(x))\,\Phi''(\rho_{\mu,\ep}(x'))}\,dx\,dx'
\longrightarrow0
\end{equation}
(the Hilbert--Schmidt norm on the right dominates the operator norm), from
which
$|\ip{N_\gamma\tau,\tau}-\ip{N\tau',\tau'}|
\le\|N_\gamma-N\|\,\|\tau\|^2+\|N\|\,\|\tau-\tau'\|(\|\tau\|+\|\tau'\|)$.
By \eqref{philip},
$|\phi_\gamma(x)-\phi_\gamma(x')|^{-1}
=(1+\theta_\gamma(x,x'))\,|x-x'|^{-1}$ with $\theta_\gamma\to0$ uniformly, so
with $w(x):=\Phi''(\rho_{\mu,\ep}(x))^{-1}\approx t(x)^2$ the integral in
\eqref{HSconv} equals
$$
\iint w(x)w(x')\,\frac{|b_\gamma(x)b_\gamma(x')(1+\theta_\gamma)-1|^2}
{|x-x'|^2}\,dx\,dx'.
$$
On $\{t(x)\ge\eta_\gamma,\ t(x')\ge\eta_\gamma\}$ the numerator factor is
$o(1)$ uniformly by \eqref{claimb}, and
$$
\sup_{x\in\bOmega}\int_{\bOmega}\frac{w(x')}{|x-x'|^2}\,dx'
\le\sup_{x}\int_{\bOmega}\frac{2t(x)^2+2|x-x'|^2}{|x-x'|^2}\,dx'<\infty,
$$
since $t(x')\le t(x)+|x-x'|$ and $|x-x'|^{-2}$ is integrable in three
dimensions; hence this region contributes $o(1)$. On the complementary region,
say $t(x)<\eta_\gamma$, \eqref{claimb} gives
$|b_\gamma(x)b_\gamma(x')(1+\theta_\gamma)-1|
\le C\big(t(x)t(x')\big)^{-|e(\gamma)|/2}$, so its contribution is at most
$$
C\int_{\{t(x)<\eta_\gamma\}}t(x)^{2-|e(\gamma)|}\,dx\;\cdot\;
\sup_{x}\int_{\bOmega}\frac{t(x')^{2-|e(\gamma)|}}{|x-x'|^2}\,dx'
=O\big(\eta_\gamma^{\,3-|e(\gamma)|}\big),
$$
and \eqref{HSconv} follows.

\emph{Part 2: the rotation term.} With $r_c(y):=\sqrt{y_1^2+y_2^2}$ and
$g_r^\gamma(y):=r_c(y)\mathbf 1_{\{r_c(y)<r\}}$, the definition \eqref{defT}
pulled back by $\mathcal U_\gamma$ reads
\begin{equation}
\label{That}
\ip{\widehat T^{\gamma}\tau,\tau}
=\int_0^{R^\gamma(0)}\frac{F^\gamma_\tau(r)^2}
{r\,\bar\rho^{\gamma}(r)}\,dr,\qquad
F^\gamma_\tau(r):=\frac1{2\pi}\ip{\tau,h^\gamma_r}_{X_1},\quad
h^\gamma_r:=\frac{b_\gamma\,(g_r^\gamma\circ\phi_\gamma)}
{\Phi''(\rho_{\mu,\ep})},\quad
\bar\rho^{\gamma}(r):=\int_{\R}\rho^{\gamma}_{\mu,\ep}(r,z)\,dz,
\end{equation}
where $F^\gamma_\tau(r)$ is the pulled-back cumulative marginal
$F^{\gamma}_{\sigma}(r)$ of \eqref{defF}; the limit form
$\ip{T_{\mu,\ep}\tau',\tau'}$ has the same shape with
$h_r:=g_r/\Phi''(\rho_{\mu,\ep})$, $g_r(x):=r_c(x)\mathbf1_{\{r_c(x)<r\}}$,
and $\bar\rho(r):=\int_{\R}\rho_{\mu,\ep}(r,z)\,dz$. We use the following
ingredients, all uniform in $r$ and in $\gamma$:
\begin{itemize}
\item[(a)] $\sup_{r}\|h^\gamma_r-h_r\|_{X_1}\to0$: indeed
$\|h^\gamma_r-h_r\|_{X_1}^2
=\int_{\bOmega}|b_\gamma\,g_r^\gamma\circ\phi_\gamma-g_r|^2
\Phi''(\rho_{\mu,\ep})^{-1}dx$; on $\{t\ge\eta_\gamma\}$,
$b_\gamma\to1$ uniformly by \eqref{claimb} and
$g_r^\gamma\circ\phi_\gamma\to g_r$ boundedly, pointwise except on the annulus
$\{x:|r_c(x)-r|\le C|\gamma-4/3|\}$ of volume $O(|\gamma-4/3|)$; on
$\{t<\eta_\gamma\}$ the integral is
$O\big(\int_{\{t<\eta_\gamma\}}t^{2-|e(\gamma)|}dx\big)
=O(\eta_\gamma^{3-|e(\gamma)|})$ by \eqref{claimb}.
\item[(b)] $\bar\rho^{\gamma}(r)\to\bar\rho(r)$ uniformly in $r$ (Lemma
\ref{lem:gammacont}(i)), $R^\gamma(0)\to R(0)$, $\bar\rho(r)\ge c(\eta)>0$
for $r\le R(0)-2\eta$, and, by \eqref{rhocompare} and the boundary
geometry near the equator,
\begin{equation}
\label{barrho}
\bar\rho(r)\approx(R(0)-r)^{7/2},\qquad
\bar\rho^{\gamma}(r)\approx\big(R^\gamma(0)-r\big)^{7/2-e(\gamma)}
\end{equation}
near the respective equatorial radii.
\item[(c)] Endpoint bounds: since both $\sigma$ and $\tau'$ are
mass-preserving,
$F^{\gamma}_{\tau}(R^\gamma(0))=(2\pi)^{-1}\int_{\bOmega^\gamma}\sigma\,dy=0$
and $F_{\tau'}(R(0))=(2\pi)^{-1}\int_{\bOmega}\tau'\,dx=0$; hence, writing
$F^{\gamma}_{\tau}(r)=-\int_r^{R^\gamma(0)}s\big(\int\sigma(s,z)\,dz\big)ds$
and using \eqref{rhocompare},
\begin{equation}
\label{Fbound}
|F^{\gamma}_{\tau}(r)|
\le C\|\tau\|_{X_1}\,\big(R^\gamma(0)-r\big)^{9/4-e(\gamma)/2},
\qquad
|F_{\tau'}(r)|\le C\|\tau'\|_{X_1}\,(R(0)-r)^{7/4},
\end{equation}
while near $r=0$ one has $|F^{\gamma}_{\tau}(r)|
\le C\|\tau\|_{X_1}r^{3/2}$, $|F_{\tau'}(r)|\le C\|\tau'\|_{X_1}r^{3/2}$ and
$\bar\rho^{\gamma}(0)\to\bar\rho(0)>0$.
\end{itemize}
Split the $r$-integrals at $r=\eta_\gamma$ and at distance $2\eta_\gamma$
from the respective endpoints. The endpoint layers contribute
$O\big(\eta_\gamma^{\,1-2|e(\gamma)|}\big)$ to
$\ip{\widehat T^{\gamma}\tau,\tau}$ and $O(\eta_\gamma)$ to
$\ip{T_{\mu,\ep}\tau',\tau'}$ (by \eqref{barrho}--\eqref{Fbound}), and the
origin layers $O(\eta_\gamma^3)$. On the main interval
$[\eta_\gamma,\min(R(0),R^\gamma(0))-2\eta_\gamma]$, the weights converge
uniformly by (b), and by (a),
$$
\sup_r|F^{\gamma}_{\tau}(r)-F_{\tau'}(r)|
\le C\|\tau-\tau'\|_{X_1}
+\|\tau\|_{X_1}\sup_r\|h^\gamma_r-h_r\|_{X_1}.
$$
Combining everything gives
$|\ip{\widehat T^{\gamma}\tau,\tau}-\ip{T_{\mu,\ep}\tau',\tau'}|
\le C\|\tau-\tau'\|_{X_1}(\|\tau\|_{X_1}+\|\tau'\|_{X_1})
+\omega(\gamma)(\|\tau\|_{X_1}^2+\|\tau'\|_{X_1}^2)$, and \eqref{Kconv}
follows.
\end{proof}

\begin{theorem}[Stability for $\gamma$ near $4/3$; Theorem \ref{thm:main}]
\label{thm:gamstab}
Fix $\mu\in[\mu_0,\mu_1]$ and $\ep\in(0,\ep_0)$. Then there exists
$\ep_1=\ep_1(\mu_0,\mu_1,\ep)>0$ such that for every
$\gamma\in(4/3-\ep_1,4/3]$, the rotating polytropic star solution
$(\rho^{\gamma}_{\mu,\ep},\ep r\mathbf{e}_\theta)$ of \eqref{SteadyGamma} is
\emph{spectrally stable} with respect to axi-symmetric perturbations: the
corresponding linearized operator
$\mathbf{J}^{\gamma}_{\mu,\ep}\mathbf{L}^{\gamma}_{\mu,\ep}$ (defined as in
\eqref{defJL} with $\rho^{\gamma}_{\mu,\ep}$ in place of $\rho_{\mu,\ep}$) satisfies
$$
\sigma\big(\mathbf{J}^{\gamma}_{\mu,\ep}\mathbf{L}^{\gamma}_{\mu,\ep}\big)\subset i\R
\qquad\text{and}\qquad
\left|e^{t\mathbf{J}^{\gamma}_{\mu,\ep}\mathbf{L}^{\gamma}_{\mu,\ep}}\right|
\le C(1+|t|)^3\quad\forall t\in\R,
$$
for some $C>0$ independent of $\gamma\in(4/3-\ep_1,4/3]$, $\mu\in[\mu_0,\mu_1]$.
\end{theorem}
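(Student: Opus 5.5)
The plan is to reduce the claim, via Corollary \ref{cor:criterion} applied to $\rho^{\gamma}_{\mu,\ep}$, to the nonnegativity $\ip{K^{\gamma}_{\mu,\ep}\sigma,\sigma}\ge0$ for all $\sigma\in R(B_1^{\gamma})$. The structural hypotheses of Theorem \ref{thm:criterion} hold for $\gamma$ close to $4/3$ by Lemma \ref{lem:gammacont}: the boundary is $C^1$ (upgraded to $C^2$ with positive curvature near the equator through the $C^1$ convergence of the enthalpies and the implicit function theorem), the physical vacuum condition holds, and $\Upsilon\equiv4\ep^2>0$. The case $\gamma=4/3$ is Section \ref{sec:proof}, so only $\gamma<4/3$ remains.

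\emph{Step 1 (contradiction on the pulled-back forms).} Suppose there are $\gamma_n\to4/3$ and $\sigma_n\in R(B_1^{\gamma_n})$ with $\ip{K^{\gamma_n}_{\mu,\ep}\sigma_n,\sigma_n}<0$. Set $\tau_n:=\mathcal U_{\gamma_n}\sigma_n\in X_1$ and normalize $\|\tau_n\|_{X_1}=1$. Because of the translation symmetry, $\p_z\rho^{\gamma}_{\mu,\ep}$ is odd and lies in the kernel of $K^\gamma$. Moreover $K^\gamma$ respects the even/odd splitting, since $T^\gamma$ vanishes on odd functions and $L^\gamma$ commutes with $z\mapsto -z$. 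Hence I may subtract the $X^{\gamma}_1$-projection onto $\p_z\rho^{\gamma}_{\mu,\ep}$ without changing the form, and so assume $\ip{\tau_n,p_{\gamma_n}}_{X_1}=0$ and $\ip{\tau_n,\ell_{\gamma_n}}_{X_1}=0$ by Lemma \ref{lem:transport}(iv).

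\emph{Step 2 (projection onto $\widetilde R^{\mu,\ep}$).} Let $\tau_n'$ be the $X_1$-orthogonal projection of $\tau_n$ onto $\widetilde R^{\mu,\ep}=\{\ell,p\}^{\perp}$. Since $\ell_{\gamma_n}\to\ell$ and $p_{\gamma_n}\to p$ in $X_1$, and $\ell$, $p$ are linearly independent (one even, one odd), I get $\|\tau_n-\tau_n'\|_{X_1}\le C(\|\ell_{\gamma_n}-\ell\|+\|p_{\gamma_n}-p\|)\to0$. Proposition \ref{prop:Kconv} then gives
$$
\ip{K^{\gamma_n}_{\mu,\ep}\sigma_n,\sigma_n}\ge\ip{K_{\mu,\ep}\tau_n',\tau_n'}-C\|\tau_n-\tau_n'\|_{X_1}-2\omega(\gamma_n).
$$
Theorem \ref{thm:strict}(ii) gives $\ip{K_{\mu,\ep}\tau_n',\tau_n'}\ge\delta'\|\tau_n'\|^2_{X_1}\ge\delta'/2$ for large $n$. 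The right-hand side is therefore eventually positive, which is a contradiction. This yields $\ep_1$ for each fixed $\mu$.

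\emph{Step 3 (uniformity and the bound).} For uniformity in $\mu\in[\mu_0,\mu_1]$ I would run the same contradiction argument with $\mu_n$ varying, extracting $\mu_n\to\mu_*$ by compactness. This needs $\delta'$ to be uniform, which Theorem \ref{thm:strict} provides, and the estimates in Lemma \ref{lem:transport} and Proposition \ref{prop:Kconv} to be uniform in $\mu$. The latter holds because the implicit function theorem in Lemma \ref{lem:gammacont} applies jointly in $(\mu,\gamma)$ with continuous dependence. With $n^-=0$, Corollary \ref{cor:criterion} gives $\sigma\subset i\R$ and the $(1+|t|)^3$ bound. For the uniformity of $C$, I would argue that the constant in \cite[Theorem 1.1]{LW2023} depends only on the positivity gap of $K$ on the reduced space and on bounds for the coefficients, and these stay uniform along the family.

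\textbf{Main obstacle.} I expect the hardest part to be this last point, the uniform constant in the semigroup bound, together with verifying the $C^2$/positive-curvature hypotheses for $\gamma\ne4/3$, since Lemma \ref{lem:gammacont} only delivers $C^1$ boundaries. I would first try to obtain the $C^2$ regularity near the equator from elliptic regularity of $u^\gamma=\frac12\ep^2r^2+K(\rho^\gamma)-c$, noting that $K(\rho^\gamma)\in C^{2,\alpha}$ because $\rho^\gamma$ is Hölder continuous.
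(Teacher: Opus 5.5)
Your proposal is correct and follows essentially the paper's route. You reduce to $n^-(K^{\gamma}_{\mu,\ep}|_{R(B_1^{\gamma})})=0$ via the Lin--Wang criterion, remove the mass-free kernel direction $\p_z\rho^{\gamma}_{\mu,\ep}$, pull back by $\mathcal U_\gamma$, project onto $\widetilde R^{\mu,\ep}$ using $\ell_\gamma\to\ell$ and $p_\gamma\to p$, and then combine Proposition \ref{prop:Kconv} with Theorem \ref{thm:strict}(ii). The contradiction framing and the explicit Gram-matrix bound on $\|\tau-\tau'\|_{X_1}$ are cosmetic variants of the paper's direct derivation of \eqref{strictposgamma}; the paper simply asserts the points you flag (the $C^2$/positive-curvature hypothesis and the uniformity of $C$ and of $\ep_1$ in $\mu$) in its Step 4, so your elliptic-regularity and compactness remarks are, if anything, more careful.
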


\begin{proof}
Let $L^{\gamma}_{\mu,\ep}$, $T^{\gamma}_{\mu,\ep}$, $K^{\gamma}_{\mu,\ep}$ be
the operators of \eqref{defL}, \eqref{defT}, \eqref{defK2} with
$\rho^{\gamma}_{\mu,\ep}$ in place of $\rho_{\mu,\ep}$ (in particular
$K^{\gamma}_{\mu,\ep}=L^{\gamma}_{\mu,\ep}+8\pi\ep^2T^{\gamma}_{\mu,\ep}$ and
$\Upsilon\equiv4\ep^2>0$), and let $\mathcal U_\gamma$ be the transport of
Lemma \ref{lem:transport}.

\emph{Step 1 (convergence of the constraint subspaces).} Set
$\widetilde R^{\gamma}:=\{\delta\rho\in R(B_1^{\gamma})\,:\,
\ip{\delta\rho,\p_z\rho^{\gamma}_{\mu,\ep}}_{X^{\gamma}_1}=0\}$. By Lemma
\ref{lem:transport}(iv), the pulled-back constraint subspaces
\begin{equation}
\label{defRhat}
\widehat{\mathcal R}^{\gamma}:=\mathcal U_\gamma\widetilde R^{\gamma}
=\{\tau\in X^{\mu,\ep}_1\,:\,\ip{\tau,\ell_\gamma}_{X_1}=0,\
\ip{\tau,p_\gamma}_{X_1}=0\}
\end{equation}
converge to
$\widetilde R^{\mu,\ep}=\{\tau\in X^{\mu,\ep}_1\,:\,\ip{\tau,\ell}_{X_1}=0,
\ip{\tau,p}_{X_1}=0\}$ in the gap topology, since $\ell_\gamma\to\ell$ and
$p_\gamma\to p$ in $X^{\mu,\ep}_1$. Hence every
$\tau\in\widehat{\mathcal R}^{\gamma}$ admits a projection
$\tau'\in\widetilde R^{\mu,\ep}$ with
\begin{equation}
\label{projest}
\|\tau-\tau'\|_{X^{\mu,\ep}_1}\le c(\gamma)\longrightarrow0
\qquad\text{uniformly in }\tau.
\end{equation}

\emph{Step 2 (uniform positivity away from the translation mode).} Let
$\sigma\in\widetilde R^{\gamma}$ with $\|\sigma\|_{X^{\gamma}_1}=1$, set
$\tau:=\mathcal U_\gamma\sigma$, and choose
$\tau'\in\widetilde R^{\mu,\ep}$ as in Step 1. Since $\sigma$ and $\tau'$ are
mass-preserving, Proposition \ref{prop:Kconv} applies; together with Theorem
\ref{thm:strict}(ii) this gives
$$
\ip{K^{\gamma}_{\mu,\ep}\sigma,\sigma}
\ge\ip{K_{\mu,\ep}\tau',\tau'}-C\|\tau-\tau'\|_{X_1}-\omega(\gamma)
\ge\delta'\|\tau'\|_{X_1}^2-o(1)\ge\frac{\delta'}{2},
$$
for all $\gamma$ sufficiently close to $4/3$, i.e.
\begin{equation}
\label{strictposgamma}
\ip{K^{\gamma}_{\mu,\ep}\delta\rho,\delta\rho}
\ge\frac{\delta'}{2}\,\|\delta\rho\|_{X^{\gamma}_1}^2
\qquad\forall\,\delta\rho\in\widetilde R^{\gamma}.
\end{equation}

\emph{Step 3 (nonnegativity on the mass-preserving subspace).} For $\gamma$ close
to $4/3$, the translation mode $\partial_z\rho^{\gamma}_{\mu,\ep}$ is an even-in-$z$
odd function with $\int\partial_z\rho^{\gamma}_{\mu,\ep}\,dx=0$, so
$\partial_z\rho^{\gamma}_{\mu,\ep}\in R(B_1^{\gamma})$, and
$L^{\gamma}_{\mu,\ep}\partial_z\rho^{\gamma}_{\mu,\ep}=0$ by the $z$-symmetry, hence
$K^{\gamma}_{\mu,\ep}\partial_z\rho^{\gamma}_{\mu,\ep}=0$ (as
$K^{\gamma}_{\mu,\ep}=L^{\gamma}_{\mu,\ep}$ on odd functions). Every
$\delta\rho\in R(B_1^{\gamma})$ decomposes as
$\delta\rho=b\,\partial_z\rho^{\gamma}_{\mu,\ep}+\delta\rho'$ with
$\delta\rho'\in\widetilde R^{\gamma}$, and by self-duality
$\ip{K^{\gamma}_{\mu,\ep}\partial_z\rho^{\gamma}_{\mu,\ep},\delta\rho'}=0$, so
$$
\ip{K^{\gamma}_{\mu,\ep}\delta\rho,\delta\rho}
=\ip{K^{\gamma}_{\mu,\ep}\delta\rho',\delta\rho'}
\ge\frac{\delta'}{2}\|\delta\rho'\|_{X^{\gamma}_1}^2\ge 0.
$$
Thus $K^{\gamma}_{\mu,\ep}|_{R(B_1^{\gamma})}\ge 0$, i.e.
$n^-(K^{\gamma}_{\mu,\ep}|_{R(B_1^{\gamma})})=0$.

\emph{Step 4 (spectral stability).} For $|\gamma-4/3|$ small, the hypotheses of
Theorem \ref{thm:criterion} are satisfied by the admissible solutions of
Lemma \ref{lem:gammacont} (the boundary $\partial\bOmega^{\gamma}_{\mu,\ep}$ is
$C^2$ with positive curvature near the equator and
$\rho^{\gamma}_{\mu,\ep}\approx\operatorname{dist}(\cdot,\partial
\bOmega^{\gamma}_{\mu,\ep})^{1/(\gamma-1)}$ by admissibility, and
$\Upsilon\equiv4\ep^2>0$). Hence, by Theorem \ref{thm:criterion} and Step 3,
$$
\dim E^u=\dim E^s=n^-(K^{\gamma}_{\mu,\ep}|_{R(B_1^{\gamma})})=0,
$$
so $\mathbf{X}^{\gamma}=E^c$, $\sigma(\mathbf{J}^{\gamma}\mathbf{L}^{\gamma})
\subset i\R$, and $\left|e^{t\mathbf{J}^{\gamma}\mathbf{L}^{\gamma}}\right|
\le M(1+|t|)^3$ for all $t\in\R$. Here $M$ is uniform in
$\gamma\in(4/3-\ep_1,4/3]$, $\mu\in[\mu_0,\mu_1]$: the structural constants
entering Theorem \ref{thm:criterion} (the admissibility bounds of Definition
\ref{def:admissible}, the $C^2$ and curvature bounds of
$\p\bOmega^{\gamma}_{\mu,\ep}$, and the physical vacuum slope) are uniform in
$\gamma$ by the $C^1$ convergence of the extended enthalpies in Lemma
\ref{lem:gammacont}, and the spectral gap is uniform by Proposition
\ref{prop:decomp} and \eqref{strictposgamma}.
\end{proof}

\begin{remark}[Relation with the instability for fixed $\gamma<4/3$]
\label{rem:limits}
The order of the limits in Theorem \ref{thm:gamstab} is essential: we fix the small
angular velocity $\ep>0$ and let $\gamma\uparrow4/3$. By \cite[Theorem 3.2]{LW2023},
if instead one fixes $\gamma<4/3$ and lets $\ep\to0$, the rotating stars are
unstable, since the non-rotating polytropic stars are unstable for $\gamma<4/3$.
The two limits do not commute: for each fixed $\gamma<4/3$ the instability persists
only for $\ep<\ep_0(\gamma)$, with $\ep_0(\gamma)\to0$ as $\gamma\uparrow4/3$.
Equivalently, for the fixed small $\ep>0$ of Theorem \ref{thm:gamstab}, the
instability mechanism of order $|\gamma-4/3|$ (which vanishes at $\gamma=4/3$) is
dominated by the stabilizing rotational term of order $\ep^2$ when
$|\gamma-4/3|$ is smaller than $\ep^2$ up to a constant. Theorem
\ref{thm:gamstab} makes this rigorous through the uniform convergence
\eqref{Kconv} and the strict positivity \eqref{strictposgamma}.
\end{remark}

\section{Discussion}
\label{sec:discussion}

\subsection{The cases $\gamma\ne 4/3$}
\label{sec:gammaneq}

The critical exponent $\gamma=4/3$ is the borderline for the stability of polytropic
stars, and rotation does not change the picture away from the critical value.

\begin{itemize}
\item \emph{$\gamma>4/3$:} the non-rotating stars are stable. By
\cite[Theorem 3.3]{LW2023}, the slowly rotating stars
$(\rho_{\mu,\ep},\ep r\mathbf{e}_\theta)$ are spectrally stable for
$\ep$ small. 

\item \emph{$\gamma<4/3$:} the non-rotating stars are unstable. By
\cite[Theorem 3.2]{LW2023}, for any Rayleigh stable angular velocity profile the
rotating stars $(\rho_{\mu,\ep},\ep r\mathbf{e}_\theta)$ remain unstable for $\ep>0$
small (the instability of the non-rotating star is stable under the small
perturbation of rotation). Hence rotation \emph{cannot} stabilize the subcritical
polytropes. In particular, Theorem \ref{thm:main} is sharp in the following sense:
for fixed $\gamma<4/3$ and $\ep\to0$ rotation cannot stabilize, whereas for fixed
$\ep>0$ and $\gamma\uparrow4/3$ it does (Theorem \ref{thm:main}, cf. Remark
\ref{rem:limits}); $\gamma=4/3$ is exactly the critical value where the (marginal)
stability of the non-rotating star is converted into strict stability by rotation.
\end{itemize}

\subsection{Relation to the uniqueness theorem of \cite{W2024}}
\label{sec:uniqueness}

In \cite{W2024} (see also \cite{JM2019}) it was proved that for the uniformly rotating
supermassive stars ($\gamma=4/3$) with fixed small angular velocity $\ep>0$, the total
mass satisfies
$$
\partial_\mu M_{\mu,\ep}<0,\qquad \partial_{\ep^2}M_{\mu,\ep}>0,
\qquad M_{\mu,\ep}>M_c:=M_{\mu,0}
$$
for $\ep$ small, where $M_c$ is the Chandrasekhar limit. In particular, for every pair
$(M,\ep)$ with $\ep>0$ small and $M$ in a right neighborhood of $M_c$, there exists a
\emph{unique} central density $\mu=\mu(M,\ep)$ such that
$M_{\mu(M,\ep),\ep}=M$. Thus the family of slowly uniformly rotating supermassive
stars is well-parametrized by the pair $(M,\ep)$, and by Theorem \ref{thm:main}
(with $\gamma=4/3$) every
member of this family is spectrally stable. The combination of the uniqueness theorem
of \cite{W2024} and the spectral stability of Theorem \ref{thm:main} provides a
complete dynamical picture of slowly uniformly rotating supermassive stars at the
linear level.

Note that $\partial_\mu M_{\mu,\ep}<0$ for the family with fixed $\ep$ means that the
turning point principle \emph{fails} for this family: stability holds although the
mass is strictly decreasing in the central density. This is consistent with, and in
fact an extreme manifestation of, the phenomenon discovered in \cite[Theorem 3.4]
{LW2023} for $\gamma>4/3$.

\subsection{The case of fixed angular momentum distribution}
\label{sec:fixedj}

It is instructive to compare with the family of slowly rotating stars with a
\emph{fixed angular momentum distribution} $j=j(m)$ (where $m=m_{\rho}(r)=
2\pi\int_0^r s\int_{\R}\rho(s,z)\,dz\,ds$ is the mass in the cylinder), studied in
\cite[Section 3.2]{LW2023}. For such a family, the steady equation is
\begin{equation}
\label{SteadyJ}
-\ep^2\int_0^r\frac{j^2(m_{\rho_{\mu,\ep}}(s))}{s^3}\,ds
+\Phi'(\rho_{\mu,\ep})+V_{\mu,\ep}+c_{\mu,\ep}=0,
\end{equation}
and the reduced operator takes the form
\begin{equation}
\label{defKJ}
\ip{K^J_{\mu,\ep}\delta\rho,\delta\rho}
=\ip{L_{\mu,\ep}\delta\rho,\delta\rho}
+2\pi\ep^2\int_0^{R_{\mu,\ep}}\frac{\partial_m(j^2)(m_{\rho_{\mu,\ep}}(r))}{r^3}
\left(\int_0^r s\int_{\R}\delta\rho(s,z)\,dz\,ds\right)^2 dr.
\end{equation}
For the fixed angular momentum family, the reduced operator is
$K^J_{\mu,\ep}=L_{\mu,\ep}+2\pi\ep^2 T^J_{\mu,\ep}$, where $T^J_{\mu,\ep}\ge 0$ is the
positive form
$\ip{T^J_{\mu,\ep}\delta\rho,\delta\rho}=\int_0^{R_{\mu,\ep}}
\frac{\partial_m(j^2)(m_{\rho_{\mu,\ep}}(r))}{r^3}F_{\delta\rho}(r)^2\,dr$.
If $j$ is strictly increasing (equivalently, the corresponding angular velocity is
Rayleigh stable, $\partial_m(j^2)>0$), then $K^J_{\mu,\ep}\ge L_{\mu,\ep}$.
For the uniform rotation profile, $j(m_{\rho_{\mu,\ep}}(r))=r$, one has
$\partial_m(j^2)(m)=\frac{1}{\pi\int_{\R}\rho_{\mu,\ep}(r,z)\,dz}$, so that
$2\pi\ep^2T^J_{\mu,\ep}=2\ep^2\int_0^{R_{\mu,\ep}}
\frac{F_{\delta\rho}(r)^2}{r^3\int_{\R}\rho_{\mu,\ep}(r,z)\,dz}\,dr$, which is the
fixed-angular-momentum version of the rotation term $8\pi\ep^2T_{\mu,\ep}$ in
\eqref{defK2} (the two forms agree on the subspace where the corresponding
reductions are equivalent; see also Remark \ref{rem:choice62}).
For the fixed angular momentum family one can construct a modified operator
$G^J_{\mu,\ep}$ (e.g. $G^J_{\mu,\ep}=L_{\mu,\ep}+\frac32\ep^2\pi T^J_{\mu,\ep}$)
such that $f_{\mu,\ep}$ is again a reduced kernel direction and
$K^J_{\mu,\ep}-G^J_{\mu,\ep}\ge 0$, so the same argument as in Section
\ref{sec:reduced} proves the spectral stability at $\gamma=4/3$ without any
condition on $\partial_\mu M_{\mu,\ep}$. We work in this paper directly with the
fixed-angular-velocity formulation \eqref{Steady}, which is the natural one for
Theorem \ref{thm:main} and for the uniqueness theory of \cite{W2024}.

\subsection{Open problems}
\label{sec:open}

We close with some remarks on possible extensions.
\begin{itemize}
\item \emph{Nonlinear stability.} The linear (spectral) stability established in
Theorem \ref{thm:main}, together with the polynomial growth bound
\eqref{estimate-center}, is the first step toward a nonlinear stability theory.
For non-rotating stars, nonlinear stability in the polytropic case was obtained by
\cite{LS2009,LS2011} via variational methods; the extension to slowly rotating
supermassive stars requires a control of the Casimir invariants (generalized angular
momentum, cf. \cite[Section 2.3]{LW2023}) which is beyond the scope of the present
paper.
\item \emph{The dependence on the angular velocity profile.} Theorem \ref{thm:main}
holds for uniform rotation, for which $\Upsilon\equiv 4\ep^2>0$. For a general
Rayleigh stable profile $\omega_0(r)$, the same argument applies verbatim as long as
$\Upsilon(r)>0$; the positivity of the Rayleigh discriminant is exactly the
mechanism that enforces $K_{\mu,\ep}\ge L_{\mu,\ep}$ and hence the stability.
\item \emph{Rapid rotation.} For rapidly rotating supermassive stars (whose
existence for $\gamma=4/3$ was established by variational methods in
\cite{LS2008}), the stability problem is open; the rigid rotation may reach
bifurcation points (e.g. the Maclaurin-like sequences) where the linearized operator
develops additional neutral modes.
\end{itemize}

\section*{Acknowledgements}
Yucong Wang was supported in part by the National
Natural Science Foundation of China (No.~12501290), the Furong Young Talents
Program for Science and Technology Innovation of Hunan Province
(No.~2026RC3168), the Natural Science Foundation of Hunan Province
(No.~2025JJ60068), the Scientific Research Fund of the Hunan Provincial
Education Department (No.~25B0150), the 111 Project (No.~D23017), and the
Program for Science and Technology Innovative Research Teams in Higher
Educational Institutions of Hunan Province, China.

Moonshot AI's Kimi-K3 was used during revision to assist with language editing,
\LaTeX{} reorganization, literature checks, and mathematical consistency
checks.  The authors assume responsibility for all content.

\bibliographystyle{plain}

\end{document}